\newcommand{\bbF}{\mathbb{F}}
\newcommand{\bbQ}{\mathbb{Q}}
\newcommand{\bbZ}{\mathbb{Z}}
\renewcommand{\phi}{\varphi}
\newtheorem{theorem}{Theorem}[section]
\newtheorem{proposition}[theorem]{Proposition}
\newtheorem{lemma}[theorem]{Lemma}
\newtheorem*{lemma*}{Lemma}
\newtheorem{corollary}[theorem]{Corollary}
\newtheorem{conjecture}[theorem]{Conjecture}
\theoremstyle{definition}
\newtheorem{definition}[theorem]{Definition}
\newtheorem*{remark}{Remark}
\newtheorem*{remarks}{Remarks}
\numberwithin{equation}{section}
\DeclareMathOperator{\GL}{GL}
\DeclareMathOperator{\ord}{ord}
\newcommand{\legendre}[2]{\genfrac{(}{)}{}{}{#1}{#2}}
\newcommand{\Addresses}{{%
  \bigskip
  \footnotesize

  \noindent S.~Dembner, \textsc{Department of Mathematics, University of Chicago, Chicago, IL 60637}\par\nopagebreak
  \textit{Email address}: \texttt{sdembner@uchicago.edu}

  \medskip

  \noindent V.~Jain, \textsc{Department of Mathematics, Massachusetts Institute of Technology, Cambridge, MA 02139}\par\nopagebreak
  \textit{Email address}: \texttt{vanshika@mit.edu}

}}
\def\keywords{\xdef\@thefnmark{}\@footnotetext}
\begin{document}

\pagestyle{plain}

\author{Spencer Dembner, Vanshika Jain}
\title{Hyperelliptic Curves and Newform Coefficients}
\date{August 4, 2020}
\maketitle

\begin{abstract}

We study which integers are admissible as Fourier coefficients of even integer weight newforms. In the specific case of the tau-function, we show that for all odd primes $\ell < 100$ and all integers $m \geq 1$, we have
$$ \tau(n) \neq \pm \ell, \pm 5^m. $$
For general newforms $f$ with even integer weight $2k$ and integer coefficients, we prove for most integers $j$ dividing $2k-1$ and all ordinary primes $p$ that $a_f(p^2)$ is never a $j$-th power. We prove a similar result for $a_f(p^4)$, conditional on the Frey-Mazur Conjecture. Our primary method involves relating questions about values of newforms to the existence of perfect powers in certain binary recurrence sequences, and makes use of bounds from the theory of linear forms in logarithms. The method extends without difficulty to a large family of Lebesgue-Nagell equations with fixed exponent. To prove results about general newforms, we also make use of the modular method and Ribet's level-lowering theorem.

\end{abstract}

\keywords{\emph{Key words and phrases.} Modular Forms, Lehmer's Conjecture, Frey Curves, Hyperelliptic Curves.}%

\section{Introduction and Statement of Results}

The Ramanujan tau-function $\tau(n)$ gives the coefficients of the weight $12$ modular form
\[\Delta(z) = \sum_{n=1}^\infty \tau(n) q^n := q \prod_{n=1}^\infty (1 - q^n)^{24} = q - 24q^2 + 252q^3 - 1472q^4 + \dots,\]
where $q = e^{2 \pi i z}$. In 1947, Lehmer posed the conjecture, still unproven, that $\tau(n) \neq 0$ for any $n \geq 1$. More recent work has focused on the variant problem of showing that $\tau(n)$ never takes certain odd values. In 1987, Murty, Murty and Shorey showed in \cite{murty1987odd} that for any odd $\alpha$, $\tau(n) = \alpha$ holds for at most finitely many $n$. However, the bounds in their result were far too large to be effective. Recently, Balakrishnan, Craig, Ono, and Tsai (\cite{balakrishnan2020short}, \cite{balakrishnan2020variants}) used a different method based on primitive prime divisors in Lucas sequences to rule out several specific odd primes as values of $\tau(n)$. Among their results is that for $n >1$, we have
$$ \tau(n) \notin \{\pm 1, \pm 3, \pm 5, \pm 7, \pm 13, \pm 17, -19, \pm 23, \pm 37, \pm 691\}.$$
Conditional on the Generalized Riemann Hypothesis, they also show that
$$\tau(n) \notin \left\{ \pm \ell: 41 \leq \ell \leq 97 \text{ with } \legendre{\ell}{5} = -1 \right\} \cup \{-11, -29, -31, -41, -59, -61, -71, -79, -89 \}. $$
Conspicuous in the statement of this result is the absence of the value $19$. In this paper, we resolve the question of whether $\tau(n)$ can equal $19$, and prove the following unconditional generalization:

\begin{theorem}\label{Maintheorem1}
If $\ell$ is any odd prime less than $100$, then $\tau(n) \neq \pm \ell$ for any $n$.
\end{theorem}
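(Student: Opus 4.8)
The plan is to combine Hecke multiplicativity with a parity observation to reduce to prime powers, pass to a Lucas sequence, apply the Bilu--Hanrot--Voutier primitive divisor theorem to cut down to finitely many cases, and resolve those as Lebesgue--Nagell equations of exponent $11$ via linear forms in logarithms.

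First I would reduce to prime powers. If $\tau(n) = \pm\ell$ and $n = \prod_i p_i^{a_i}$, then $\prod_i \tau(p_i^{a_i}) = \pm\ell$, and since $\ell$ is prime exactly one factor is $\pm\ell$ while the others are $\pm 1$; so it suffices to show $\tau(p^a) \neq \pm\ell$ for all primes $p$ and $a \geq 1$. By the classical congruence $\Delta(z) \equiv \sum_{k \geq 0} q^{(2k+1)^2} \pmod 2$ --- equivalently, $\tau(m)$ is odd iff $m$ is an odd perfect square --- the odd value $\pm\ell$ forces $p$ to be odd and $a = 2m$ to be even; in particular $\tau(p) = \pm\ell$ is impossible at once, and everything comes down to ruling out $\tau(p^{2m}) = \pm\ell$ for odd primes $p$ and $m \geq 1$.

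Next I would bring in Lucas sequences. Let $\alpha, \beta$ be the roots of $T^2 - \tau(p)T + p^{11}$; by Deligne they are complex conjugates of absolute value $p^{11/2}$, so the pair $(\alpha,\beta)$ is non-degenerate apart from CM-type exceptions, and $\tau(p^k) = U_{k+1}$ for the Lucas sequence $U_n := (\alpha^n - \beta^n)/(\alpha - \beta)$. Since $(U_n)$ is a divisibility sequence, for any prime $r \mid 2m+1$ we have $\tau(p^{r-1}) = U_r \mid U_{2m+1} = \pm\ell$, so $U_r \in \{\pm 1, \pm\ell\}$; the value $\pm 1$ cannot occur --- for $r = 3$ because $\tau(p)^2 - p^{11} = \pm 1$ reduces to the classically impossible equations $y^2 = x^{11} \pm 1$, and for $r \geq 5$ because $U_r$ then has a primitive prime divisor while $\pm 1$ has none. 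Passing to a minimal counterexample, we may therefore assume $2m+1 = q$ is an odd prime. Now Bilu--Hanrot--Voutier applies (our parameter $p^{11}$ far exceeds any exceptional value) and furnishes a primitive prime divisor $t$ of $U_q = \pm\ell$; necessarily $t = \ell$ and $t \equiv \pm 1 \pmod q$, so $q$ divides $\ell - 1$ or $\ell + 1$. For $\ell = 3$ no such odd prime $q$ exists and we are done; for every other odd prime $\ell < 100$ only a short list of pairs $(q,\ell)$ survives --- for instance $q \in \{3,5\}$ when $\ell = 19$, and $q \in \{3,7\}$ when $\ell = 97$.

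Finally, for each surviving pair the equation $U_q(\tau(p), p^{11}) = \pm\ell$ becomes a Diophantine equation in the prime $p$ and the integer $y := \tau(p)$: for $q = 3$ it is exactly $y^2 = x^{11} \pm \ell$, so the case $\ell = 19$ is precisely the question of whether $\tau(n)$ can equal $19$, while for $q \geq 5$ it is the norm-form equation $N_{\mathbb{Q}(\zeta_q)^+/\mathbb{Q}}\big(y^2 - (2 + \zeta_q + \zeta_q^{-1})\,p^{11}\big) = \pm\ell$. Factoring $y^2 \mp \ell = x^{11}$ in the ring of integers of $\mathbb{Q}(\sqrt{\mp\ell})$ --- whose class number is coprime to $11$ in all the cases needed --- one writes $y \mp \sqrt{\mp\ell}$ as a unit times an $11$th power up to a bounded factor and compares $\sqrt{\mp\ell}$-components, obtaining finitely many Thue equations of degree $11$, each resolved effectively via Baker-type bounds on linear forms in logarithms (in practice by standard software); the Deligne bound $|y| \leq 2p^{11/2}$ keeps every size under control, and one checks that no resulting integer point has $x$ prime with $y = \pm\tau(x)$. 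This last step is the main obstacle: the effective resolution of $y^2 = x^{11} \pm \ell$ for the roughly two dozen odd primes $\ell < 100$ other than $3$, together with the higher-$q$ analogues, is where genuine Diophantine input is indispensable and where essentially all the computation lies --- and the same machinery settles a broad family of Lebesgue--Nagell equations of fixed exponent. Two small technical points also require attention: if $p \mid \tau(p)$ then $p^2 \mid \tau(p^2)$, which forces $\ell = p$ and is dismissed directly; and the CM-type exceptions to non-degeneracy of $(\alpha,\beta)$, which pin down the squarefree part of $\tau(p)^2 - 4p^{11}$ (or force $\tau(p) = 0$), are isolated by a congruence condition and handled by hand.
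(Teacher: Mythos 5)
Your overall reduction — multiplicativity and the mod-$2$ congruence to pass to $\tau(p^{2m})$, the Lucas-sequence structure and Bilu--Hanrot--Voutier to constrain the index, then Diophantine equations — is in the same spirit as the paper, which cites exactly this argument from Balakrishnan--Craig--Ono--Tsai as its Theorem \ref{thm:divisibilityfortau}. But there are two genuine gaps.

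First, your conclusion from BHV is that the primitive prime divisor $t = \ell$ of $U_q$ satisfies $t \equiv \pm 1 \pmod q$, hence $q \mid \ell - 1$ or $q \mid \ell + 1$. This misses the case where $\ell$ divides the discriminant $D = \tau(p)^2 - 4p^{2k-1}$: then $\ell$ is by definition not a primitive divisor, its rank of apparition is $\ell$ itself, and one is forced to also allow $q = \ell$. That is precisely why the paper (following \cite{balakrishnan2020variants}) states $d \mid \ell(\ell^2 - 1)$ rather than $d \mid \ell^2 - 1$, and why, e.g., for $\ell = 19$ the paper must separately dispatch the degree-$9$ Thue equation $F_{18}(X,Y) = \pm 19$. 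Your list $q \in \{3,5\}$ for $\ell = 19$ is therefore incomplete, and the same omission persists for every $\ell$.

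Second, and more seriously, the claim that the resulting degree-$11$ Thue equations can be ``resolved effectively\ldots in practice by standard software'' does not hold in the $q = 5$ case. Applying Barros's reduction to $Y^2 = 5X^{22} \pm 4\ell$ produces Thue equations of degree $11$ whose Baker-type bound (Theorem \ref{thm:bounds for thue equation}) is on the order of $\exp(10^{278})$; no software can search that range, and this is not a matter of optimization. The paper's entire contribution at this step is the workaround: it reinterprets a solution as a perfect $11$-th power $p^{11}$ in a Fibonacci-type sequence $a u_n + b v_n$, uses a congruence sieve to force the index $n$ past $10^{300}$ (Proposition \ref{prop: 10^300 checking}), and then observes that the sequence grows like $\phi^n$, so $|x_n| > \exp(10^{298})$ — overwhelming the Baker upper bound $\exp(10^{281})$ and producing a contradiction without ever solving the Thue equations. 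Your proposal as written lacks this mechanism and would stall precisely where the paper's novelty lies; similarly, for the $d = \ell$ Thue equations with $\ell$ up to $97$, ``standard software'' is not enough, and the paper instead invokes the specialized unconditional bound of Bilu--Hanrot--Voutier (Theorem \ref{thm: Bilu bound on Thue solutions}) followed by a small finite search.
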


Theorem \ref{Maintheorem1} applies to finitely many odd primes. However, our techniques also yield the following result, which rules out infinitely many integers as possible values of the tau-function.

\begin{theorem}\label{Maintheorem2.a}
For all $m \geq 1$ and all $n$, we have $\tau(n) \neq \pm 5^m$.
\end{theorem}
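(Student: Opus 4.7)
The plan is to combine multiplicativity of $\tau$, primitive divisor theory for Lucas sequences, and the Frey curve + linear-forms-in-logarithms machinery flagged in the abstract. The three-step reduction turns a question about the global function $\tau$ into a finite collection of explicit ternary Diophantine equations.

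\textbf{Step 1 (Multiplicative reduction to prime powers).} If $\tau(n) = \pm 5^m$ with $m \geq 1$ and $n = \prod p_i^{e_i}$, then by multiplicativity $\prod \tau(p_i^{e_i}) = \pm 5^m$, so every factor is $\pm 5^{m_i}$ with $\sum m_i = m \geq 1$, and some $m_i \geq 1$. It therefore suffices to show that $\tau(p^e) \neq \pm 5^m$ for every prime $p$, every $e \geq 1$, and every $m \geq 1$.

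\textbf{Step 2 (Killing $e \geq 2$ by primitive divisors).} The Hecke recursion $\tau(p^{e+1}) = \tau(p)\tau(p^e) - p^{11} \tau(p^{e-1})$ exhibits $\tau(p^e)$ as the Lucas sequence term $U_{e+1}(\tau(p), p^{11})$. If $U_{e+1} = \pm 5^m$, then $5$ is the only prime dividing $U_{e+1}$. For $p \neq 5$, the rank of apparition $r(5)$ divides $5 - \legendre{\tau(p)^2 - 4 p^{11}}{5}$ and hence is at most $6$, so $5$ can appear as a \emph{primitive} divisor of $U_{e+1}$ only when $e+1 \leq 6$. The Bilu-Hanrot-Voutier theorem then forces a contradiction whenever $e + 1 \geq 31$, and for all but a tabulated list of smaller indices. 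The residual small cases produce explicit equations such as $\tau(p)^2 - p^{11} = \pm 5^m$ or $\tau(p)\bigl(\tau(p)^2 - 2p^{11}\bigr) = \pm 5^m$: each is a ternary Lebesgue-Nagell equation in $\tau(p), p, m$, and yields to the same Frey-curve/Baker's-method toolkit used in Step 3. The case $p = 5$ is handled separately: the Newton polygon of $X^2 - \tau(5) X + 5^{11}$, combined with $v_5(\tau(5)) = 1$, gives $v_5(\tau(5^e)) = e$, so $\tau(5^e) = \pm 5^m$ forces $m = e$ and requires the cofactor $\tau(5^e)/5^e$ to be $\pm 1$, which is ruled out by inspection for small $e$ and by the analogous Lucas-sequence argument applied to the normalized sequence for large $e$.

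\textbf{Step 3 (The case $e = 1$).} The surviving equation is $\tau(p) = \pm 5^m$. First, $p = 5$ is excluded by $\tau(5) = 4830 = 2 \cdot 3 \cdot 5 \cdot 7 \cdot 23 \neq \pm 5^m$, and the Ramanujan-style congruence $\tau(p) \equiv p(p+1) \pmod 5$ (valid for $p \neq 5$) forces $p \equiv 4 \pmod 5$. For these remaining primes, I would attach to any hypothetical solution a Frey-Hellegouarch elliptic curve whose conductor is supported on $\{2, 5, p\}$ and apply Ribet's level-lowering theorem modulo a well-chosen auxiliary prime $\ell$: the resulting weight-$2$ newform at very small level cannot, after direct enumeration, carry a mod-$\ell$ Galois representation matching the Frey curve.

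\textbf{Main obstacle.} The technical heart of the argument is Step 3, because $m$ is not fixed a priori: the modular method on its own only disposes of bounded ranges of $m$. Following the strategy advertised in the abstract, I would close the gap with a Baker-Mignotte linear-forms-in-logarithms bound that caps $m$ uniformly in $p$; the finitely many residual pairs $(p, m)$ would then be eliminated by direct congruence or modular computations. Gluing the modular method to Baker's method to obtain this uniformity in the exponent $m$ is the principal difficulty.
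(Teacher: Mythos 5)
There is a genuine gap: your argument is a plan whose hardest steps are left open, and you say so yourself. After the reduction to prime powers, every surviving case ($\tau(p)=\pm 5^m$, and the ``residual small indices'' such as $\tau(p^2)=\tau(p)^2-p^{11}=\pm5^m$ and, crucially, $\tau(p^4)=\pm5^m$, which you never isolate) is deferred to a ``Frey-curve/Baker's-method toolkit'' with the exponent $m$ unbounded, and your own closing paragraph concedes that obtaining uniformity in $m$ is the unresolved principal difficulty. Since no mechanism for that uniformity is supplied, the proof does not close. A secondary issue is that Step 3 is attacking a vacuous case with heavy machinery: $\tau(n)$ is odd if and only if $n$ is an odd square, so $\tau(p^e)$ is even whenever $e$ is odd and the equation $\tau(p)=\pm5^m$ (and every odd-$e$ case) is impossible for parity reasons alone; your proposed Frey curve for the single relation $\tau(p)=\pm5^m$ also has no evident ternary structure to hang a curve on.

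For comparison, the paper needs neither Frey curves nor linear forms in logarithms for this theorem. It first invokes the Balakrishnan--Craig--Ono--Tsai reduction (Theorem \ref{thm:divisibilityfortau}, which is essentially your Step 2 done carefully, using parity and the Bilu--Hanrot--Voutier theorem) to conclude that $\tau(n)=\pm5^m$ forces $n=p^{d-1}$ with $d\mid 5(5^2-1)$, i.e.\ $n=p^2$ or $n=p^4$. The case $n=p^2$ dies instantly from Ramanujan's congruence $\tau(n)\equiv n\sigma_1(n)\pmod 5$, since $p^2(1+p+p^2)\not\equiv 0\pmod 5$ for $p\neq5$ and $\tau(5^2)$ is checked directly. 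For $n=p^4$ one gets $Y^2=5X^2\pm4\cdot5^m$ with $X=p^{11}$; because $5$ ramifies and $2$ is inert in $\bbQ(\sqrt5)$, the only element of norm $4\cdot 5^m$ up to units is $2\sqrt5^{\,m}$, so $p^{11}$ would have to occur in a sequence $5^m u_n$ or $5^m v_n$, which is impossible by the Bugeaud--Mignotte--Siksek classification of perfect powers in the Fibonacci and Lucas sequences (apart from $p=5$, excluded by computing $\tau(5^4)$). If you want to salvage your outline, replace Step 3 by the parity observation, and replace the ``toolkit'' in Step 2 by these two closed-form arguments; as written, the unbounded-$m$ cases remain unproven.
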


Beyond determining which values are admissible, we can limit the possible factorizations of the values $\tau(p^4)$.

\begin{theorem}\label{Maintheorem2.b}
For all primes $p$, $\tau(p^4)$ is divisible only by $p$, or by primes $\ell \equiv 0, 1, 4 \pmod 5$.
\end{theorem}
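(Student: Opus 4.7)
The plan is to exploit Hecke multiplicativity together with a single discriminant computation. From the Hecke recursion
\[
\tau(p^{n+1}) = \tau(p)\,\tau(p^n) - p^{11}\,\tau(p^{n-1}),
\]
a short induction gives
\[
\tau(p^4) = \tau(p)^4 - 3 p^{11}\,\tau(p)^2 + p^{22}.
\]
Setting $c = \tau(p)^2$, this exhibits $\tau(p^4)$ as the value at $c$ of the quadratic $Q(X) = X^2 - 3p^{11} X + p^{22}$, whose discriminant is $9 p^{22} - 4 p^{22} = 5 p^{22}$. The appearance of the prime $5$ here is the whole point of the argument.

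Now let $\ell$ be an odd prime dividing $\tau(p^4)$ with $\ell \neq p$. Reducing modulo $\ell$, we see that $Q$ has a root in $\bbF_\ell$, so its discriminant $5 p^{22}$ must be a square in $\bbF_\ell^*$. Since $p \not\equiv 0 \pmod{\ell}$, $p^{22} = (p^{11})^2$ is already a nonzero square, so the requirement collapses to $5$ being a quadratic residue modulo $\ell$. Because $5 \equiv 1 \pmod 4$, quadratic reciprocity yields $\legendre{5}{\ell} = \legendre{\ell}{5}$, which equals $1$ precisely when $\ell \equiv \pm 1 \pmod 5$. The prime $\ell = 5$ is included in the allowed residues and requires no separate analysis; indeed mod $5$, $Q(X) \equiv (X + p^{11})^2$, so a double root always exists and $5$ may genuinely divide $\tau(p^4)$.

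The only case not covered by the discriminant argument is $\ell = 2$ with $p$ odd (when $p = 2$, we have $\ell = p$ and there is nothing to prove). Here $p^{11}, p^{22}$ are odd and $\tau(p)^2 \equiv \tau(p) \pmod 2$, so
\[
\tau(p^4) \equiv \tau(p)^4 + \tau(p)^2 + 1 \equiv \tau(p) + \tau(p) + 1 \equiv 1 \pmod 2,
\]
ruling out $\ell = 2$. Collecting the cases proves the theorem. I do not anticipate a significant obstacle: the entire argument is powered by the identity for $\tau(p^4)$ and the observation that the associated discriminant is $5 \cdot \text{(square)}$, with only a minor parity check needed to dispose of $\ell = 2$.
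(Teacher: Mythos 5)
Your proof is correct and takes essentially the same route as the paper: completing the square in your quadratic $Q$ gives $(2\tau(p)^2 - 3p^{11})^2 = 5p^{22} + 4\tau(p^4)$, which is exactly the paper's statement that $(p,\,2\tau(p)^2 - 3p^{11})$ is an integer point on $Y^2 = 5X^{2\cdot 11} + 4\tau(p^4)$, and your reduction mod $\ell$ plus quadratic reciprocity is the same argument as the paper's Proposition showing any prime $\ell \equiv 2,3 \pmod 5$ dividing $\tau(p^4)$ must equal $p$. Your explicit parity check disposing of $\ell = 2$ is a small point the paper leaves implicit, but otherwise the two arguments coincide.
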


Our final theorems are more general, applying to all newforms of sufficiently large even weight $2k$ with integer coefficients. Among other things, we show that certain Fourier coefficients are never $\ell$-th powers, for sufficiently large primes $\ell$ dividing $2k-1$.

\begin{theorem}\label{Maintheorem3}
If $f(z) = \sum a_f(n) q^n$ is a newform with integer coefficients and even integer weight $2k$, then the following are true.
\begin{enumerate}
    \item If $p$ is an ordinary prime for $f(z)$, i.e., $p \nmid a_f(p)$, then $a_f(p^2) \neq m^j$ for any $j \geq 4$ dividing $2k-1$ and any nonzero integer $m$. In particular, if $2k \geq 6$, then $a_f(p^2) \neq m^{2k-1}$.
    \item Assume the Frey-Mazur Conjecture. If $2k-1$ is divisible by a prime $\ell \geq 19 $, and $p \neq 2,5$ is ordinary, then $a_f(p^4) \neq m^\ell$ for any nonzero integer $m$.
    \item Assume the Frey-Mazur Conjecture. Suppose that $f$ has residually reducible mod $2$ Galois representation, and level $N$ coprime to $19$. Suppose that $2k-1$ is divisible by a prime $\ell > 19$. If $a_f(5^4), a_f(19^4) \neq \pm 19$, then $a_f(n) \neq \pm 19$ for any $n$.
\end{enumerate}
\end{theorem}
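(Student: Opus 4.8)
The plan is to reduce $a_f(n) = \pm 19$ to a short list of Diophantine equations and kill each one either by elementary estimates or by the modular-method package built for parts (1)--(2). Since $a_f$ is multiplicative and $19$ is prime, if $a_f(n) = \pm 19$ and $n = \prod_i p_i^{e_i}$ then exactly one factor $a_f(p_i^{e_i})$ equals $\pm 19$; so it suffices to prove $a_f(p^e) \ne \pm 19$ for all primes $p$ and all $e \ge 1$. If $p \mid N$ then $a_f(p^e) \in \{0, \pm p^{e(k-1)}\}$, never $\pm 19$ since $2k-1 \ge 23$. If $p \nmid N$ but $p \mid a_f(p)$ (a non-ordinary prime, of which there are finitely many), then $a_f(p^e) \equiv 0 \pmod p$ for all $e \ge 1$, forcing $p = 19$; and a Newton-polygon computation on the roots of $T^2 - a_f(19)T + 19^{2k-1}$ (using that $v_{19}(a_f(19)) < (2k-1)/2$ by the Weil bound) gives $v_{19}(a_f(19^e)) = e\cdot v_{19}(a_f(19)) \ge e$, so $a_f(19^e) = \pm 19$ forces $e = 1$. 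Thus we are reduced to $p \nmid a_f(p)$, where $a_f(p^e) = U_{e+1}$ for the genuine Lucas sequence of $T^2 - a_f(p)T + p^{2k-1}$, with $|\alpha| = |\beta| = p^{k-1/2}$ and $\alpha\beta = p^{2k-1} \ge 2^{23}$, together with the case $e = 1$.

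First, $e = 1$: since $f$ has rational integer coefficients and $2k-1$ is odd, the mod-$2$ cyclotomic character is trivial and $\det \bar\rho_{f,2}$ is trivial; the reducibility hypothesis then makes the semisimplification of $\bar\rho_{f,2}$ equal $\mathbf 1 \oplus \mathbf 1$, so $a_f(p) \equiv \operatorname{tr}\bar\rho_{f,2}(\operatorname{Frob}_p) \equiv 0 \pmod 2$ for every $p \nmid 2N$, whence $a_f(p) \ne \pm 19$; the prime $p = 2$ is settled directly (from $2 \mid N$, or from $f \bmod 2$), and this same $2$-adic input will also kill $p = 2$ in the cases $e = 2, 4$ by a congruence modulo a power of $2$. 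Next, bound $e$: if $U_{e+1} = \pm 19$ then by the Bilu--Hanrot--Voutier primitive-divisor theorem (the Lucas pair is non-exceptional because $\alpha\beta \ge 2^{23}$) $U_{e+1}$ has a primitive prime divisor, necessarily $19$, so $19 \equiv \pm 1 \pmod{e+1}$ and hence $e+1 \mid 18$ or $e+1 \mid 20$. For the values $e+1 \in \{4,6,9,10,18,20\}$, combining the divisibilities $U_d \mid U_{e+1}$ for small $d \mid e+1$ (using $U_{2m} = U_m V_m$) forces $a_f(p) \in \{\pm 1, \pm 19\}$ and then makes some remaining Lucas or Lehmer cofactor exceed $19$ in absolute value (again using $p^{2k-1} \ge 2^{23}$), a contradiction. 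Only $e = 2$ and $e = 4$ survive.

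For these I would use Frey curves. Writing $y = p^{(2k-1)/\ell}$, legitimate since $\ell \mid 2k-1$, the equation $a_f(p^2) = \pm 19$ becomes the Lebesgue--Nagell equation $a_f(p)^2 \mp 19 = y^\ell$ with prime exponent $\ell > 19$, while completing the square in $a_f(p^4) = a_f(p)^4 - 3 a_f(p)^2 p^{2k-1} + p^{2(2k-1)} = \pm 19$ yields $\bigl(2 a_f(p)^2 - 3 p^{2k-1}\bigr)^2 - 5\bigl(p^{2(2k-1)/\ell}\bigr)^\ell = \mp 76$. To a putative solution I attach the Frey curve $E$ of parts (1)--(2), its model fixed by $a_f(p)$ being even, level-lower $\bar\rho_{E,\ell}$ via Ribet's theorem, and apply the Frey--Mazur conjecture to conclude that $E$ is isogenous to an elliptic curve of the predicted, $p$-free conductor; since $p \mid \operatorname{cond}(E)$ and isogenous curves have equal conductor, $p$ must divide a fixed integer supported on $\{2,5,19\}$. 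The prime $2$ is already excluded; $p = 5$ and $p = 19$ are genuine obstructions — for $p = 5$ the term $5\bigl(p^{2(2k-1)/\ell}\bigr)^\ell$ collapses to an odd power of $5$, leaving the Ramanujan--Nagell equation $Y^2 \mp 76 = 5^m$, and for $p = 19$ the prime $19$ divides both $76$ and $p^{2(2k-1)}$, destroying the Frey-curve setup — and these are exactly the cases removed by the hypotheses $a_f(5^4) \ne \pm 19$ and $a_f(19^4) \ne \pm 19$. (The hypothesis $19 \nmid N$ keeps $19$ out of the conductors of $f$ and of the Frey curves, so level-lowering and Frey--Mazur are unobstructed there.)

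The hard part is the last step: writing down Frey curves with correctly computed conductor for equations carrying the coefficient $5$ and the constants $\pm 19, \pm 76$; verifying that $\bar\rho_{E,\ell}$ is irreducible for every prime $\ell > 19$ dividing $2k-1$, so that Ribet's theorem and Frey--Mazur both apply; and, after level-lowering, listing the finitely many newforms of the predicted level and ruling each one out except in the excluded cases $p \in \{5, 19\}$. Extracting the $2$-adic information cleanly from the reducibility hypothesis, and the bookkeeping of the Bilu--Hanrot--Voutier exceptional indices and the primes dividing $2N$, are the remaining technical points.
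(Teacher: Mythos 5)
Your proposal addresses only part (3), taking parts (1) and (2) as available machinery; you do not prove them (the paper proves (1) via Darmon--Merel and Poonen for the signature $(j,j,2)$ with $j \geq 4$, and (2) via a Frey curve attached to $5X^\ell + 4Y^\ell = Z^2$). For part (3), your route genuinely differs from the paper's: the paper cites Theorem~\ref{thm:divisibilityfortau} (i.e.\ \cite{balakrishnan2020variants}, Theorem~3.2) to conclude at once that $n = p^{d-1}$ with $d \in \{3,5,19\}$, and then handles $d=3$ by the Lebesgue--Nagell tables of \cite{bugeaud2006classical2,barros2010lebesgue}, $d=19$ by a degree-$18$ Thue equation, and $d=5$ by a Frey curve at level $380$; whereas you re-derive the reduction from scratch using multiplicativity, the mod-$2$ reducibility (to kill $e=1$), a Newton-polygon argument at non-ordinary primes, and Bilu--Hanrot--Voutier. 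If your BHV step is airtight, it is an improvement: it gives $e+1 \mid 18$ or $e+1 \mid 20$ and so excludes $e+1 = 19$ a priori, bypassing the Thue equation the paper solves.

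That said, several steps are not yet watertight. (i) Your non-exceptionality justification ``the Lucas pair is non-exceptional because $\alpha\beta \geq 2^{23}$'' is not how BHV's theorem is stated; you must actually consult their tables of exceptional Lucas pairs at each index $e+1 \leq 30$ you keep alive (I believe $n=19$ carries no exceptions, but that is a table check, not a size argument). You also need to dispose of the case $19 \mid \Delta$ explicitly: there $19$ cannot be the primitive divisor, so a primitive divisor would have to be a prime $\neq 19$ dividing $\pm 19$, a contradiction --- this is fine, but should be said. (ii) Your elimination of composite indices $e+1 \in \{4,6,9,10,18,20\}$ via $U_d \mid U_{e+1}$ and $U_{2m} = U_m V_m$ is only sketched; you concede as much, but this is real work (each case needs its own size estimate off $p^{2k-1} \geq 2^{23}$). (iii) Your parenthetical that ``$19 \nmid N$ keeps $19$ out of the conductors \ldots\ of the Frey curves'' is incorrect: $19$ divides $B = \pm 76$, so $19$ always divides the Frey conductor $N(E) = 2^\alpha \cdot 5 \cdot 19 \cdot p$ and the lowered level $380$. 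The real role of $19 \nmid N$ is to apply the reduction theorem (or, in your version, to run the Newton-polygon argument cleanly at $p = 19$). (iv) Finally, the genuinely hard bookkeeping at $d=5$ --- verifying $\bar\rho_{E,\ell}$ is irreducible, identifying the four level-$380$ weight-$2$ newforms, ruling out the two rational ones by Frey--Mazur and the two irrational ones (LMFDB labels 380.2.a.c, 380.2.a.d) by computing norms of $c_3 \pm 2r$ and noting they are not divisible by $\ell > 19$ --- you correctly flag as outstanding; the paper carries all of it out. So: a sound and somewhat more self-contained blueprint with an attractive shortcut past the Thue equation, but several substantive gaps still to fill, and one incorrect side remark about where $19 \nmid N$ is used.
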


Part (1) and (2) of Theorem \ref{Maintheorem3} are analogous to Theorem 1.4 of \cite{balakrishnan2020variants}, which says that for forms of sufficiently high weight, a given value $\pm \ell^m$ cannot occur as a Fourier coefficient. In Theorem \ref{thm: Neglected modularity result}, we prove another result of a similar flavor.

In Subsection \ref{subsec: Newform preliminaries}, we state several basic facts about newforms, and prove Theorem \ref{Maintheorem2.b}. As we discuss in this subsection, the main difficulty in ruling out possible values of the tau-function is classifying points on certain hyperelliptic curves and Thue equations. In Sections \ref{Preliminariessection}, \ref{ElementaryResultsSection}, and \ref{sec: proof of theorem 1.1}, we focus on equations of the form
$$ Y^2 = 5X^{22} + 4 \alpha, $$
which arise in proving that $\tau(p^4) \neq \alpha$. These equations are closely related to the arithmetic of real quadratic fields, and by extension, to the existence of perfect $11$-th powers in certain binary recurrence sequences. Specifically, the sequences in question are of \emph{Fibonacci type}: they all satisfy the same recurrence
$$ a_{n+2} = a_{n+1} + a_n, $$
as the Fibonacci sequence $(u_n)$ and the classical Lucas numbers $(v_n)$. Due to work of Bugeaud, Mignotte, and Siksek \cite{bugeaud2006classical}, all perfect powers in the sequences $(u_n), (v_n)$ are known. However, less is known about perfect powers in arbitrary sequences of Fibonacci type. In Section \ref{ElementaryResultsSection}, we use purely elementary congruence arguments to rule out perfect $11$-th powers in many of the sequences relevant to values of the tau-function. We also deduce Theorem \ref{Maintheorem2.a}, and we show that
$$ a_f(p^4) \notin \{ \pm 31, \pm 59, \pm 61, \pm 79, \pm 101, \pm 139, \pm 149, \pm 151, \pm 179, \pm 191, \pm 199, \pm 19^2 \},  $$
for newforms $f$ with integer coefficients, weight $12,14,18,$ or $20$, and any level coprime to $p$.

In some cases, congruences are insufficient to rule out perfect $11$-th powers, because $\pm 1$ appears either in the sequence or in its continuation to negative values. In these cases, although congruences alone cannot rule out nontrivial perfect powers, they can be used to show that any perfect power $a u_n + b v_n$ must have very large index $n$. In Section \ref{sec: proof of theorem 1.1}, we combine this observation with upper bounds coming from the theory of linear forms in logarithms to show that the remaining sequences relevant to Theorem \ref{Maintheorem1} likewise have no nontrivial perfect powers. The method we use to solve these equations via perfect powers in recurrences generalizes without difficulty to any Lebesgue-Nagell equation of the form
$$ x^2 +D = C y^{2n}, $$
where $n$ is fixed and $\bbQ(\sqrt{C})$ has class number one.

In the rest of Section \ref{sec: proof of theorem 1.1}, we complete the proof of Theorem \ref{Maintheorem1} by solving the relevant Thue equations. All these equations were computed in \cite{balakrishnan2020variants}, but these computations were often conditional on the Generalized Riemann Hypothesis. By using an upper bound proven in \cite{bilu2001existence}, we are able to perform the same computations unconditionally. 

In Sections \ref{sec:modularapproaches} and \ref{sec: Applications of modularity}, we give the proof of Theorem \ref{Maintheorem3}. Our work in these sections is based on the modularity theorem proved by Wiles and others (\cite{Wiles1995modular},\cite{taylor1995heckealgebras},\cite{breuil2001modularity}), as well as Ribet's level-lowering theorem \cite{ribet1991levellowering}. Most but not all of the results we prove are conditional on the unproven Frey-Mazur Conjecture, which says that for large enough primes $p$, an elliptic curve $E / \bbQ$ is determined up to isogeny by its $p$-torsion Galois representation $E[p]$ (see Section \ref{sec:modularapproaches} for a precise statement).

\subsection*{Acknowledgments}

We thank Professor Ken Ono, Wei-Lun Tsai, and William Craig for their suggestions and advice on this project. Both authors were supported by the NSF (DMS-2002265), the NSA (H98230-20-1-0012), the Templeton World Charity Foundation, and the Thomas Jefferson Fund at the University of Virginia.

\section{Preliminaries}\label{Preliminariessection}

In the first subsection, we state several basic results about newform coefficients. Then, in the second subsection, we explain how certain coefficients are related to perfect powers in recurrences.

\subsection{Newform Coefficients} \label{subsec: Newform preliminaries}
We begin by stating several key properties of newforms. For background on the theory of modular forms, see \cite{apostol2012modular}, \cite{Cohen2017modular}, and \cite{Ono2004web}. In this section, $f$ denotes a newform with integer coefficients, even weight $2k$, and level $N$. We write the Fourier expansion of $f$ as
\begin{equation}
   f(z) =  q + \sum_{n \geq 2} a_f(n) q^n.
\end{equation}

\begin{theorem}\label{thm:propertiesofnewforms}
The following are true.

\begin{enumerate}
    \item If $\gcd(n_1, n_2) = 1$, then $a_f(n_1 n_2) = a_f(n_1) a_f(n_2)$.
    \item If $p \nmid N$ is prime and $m \geq 2$, then
    $$ a_f(p^m) = a_f(p) a_f(p^{m-1}) - p^{2k-1} a_f(p^{m-2}). $$
    \item If $p \mid N$ is prime, then we have
    $$ a_f(p^m) = \begin{cases} (\pm 1)^m p^{(k-1)m} & \text{ if } \ord_p(N) = 1, \\ 0 & \text{ if } \ord_p(N) \geq 2. \end{cases} $$
\end{enumerate}
\end{theorem}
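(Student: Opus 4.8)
These are all classical facts about Hecke eigenforms, so the plan is to derive them from the standard theory of Hecke operators acting on the space $S_{2k}(\Gamma_0(N))$ rather than to prove anything genuinely new. The key input is that a newform $f$ is a simultaneous eigenform for all Hecke operators $T_p$ ($p \nmid N$) and $U_p$ ($p \mid N$), normalized so that $a_f(1) = 1$, and that in this normalization the eigenvalue of $T_p$ (resp. $U_p$) is precisely $a_f(p)$.

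For part (1), I would invoke the multiplicativity of the Hecke operators: for $\gcd(m,n) = 1$ one has $T_m T_n = T_{mn}$ on the relevant space, and comparing the action of these operators on $f$ with the identity $a_f(m)a_f(n)q^{?}$ coming from reading off Fourier coefficients gives $a_f(mn) = a_f(m)a_f(n)$. Concretely, since $a_f(n)$ is the $T_n$-eigenvalue and $T_{mn} = T_m T_n$, the eigenvalue relation is immediate. For part (2), the relevant structural fact is the relation $T_{p^{m}} = T_p T_{p^{m-1}} - p^{2k-1} T_{p^{m-2}}$ among Hecke operators at a good prime $p \nmid N$ (this comes from the Euler-factor identity $\sum_m T_{p^m} X^m = (1 - T_p X + p^{2k-1} X^2)^{-1}$); applying both sides to $f$ and reading off eigenvalues yields the three-term recursion. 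Equivalently, one can cite that the $L$-function of $f$ has Euler product with local factor $(1 - a_f(p)p^{-s} + p^{2k-1-2s})^{-1}$ at good primes, which encodes exactly this recursion.

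For part (3), the prime $p$ divides the level, so one works with $U_p$ instead of $T_p$. When $\ord_p(N) \geq 2$, the standard fact is that the $U_p$-eigenvalue of a newform vanishes, so $a_f(p) = 0$, and then the recursion $a_f(p^m) = a_f(p)a_f(p^{m-1})$ (no $p^{2k-1}a_f(p^{m-2})$ term appears for bad primes) forces $a_f(p^m) = 0$ for all $m \geq 1$. When $\ord_p(N) = 1$, the newform is "$p$-primitive" and the theory of Atkin--Lehner gives $a_f(p)^2 = p^{2k-2}$, i.e., $a_f(p) = \pm p^{k-1}$; combined with $a_f(p^m) = a_f(p)^m$ this gives $a_f(p^m) = (\pm 1)^m p^{(k-1)m}$. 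I would cite Atkin--Lehner--Li theory (e.g., the references \cite{apostol2012modular}, \cite{Ono2004web}) for these eigenvalue computations rather than reproving them.

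The main obstacle is not any single step but rather deciding how much of the Hecke-operator machinery to take as a black box versus develop; since the paper explicitly points the reader to \cite{apostol2012modular}, \cite{Cohen2017modular}, \cite{Ono2004web} for background, the cleanest route is to state that all three parts are immediate consequences of the multiplicativity and recursion relations among Hecke operators together with the Atkin--Lehner theory of oldforms and newforms, and give precise citations. The one place warranting a sentence of genuine argument is the bad-prime case $\ord_p(N) = 1$, where one should explicitly note that the pseudo-eigenvalue/Atkin--Lehner sign $w_p = \pm 1$ and the relation $a_f(p) = -w_p p^{k-1}$ (for trivial nebentypus and $\ord_p(N)=1$) account for the $(\pm 1)^m$ ambiguity in the stated formula.
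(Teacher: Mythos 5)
The paper does not prove this theorem --- it is stated as standard background, with references \cite{apostol2012modular}, \cite{Cohen2017modular}, \cite{Ono2004web} cited in the preceding paragraph for the underlying Hecke and Atkin--Lehner theory. Your proposal correctly identifies the relevant structural facts (multiplicativity and the three-term recursion among Hecke operators at primes $p \nmid N$; $U_p$-eigenvalues and the Atkin--Lehner pseudo-eigenvalue relation $a_f(p)^2 = p^{2k-2}$ when $\ord_p(N) = 1$, and $a_f(p) = 0$ when $\ord_p(N) \geq 2$), and treating these as citable black boxes is exactly the paper's approach.
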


\noindent By repeatedly applying the recurrence from part (2) of Theorem \ref{thm:propertiesofnewforms}, one can show that primes $p$ for which $a_f(p^m) = \pm \alpha$ give rise to solutions to Diophantine equations.

\begin{corollary}[\cite{balakrishnan2020variants}, Lemma 5.1]
With notation as in Theorem \ref{thm:propertiesofnewforms}, suppose that $p \nmid N$ is prime, and let $\alpha$ be any integer. Then we have the following:

\begin{enumerate}\label{cor: recurrencesgivediophantineequations}
    \item If $a_f(p^2) = \alpha$, then $(p, a_f(p))$ is an integer point on
    \begin{equation} \label{C Curves} 
        C_{k, \alpha}: Y^2 = X^{2k-1} + \alpha. 
    \end{equation}
    \item If $a_f(p^4) = \alpha$, then $(p, 2 a_f(p)^2 - 3 p^{2k-1})$ is an integer point on
    \begin{equation} \label{themainhyperellipticcurves}
        H_{2k-1,\alpha}: Y^2 = 5 X^{2(2k-1)} + 4 \alpha.
    \end{equation}
    
    \item For $m$ a positive integer, let $F_{2m}(X,Y)$ be the homogeneous polynomial defined by the condition
    $$ \frac{1}{1-\sqrt{Y} T + X T^2} = \sum_{i=0}^{\infty} F_{i}(X,Y) \cdot T^i. $$
    Then for all $m \geq 1$, $(p^{2k-1}, a_f(p)^2)$ is a solution to the \emph{Thue equation}
    $$ F_{2m}(X,Y) = a_f(p^{2m}). $$
\end{enumerate}

\end{corollary}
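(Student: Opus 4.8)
The plan is to obtain all three statements by unwinding the recurrence in part~(2) of Theorem~\ref{thm:propertiesofnewforms} (valid since $p \nmid N$), using as initial data only the normalization $a_f(1) = 1$ from the Fourier expansion (so $a_f(p^0) = 1$) together with $a_f(p^1) = a_f(p)$.

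For part~(1), I would take $m = 2$ in the recurrence to get $a_f(p^2) = a_f(p)\,a_f(p) - p^{2k-1}a_f(1) = a_f(p)^2 - p^{2k-1}$; if this equals $\alpha$ then $a_f(p)^2 = p^{2k-1} + \alpha$, so $(X,Y) = (p, a_f(p))$ is an integer point on $C_{k,\alpha}$. For part~(2), I would iterate the recurrence two more times to reach the closed forms
$$ a_f(p^3) = a_f(p)^3 - 2p^{2k-1}a_f(p), \qquad a_f(p^4) = a_f(p)^4 - 3p^{2k-1}a_f(p)^2 + p^{2(2k-1)}. $$
Setting $t = a_f(p)^2$ and $s = p^{2k-1}$, so that $a_f(p^4) = t^2 - 3st + s^2$, the whole content of part~(2) is the elementary identity $(2t - 3s)^2 = 4t^2 - 12st + 9s^2 = 5s^2 + 4(t^2 - 3st + s^2)$. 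Hence, if $a_f(p^4) = \alpha$, then $\bigl(2a_f(p)^2 - 3p^{2k-1}\bigr)^2 = 5\,p^{2(2k-1)} + 4\alpha$, which is exactly the statement that $\bigl(p,\,2a_f(p)^2 - 3p^{2k-1}\bigr)$ lies on $H_{2k-1,\alpha}$. This quadratic identity is the only genuine computation in (1)--(2).

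For part~(3), the plan is to identify the generating function of the sequence $m \mapsto a_f(p^m)$. Let $G(T) = \sum_{m\geq 0} a_f(p^m)T^m$. Multiplying through by $1 - a_f(p)T + p^{2k-1}T^2$ and comparing coefficients, the constant term is $a_f(1) = 1$, the coefficient of $T$ is $a_f(p) - a_f(p)a_f(1) = 0$, and for $m \geq 2$ the coefficient of $T^m$ is $a_f(p^m) - a_f(p)a_f(p^{m-1}) + p^{2k-1}a_f(p^{m-2})$, which vanishes by the recurrence; thus $G(T) = \bigl(1 - a_f(p)T + p^{2k-1}T^2\bigr)^{-1}$. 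I would then expand $\bigl(1 - \sqrt{Y}\,T + XT^2\bigr)^{-1}$ as a geometric series, obtaining $F_i(X,Y) = \sum_{a + 2b = i}\binom{a+b}{a}(\sqrt{Y})^a(-X)^b$; this is a polynomial in $X$ and $Y$ precisely when $i$ is even (odd powers of $\sqrt{Y}$ appear only in odd-indexed coefficients), and it is then homogeneous of degree $i/2$ in $X$ and $Y$. Consequently the substitution $X \mapsto p^{2k-1}$, $\sqrt{Y}\mapsto a_f(p)$ makes sense on the even-indexed coefficients, and comparing $T^{2m}$-coefficients of the two series yields $F_{2m}(p^{2k-1}, a_f(p)^2) = a_f(p^{2m})$. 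Thus $(p^{2k-1}, a_f(p)^2)$ solves $F_{2m}(X,Y) = a_f(p^{2m})$, and for $m \geq 3$ the binary form $F_{2m}$ has degree $\geq 3$ (and is separable), so this is a Thue equation.

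I do not expect any real obstacle here: every part is a short, direct manipulation of the recurrence. The only points that call for a little care are verifying the quadratic identity behind part~(2) and, in part~(3), observing that $F_{2m}$ depends on $\sqrt{Y}$ only through $Y$, so that evaluating at $\sqrt{Y} = a_f(p)$ is unambiguous.
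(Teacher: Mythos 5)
The paper does not supply its own proof of this corollary; it is cited directly as Lemma 5.1 of \cite{balakrishnan2020variants}. Your argument fills in that gap with exactly the kind of computation the citation is implicitly invoking, and it is correct in all parts: the two iterations of the Hecke recurrence give $a_f(p^2) = a_f(p)^2 - p^{2k-1}$ and $a_f(p^4) = a_f(p)^4 - 3p^{2k-1}a_f(p)^2 + p^{2(2k-1)}$, the identity $(2t-3s)^2 = 5s^2 + 4(t^2 - 3st + s^2)$ is right, and the generating-function manipulation for part (3) — showing $\sum_m a_f(p^m)T^m = \bigl(1 - a_f(p)T + p^{2k-1}T^2\bigr)^{-1}$ and then specializing $X \mapsto p^{2k-1}$, $\sqrt{Y} \mapsto a_f(p)$ — is the natural way to see why the $F_{2m}$ arise. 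Your observation that odd-indexed coefficients involve odd powers of $\sqrt{Y}$, so that only the even-indexed ones are polynomials in $(X,Y)$, is a worthwhile point of care that the corollary's phrasing glosses over. One small addendum you could make explicit: the product formula \eqref{eq:Thueproductformula} shows $F_{2m}$ is a product of $m$ pairwise distinct linear forms, so for $m \geq 3$ it is genuinely a Thue form (irreducible or at least separable of degree $\geq 3$); for $m = 1, 2$ the equation is linear or quadratic, which is consistent with the paper only needing $d \geq 7$, i.e., $m \geq 3$, in its applications.
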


For later use, we record the following alternate expression for $F_{2m}(X,Y)$:

\begin{equation} \label{eq:Thueproductformula} 
F_{2m}(X,Y) = \prod_{k=1}^m \left( Y - 4 X \cos^2 \left( \frac{\pi k}{2m +1} \right) \right). 
\end{equation}

Another consequence of Theorem \ref{thm:propertiesofnewforms} is that for $p \nmid N$, the values $a_f(p^m)$ form a \emph{Lucas sequence.} In the paper \cite{balakrishnan2020variants}, the authors use this observation, along with deep work of Bilu, Hanrot, and Voutier \cite{bilu2001existence} on prime divisors in Lucas sequences, to rule out specific odd prime values as possible coefficients of certain newforms. They prove the following key result, which also admits a partial generalization to weight $4$.

\begin{theorem}[\cite{balakrishnan2020variants}, Theorem 3.2]\label{thm:divisibilityfortau}
Let $f$ have weight $2k \geq 6$ and residually reducible mod $2$ Galois representation. If $\ell$ is an odd prime not dividing $N$ such that $a_f(n) = \pm \ell^m$ for some $m \geq 0$, then $n = p^{d-1}$, where $p \nmid N$ and $d$ are odd primes, and $d \mid \ell (\ell^2-1)$.
\end{theorem}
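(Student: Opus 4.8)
The plan is to combine multiplicativity of $a_f$, the recursions of Theorem~\ref{thm:propertiesofnewforms}, and the theory of primitive divisors of Lucas sequences. Write $n = \prod_i p_i^{e_i}$. Part~(1) gives $a_f(n) = \prod_i a_f(p_i^{e_i})$, so each $a_f(p_i^{e_i})$ is $\pm 1$ times a power of $\ell$, in particular nonzero. If $p_i \mid N$, part~(3) forces $\ord_{p_i}(N) = 1$ and $a_f(p_i^{e_i}) = \pm p_i^{(k-1)e_i}$; since $k \geq 3$ this is a power of $\ell$ only when $p_i = \ell$, which is excluded by $\ell \nmid N$ unless $e_i = 0$. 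Hence every prime dividing $n$ is prime to $N$, so part~(2) applies to it.

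\textbf{Lucas sequences and parity.} Fix $p \nmid N$ and let $\alpha,\beta$ be the roots of $X^2 - a_f(p)X + p^{2k-1}$. From part~(2) with $a_f(p^0) = 1$, $a_f(p^1) = a_f(p)$, one gets $a_f(p^m) = U_{m+1}$, where $U_j = (\alpha^j - \beta^j)/(\alpha - \beta)$ is the Lucas sequence of the first kind with parameters $(P,Q) = (a_f(p), p^{2k-1})$. The hypothesis enters here: a reducible two-dimensional representation over $\mathbb{F}_2$ has semisimplification a sum of $\mathbb{F}_2^{\times}$-valued characters, which are trivial, so $a_f(q)$ is even for every odd prime $q \nmid N$. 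Since $P$ is even and $Q$ odd, induction on $U_{j+1} = PU_j - QU_{j-1}$ gives $U_j \equiv j \pmod 2$, so $a_f(q^e) = U_{e+1}$ is odd exactly when $e$ is even. Applying the same parity count at $p = 2$ (when $2 \nmid N$; otherwise use part~(3)) shows $n$ is odd, and that every $p^e \parallel n$ with $e \geq 1$ has $e$ even, so $e+1 \geq 3$ is odd and $a_f(p^e) = U_{e+1}$ is an odd integer of the form $\pm \ell^{m'}$. Moreover $m' \neq 0$: the equality $U_3 = \pm 1$ reads $a_f(p)^2 = p^{2k-1} \pm 1$, ruled out by an elementary factorization together with Lebesgue's theorem on $x^2 + 1 = y^n$; the remaining odd indices with $U_{e+1} = \pm 1$ are excluded below.

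\textbf{Forcing $e+1$ prime, dividing $\ell(\ell^2-1)$.} Here I invoke the Bilu--Hanrot--Voutier theorem: for $j > 30$, $U_j$ has a primitive prime divisor, i.e.\ one dividing $U_j$ but neither $U_1,\dots,U_{j-1}$ nor $P^2 - 4Q$. If $a_f(p^e) = U_{e+1} = \pm \ell^{m'}$ with $e+1 > 30$, this primitive divisor must be $\ell$, so $\ell \nmid U_j$ for $j < e+1$; then for any prime $d \mid e+1$ with $d < e+1$, the divisibility $U_d \mid U_{e+1}$ forces $U_d = \pm 1$, impossible for the odd prime $d \geq 3$. Hence $e+1$ is prime, say $e+1 = d$. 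The range $3 \leq e+1 \leq 30$ is dispatched similarly from the explicit list of defective Lucas pairs, using that $P$ is even and $Q = p^{2k-1}$ with $2k-1 \geq 5$. Now $\ell \mid U_d$ while $\ell \nmid U_j$ for $j < d$, so the rank of apparition $\rho(\ell)$ of $\ell$ in $(U_j)$ is $d$. If $\ell \nmid (P^2 - 4Q)Q$, standard Lucas-sequence theory gives $\rho(\ell) \mid \ell - \legendre{P^2 - 4Q}{\ell}$, hence $d \mid \ell^2 - 1$; if $\ell \mid P^2 - 4Q$, a repeated root mod $\ell$ gives $\rho(\ell) = \ell$, so $d = \ell$; and $\ell \mid Q$ (i.e.\ $\ell = p$) is reduced to a finite check by the same primitive-divisor argument (if $p$ is ordinary then $\ell \nmid U_j$ for all $j$, a contradiction; otherwise the primitive divisor forces $e+1 \leq 30$). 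In every case $d \mid \ell(\ell^2-1)$.

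\textbf{Conclusion and the main obstacle.} Since no proper prime power $p^e \parallel n$ with $e \geq 1$ can have $a_f(p^e) = \pm 1$, the multiplicative decomposition of $a_f(n) = \pm \ell^m$ contains exactly one nontrivial prime power --- immediately so when $m = 1$, and in general after a further application of the primitive-divisor theorem to rule out a product of two proper $\ell$-power values --- so $n = p^{d-1}$ with $p \nmid N$ and $d$ an odd prime dividing $\ell(\ell^2-1)$. The principal obstacle is the dossier of small indices $e+1 \leq 30$: excluding both $U_{e+1} = \pm 1$ and $U_{e+1}$ equal to a power of $\ell$ at a composite index requires the full Bilu--Hanrot--Voutier classification of defective Lucas sequences, supplemented by ad hoc Diophantine arguments of exactly the flavor carried out for the curves $Y^2 = X^{2k-1} + \alpha$ and $Y^2 = 5X^{2(2k-1)} + 4\alpha$ elsewhere in this paper.
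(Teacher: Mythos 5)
The paper does not prove this theorem; it is quoted verbatim from \cite{balakrishnan2020variants} (Theorem 3.2) and used as a black box, so there is no in-paper proof to compare against. That said, your proposed route — multiplicativity, the Lucas-sequence identification $a_f(p^m) = U_{m+1}$ with $(P,Q)=(a_f(p),p^{2k-1})$, parity via the residually reducible mod $2$ representation, and the Bilu--Hanrot--Voutier primitive-divisor theorem together with the rank of apparition — is indeed the architecture of the BCOT argument, and most of the intermediate steps you sketch (ruling out $U_3 = \pm 1$ by Lebesgue-type factorizations, forcing $d := e+1$ prime from primitivity, deducing $d \mid \ell^2 - 1$ or $d = \ell$ from $\rho(\ell)$) are carried out correctly.

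Two places are genuinely under-argued. First, the parity step at $p = 2$: for an odd prime $q \nmid 2N$ the semisimplification argument does give $a_f(q) \equiv 0 \pmod 2$, but at the residue characteristic $q = 2$ this reasoning does not apply directly, and the recursion mod $2$ gives $a_f(2^e) \equiv a_f(2)^e$, so "the same parity count" shows $2 \nmid n$ only after one separately establishes that $a_f(2)$ is even. This is true for the residually reducible forms in question, but it needs its own line, not an appeal to the generic-prime argument. Second, and more seriously, the claim that $n$ is a \emph{single} prime power is not justified. When $m = 1$ this is immediate, as you say, since $a_f(p^e) = \pm 1$ is excluded for $e \geq 1$; but for $m \geq 2$ you assert that "a further application of the primitive-divisor theorem" rules out two distinct primes $p_1, p_2$ with $a_f(p_i^{e_i}) = \pm\ell^{m_i}$, $m_i \geq 1$. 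The primitive-divisor theorem applies to each Lucas sequence $(U^{(i)}_j)$ separately and places no joint constraint; it does not prevent $\ell$ from being a primitive divisor at index $d_1$ in one sequence and at index $d_2$ in another with both terms pure powers of $\ell$. Some additional argument specific to this situation is needed here, and this is exactly the part of the theorem one must get from \cite{balakrishnan2020variants} rather than from the general machinery you cite.

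A smaller imprecision: your treatment of $\ell \mid Q$ (i.e.\ $\ell = p$) invokes the primitive-divisor theorem in the non-ordinary subcase, but when $\ell \mid \gcd(P,Q)$ the pair $(\alpha,\beta)$ fails the coprimality condition required for the Bilu--Hanrot--Voutier theorem, so the appeal is not licit as written and this subcase needs a direct argument.
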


Thus, in order to rule out the possibility that $a_f(n) = \pm \ell^m$ for a fixed odd prime $\ell$, it is enough to show that $a_f(p^{d-1}) \neq \pm \ell^m$, where $d$ is one of finitely many odd primes dividing $\ell(\ell^2-1)$. For $d \geq 7$, solutions to $a_f(p^{d-1}) = \pm \ell^m$ give solutions to Thue equations, which have been intensively studied. For $d = 3$, we obtain solutions to hyperelliptic curves of the form \eqref{C Curves}. For $|\alpha| \leq 100$, equation \eqref{C Curves} has been completely solved by work of Bugeaud, Mignotte, and Siksek \cite{bugeaud2006classical2} and Barros \cite{barros2010lebesgue}.

This means that for small primes $\ell$, the main obstacle is the case $d = 5$, where we obtain a solution to \eqref{themainhyperellipticcurves}. This case is closely related to the arithmetic of the real quadratic field 
$$K:= \bbQ(\sqrt{5}).$$
The following elementary proposition shows that \eqref{themainhyperellipticcurves} will have few solutions if $\alpha$ is divisible by any prime equivalent to $2$ or $3$ mod $5$. 

\begin{proposition}\label{prop: alpha can't have divisors 2,3 mod 5}
Fix some $\alpha$ and some $d \geq 1$, and suppose that $(x,y)$ is an integer point on one of the curves $Y^2 = 5 X^{2d} \pm 4 \alpha$. Then any prime $\ell \equiv 2, 3 \pmod 5$ which divides $\alpha$ must divide $x$ as well.
\end{proposition}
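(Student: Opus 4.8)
The plan is to exploit the fact that the residue of $\ell$ modulo $5$ governs whether $5$ is a quadratic residue modulo $\ell$, and then simply reduce the curve equation modulo $\ell$. First I would record the elementary observation that $\ell \equiv 2, 3 \pmod 5$ forces $\ell \neq 5$, and that for any such odd $\ell$ one has $\legendre{5}{\ell} = -1$. Indeed, since $\frac{5-1}{2} = 2$ is even, quadratic reciprocity gives $\legendre{5}{\ell}\legendre{\ell}{5} = 1$, so $\legendre{5}{\ell} = \legendre{\ell}{5}$; and as the squares mod $5$ are $\{1,4\}$, the latter symbol equals $-1$ precisely when $\ell \equiv 2, 3 \pmod 5$.

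Next, suppose for contradiction that $\ell \mid \alpha$ but $\ell \nmid x$. Reducing $y^2 = 5 x^{2d} \pm 4\alpha$ modulo $\ell$ (using $\ell \neq 2$ for the moment, so $4$ is invertible and irrelevant) yields $y^2 \equiv 5 x^{2d} = 5 (x^d)^2 \pmod \ell$. Since $\ell \nmid x$, the element $x^d$ is invertible modulo $\ell$; moreover $\ell \nmid y$, because $\ell \mid y$ would force $\ell \mid 5(x^d)^2$, impossible as $\ell$ divides neither $5$ nor $x$. Hence $5 \equiv \bigl(y \cdot (x^d)^{-1}\bigr)^2 \pmod \ell$, exhibiting $5$ as a square modulo $\ell$ and contradicting $\legendre{5}{\ell} = -1$. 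Therefore $\ell \mid x$.

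The only case not covered by the reciprocity computation is $\ell = 2$, which does satisfy $2 \equiv 2 \pmod 5$; I would dispose of it by hand modulo $8$. If $2 \mid \alpha$ but $x$ is odd, then $x^{2d} \equiv 1 \pmod 8$ and $4\alpha \equiv 0 \pmod 8$, so the equation forces $y^2 \equiv 5 \pmod 8$, which is impossible since every square is $\equiv 0, 1, 4 \pmod 8$; hence $2 \mid x$. I do not anticipate any real obstacle here — the whole argument is a short residue computation. The only points needing a little care are ruling out the degenerate subcase $\ell \mid y$ in the main argument, and separately handling $\ell = 2$, for which the clean reciprocity argument degenerates and a direct mod-$8$ check is required instead.
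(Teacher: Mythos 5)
Your proof is correct and takes essentially the same route as the paper: reduce the curve equation modulo $\ell$, note that $\ell \nmid x$ (and hence $\ell \nmid y$) forces $5$ to be a quadratic residue modulo $\ell$, and conclude via quadratic reciprocity that $\ell \equiv \pm 1 \pmod 5$. You are in fact slightly more careful than the paper on one point: the paper's reciprocity step tacitly assumes $\ell$ is odd, whereas you explicitly dispose of $\ell = 2$ with the mod-$8$ observation that $y^2 \equiv 5 \pmod 8$ is impossible, which is a genuine (if small) gap to fill since $5$ is trivially a square modulo $2$.
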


\begin{proof}
Suppose that some prime $\ell \neq 5$ divides $\alpha$. Then we have
$$ y^2 \equiv 5 x^{2d} \pmod{\ell}. $$
If $x \not \equiv 0 \pmod \ell$, then $x^{2d}$ is a nonzero square mod $\ell$. Since $5 x^{2d}$ is a nonzero square as well, it follows that $5$ is a square mod $\ell$. By quadratic reciprocity, this implies that
$$ \legendre{5}{\ell} = \legendre{\ell}{5} = 1. $$
Therefore, either $\ell \equiv 1,4 \pmod 5$, or $\ell$ divides $x$. \qedhere
\end{proof}

By Theorem \ref{thm:divisibilityfortau}, we know that $\tau(p^4) \neq \pm \ell^m$ for primes $\ell \equiv \pm 2 \pmod 5$. As a consequence of Proposition \ref{prop: alpha can't have divisors 2,3 mod 5}, we show the following stronger result.

\begin{corollary}\label{cor:tau p^4 not divisible by primes 2 3 mod 5}
The following are true.
\begin{enumerate}
    \item For any prime $p$, $\tau(p^4)$ can only be divisible by $p$, or by primes $\ell \equiv 0,1, 4 \pmod 5$. In particular, if $\tau(p^4)$ is divisible by any prime $2, 3$ mod $5$, then $p$ is not ordinary for the $\tau$-function, i.e., $p$ divides $\tau(p)$.
    \item Let $f$ be an even weight newform with integer coefficients, and suppose $p \nmid N$. Then $a_f(p^4)$ can only be divisible by $p$, or by primes $\ell \equiv 0,1, 4 \pmod 5$.
\end{enumerate}
\end{corollary}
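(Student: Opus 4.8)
The plan is to derive both parts of Corollary \ref{cor:tau p^4 not divisible by primes 2 3 mod 5} from Proposition \ref{prop: alpha can't have divisors 2,3 mod 5}, applied to the hyperelliptic curves furnished by part (2) of Corollary \ref{cor: recurrencesgivediophantineequations}. I would start with statement (2), the general-newform case. Set $\alpha := a_f(p^4)$. Since $p \nmid N$, Corollary \ref{cor: recurrencesgivediophantineequations}(2) tells us that $(p,\ 2a_f(p)^2 - 3p^{2k-1})$ is an integer point on $H_{2k-1,\alpha} \colon Y^2 = 5X^{2(2k-1)} + 4\alpha$, which is exactly a curve of the form $Y^2 = 5X^{2d} + 4\alpha$ with $d = 2k-1 \geq 1$. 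Proposition \ref{prop: alpha can't have divisors 2,3 mod 5} then forces any prime $\ell \equiv 2,3 \pmod 5$ dividing $\alpha$ to divide the $X$-coordinate, namely $p$, hence $\ell = p$. Combined with the fact that the only prime $\equiv 0 \pmod 5$ is $5$ itself (which Proposition \ref{prop: alpha can't have divisors 2,3 mod 5} excludes from consideration anyway, and which lies in the permitted residue class), this says precisely that every prime divisor of $a_f(p^4)$ is either $p$ or congruent to $0,1,4 \pmod 5$.

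For statement (1), I would first observe that it applies $f = \Delta$, which has level $N = 1$; therefore the hypothesis $p \nmid N$ holds for every prime $p$, and part (2) immediately yields the divisibility claim for $\tau(p^4) = a_\Delta(p^4)$. For the "in particular" clause, suppose $\tau(p^4)$ is divisible by some prime $\ell \equiv 2,3 \pmod 5$; by the first assertion of (1) this forces $\ell = p$, i.e. $p \mid \tau(p^4)$. It then suffices to show $p \mid \tau(p^4)$ implies $p \mid \tau(p)$, i.e. that $p$ fails to be ordinary. This follows from a one-line induction on the recurrence of Theorem \ref{thm:propertiesofnewforms}(2): reducing $a_f(p^m) = a_f(p)\,a_f(p^{m-1}) - p^{2k-1} a_f(p^{m-2})$ modulo $p$ kills the second term, so $a_f(p^m) \equiv a_f(p)^m \pmod p$ for all $m$; in particular $\tau(p^4) \equiv \tau(p)^4 \pmod p$, and since $p$ is prime, $p \mid \tau(p^4)$ gives $p \mid \tau(p)$.

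I do not expect a genuine obstacle in this corollary: all the substantive content is contained in Proposition \ref{prop: alpha can't have divisors 2,3 mod 5} and in the passage from coefficient values $a_f(p^4)$ to integral points on $H_{2k-1,\alpha}$, both of which are already in hand. The only points requiring care are bookkeeping ones — confirming that the relevant $\alpha$ in the curve equation is indeed $a_f(p^4)$, noting that $\Delta$ has level $1$ so every $p$ is allowed, and supplying the elementary congruence $a_f(p^m) \equiv a_f(p)^m \pmod p$ that lets us reinterpret "$p \mid \tau(p^4)$" as non-ordinarity of $p$.
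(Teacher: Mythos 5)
Your proposal is correct and follows exactly the same route as the paper's proof: reduce to an integral point on $H_{2k-1,\alpha}$ via Corollary~\ref{cor: recurrencesgivediophantineequations}(2), then invoke Proposition~\ref{prop: alpha can't have divisors 2,3 mod 5}. The one thing you do beyond the paper's own (very terse) proof is spell out the ``in particular'' clause of part~(1), supplying the congruence $a_f(p^m)\equiv a_f(p)^m\pmod p$ from the Hecke recurrence to pass from $p\mid\tau(p^4)$ to $p\mid\tau(p)$ — a small gap the paper leaves implicit, and your argument for it is correct.
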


\begin{proof}[Proof of Corollary \ref{cor:tau p^4 not divisible by primes 2 3 mod 5} and Theorem \ref{Maintheorem2.b}]

To prove both statements, suppose that $p \nmid N$ and that $a_f(p^4) = \alpha$. Then we obtain a point $(p, 2 a_f(p)^2 - 3 p^{2k-1})$ on the curve $Y^2 = 5 X^{2(2k-1)} + 4 \alpha$. It follows that any prime dividing $\alpha = a_f(p^4)$ must either be $0,1,4 \pmod 5$, or else must divide $p$ as well. In this case, the prime must be $p$. \qedhere
\end{proof}

\subsection{Recurrences of Fibonacci Type} \label{subsec:recurrencesoffibtype}

To analyze solutions to \eqref{themainhyperellipticcurves} when $\alpha$ is arbitrary, we first establish a connection to powers in certain recurrences. Let $N = \operatorname{Norm}_{K / \bbQ} \colon K \to \bbQ$ denote the norm map, defined by 
$$N(a+b\sqrt{5}) = (a+b\sqrt{5})(a-b\sqrt{5}) = a^2 - 5 b^2.$$
If $(X, Y)$ is an integer point on \eqref{themainhyperellipticcurves} for any integer $\alpha$, then we have 
\begin{equation} \label{eq:modifiedpell}
N(Y+ X^{2k-1} \sqrt{5}) = \pm 4 \alpha. 
\end{equation}

This is closely related to the Pell equation $N(a + b\sqrt{5}) = \pm 1$. Let $O_K$ denote the ring of integers of $K$. The elements $\zeta \in O_K$ such that $N(\zeta) = \pm 1$ are exactly the units of $O_K$. Since $K$ is a real quadratic field, by Dirichlet's unit theorem, the unit group $O_K^{\times}$ has rank $1$, and in this case, a fundamental unit is given by 
$$\omega := \frac{1+\sqrt{5}}{2}.$$
In order to study \eqref{eq:modifiedpell}, we begin with the observation that up to multiplication by units, there are only finitely many elements of norm $\pm 4 \alpha$. Suppose that $(\zeta_1),\ldots, (\zeta_r)$ are all the ideals of norm $4 |\alpha|$, and write $\zeta_i = 2 (a_i + b_i \sqrt{5})$. Then for some $i$ and some $k$, we have
$$ Y+ X^{2k-1} \sqrt{5} =  \pm 2 (a_i + b_i \sqrt{5}) \omega^k. $$
For any $\gamma = a +b \sqrt{5} \in K$, we write its conjugate as $\overline{\gamma} = a - b \sqrt{5}$. Summing conjugates, we see that
\begin{equation} \label{eq:solforXintermsofsequences}
    X^{2k-1} = \pm \left( \frac{a_i}{\sqrt{5}}(\omega^k - \overline{\omega}^k) + b_i(\omega^k + \overline{\omega}^k) \right).
\end{equation}

\begin{definition}
For $n \geq 0$, the Fibonacci numbers $u_n$ and Lucas numbers $v_n$ are defined by
$$ u_n = \frac{\omega^n - \overline{\omega}^n}{\omega - \overline{\omega}}, \quad v_n = \omega^n + \overline{\omega}^n. $$
\end{definition}
We can continue the Fibonacci sequence to negative values of $j$ by requiring that it satisfy the same recurrence; for example, $u_{-1} = 1$, and $u_{-2} = -1$. By induction, if $j < 0$ we have
$$ u_j = (-1)^{j+1} u_{-j}. $$
Similarly, we get the same identity for $v_n$. Therefore, in \eqref{eq:solforXintermsofsequences}, we can assume $k \geq 0$. 

\begin{definition}
We call $(x_n)$ a \emph{sequence of Fibonacci type} if $x_n = a u_n + b v_n$ for some $a, b \in \bbZ$.  
\end{definition}

Our work so far shows that an integer solution to \eqref{themainhyperellipticcurves} gives rise to a perfect power in one of finitely many sequences of Fibonacci type. Since $-1$ is a perfect $(2k-1)$-st power for all $k$, we can ignore the sign in the statement of the following proposition.

\begin{proposition}\label{prop:pellsolutionsgiveperfect powers}
Suppose that $(X,Y)$ satisfies \eqref{eq:modifiedpell}. With notation as in the previous section, there is some $i \leq r$ such that $X^{2k-1} = \pm ( a_i u_n + b_i v_n)$.
\end{proposition}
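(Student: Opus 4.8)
The proposition essentially restates the formula \eqref{eq:solforXintermsofsequences} derived just above it, so the plan is to assemble the pieces carefully. I would begin by setting $\gamma := Y + X^{2k-1}\sqrt 5$, which lies in $O_K$; by \eqref{eq:modifiedpell} its norm is $\pm 4\alpha$, so the principal ideal $(\gamma)$ has norm $4\abs{\alpha}$. Since only finitely many ideals of $O_K$ have a given norm and $K = \bbQ(\sqrt 5)$ has class number one, I would fix generators $\zeta_1, \dots, \zeta_r$ for the ideals of norm $4\abs{\alpha}$. Because $O_K^{\times} = \{\pm \omega^m : m \in \bbZ\}$, it follows that $\gamma = \pm \zeta_i \omega^n$ for some $i \le r$ and some $n \in \bbZ$.

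The one step that is not purely formal is normalizing the generators so that $\zeta_i = 2(a_i + b_i\sqrt 5)$ with $a_i, b_i \in \bbZ$, as in the statement. Here I would use that $N(\gamma) = Y^2 - 5X^{2(2k-1)} \equiv 0 \pmod 4$ forces $X \equiv Y \pmod 2$, whence $2 \mid \gamma$ in $O_K$; combined with the facts that $\bbZ[\sqrt 5]$ has index $2$ in $O_K$ and that every element of $O_K$ not divisible by $2$ has a unit multiple lying in $\bbZ[\sqrt 5]$, this lets me replace each relevant $\zeta_i$ by a unit multiple of the required shape (stripping off extra powers of $2$ first if $4 \mid \alpha$, which keeps the list finite). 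Granting this, I would subtract the conjugate from $\gamma = \pm 2(a_i + b_i\sqrt 5)\omega^n$ and divide by $2\sqrt 5$: since $\omega - \overline{\omega} = \sqrt 5$ and $\omega \overline{\omega} = -1$, the quantities $(\omega^n - \overline{\omega}^n)/\sqrt 5$ and $\omega^n + \overline{\omega}^n$ are exactly $u_n$ and $v_n$, giving $X^{2k-1} = \pm(a_i u_n + b_i v_n)$. Finally I would pass to $n \ge 0$ using $u_{-m} = (-1)^{m+1} u_m$ and $v_{-m} = (-1)^m v_m$, which converts a negative index into its absolute value at the cost of a sign on $a_i$; this sign is absorbed into the global $\pm$ once the finite list of $\zeta_i$ is taken to be closed under passing to conjugate ideals and negation.

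The computation is short, and the only genuinely delicate point is the normalization in the middle step, where the congruence $N(\gamma) \equiv 0 \pmod 4$ and the structure of the order $\bbZ[\sqrt 5] \subset O_K$ are what make the integer coefficients $a_i, b_i$ appear; I expect this to be the main (indeed essentially the only) obstacle. The supporting facts — finiteness of the set of ideals of bounded norm, class number one for $\bbQ(\sqrt 5)$, and the description of $O_K^{\times}$ via the fundamental unit $\omega$ — are standard and can be invoked directly.
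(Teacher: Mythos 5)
Your proof is correct and follows essentially the same approach as the paper's, which establishes the proposition via the computation in \eqref{eq:modifiedpell}--\eqref{eq:solforXintermsofsequences} and the remark on negative indices just preceding the statement. You supply more explicit justification for the normalization $\zeta_i = 2(a_i + b_i\sqrt{5})$ (using that $2$ is inert in $O_K$, so $2 \mid \gamma$, together with the fact that every element of $O_K$ has a unit multiple lying in $\bbZ[\sqrt{5}]$), a step the paper asserts without comment.
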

Conversely, given a perfect power in some sequence $( a u_n + b v_n )$, we obtain a corresponding point on the hyperelliptic curve.

\begin{lemma}\label{lem:perfectpowersgivecurvepoints}
Suppose that for some $d \geq 1$ and some $x$, we have $a u_n + b v_n = x^d$. Then for some $y$, $(x,y)$ is an integer point on
$$ Y^2 = 5 X^{2d} \pm 4 (a^2 - 5 b^2). $$
\end{lemma}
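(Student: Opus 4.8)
The plan is to reverse-engineer the computation that led from the hyperelliptic curve to the Fibonacci-type recurrence in Proposition \ref{prop:pellsolutionsgiveperfect powers}. Suppose $a u_n + b v_n = x^d$. Using the closed-form expressions $u_n = (\omega^n - \overline{\omega}^n)/(\omega - \overline{\omega})$ and $v_n = \omega^n + \overline{\omega}^n$, together with $\omega - \overline{\omega} = \sqrt 5$, I would write
$$ a u_n + b v_n = \frac{a}{\sqrt 5}(\omega^n - \overline{\omega}^n) + b(\omega^n + \overline{\omega}^n) = \left(\frac{a}{\sqrt 5} + b\right)\omega^n + \left(b - \frac{a}{\sqrt 5}\right)\overline{\omega}^n. $$
Set $\gamma := b\sqrt 5 + a \in K$, so that $\gamma \omega^n = (a + b\sqrt5)\omega^n$ has $\sqrt 5$-part equal to $\sqrt 5 \cdot (a u_n + b v_n)$ after taking the appropriate trace; more precisely $\gamma \omega^n + \overline{\gamma}\,\overline{\omega}^n$ and $(\gamma\omega^n - \overline{\gamma}\,\overline{\omega}^n)/\sqrt 5$ recover $v$- and $u$-type sums. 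The cleanest route is: let $\eta := \gamma \omega^n$ where $\gamma = a + b\sqrt 5$, and observe that $\eta + \overline\eta = 2b\sqrt5 u_n + \ldots$ — I'd just verify directly from the definitions that $\sqrt 5\,(a u_n + b v_n) = \tfrac12\big((2b\sqrt5 + 2a)\omega^n - (2a - 2b\sqrt5)\overline\omega^n\big)$, matching the structure of \eqref{eq:solforXintermsofsequences}.

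Concretely, I would proceed as follows. First, define $\delta := a u_n + b v_n$ and compute its conjugate-partner $\delta' := a u_n' + b v_n'$ where I replace $\omega$ by $\overline\omega$ throughout (equivalently, the quantity obtained by Galois conjugation of $\gamma\omega^n$ appropriately normalized); one finds $\delta' = -a u_n + b v_n$ using $u_n' := (\overline\omega^n - \omega^n)/(\omega - \overline\omega) = -u_n$ and $v_n' = v_n$. Hmm — that gives $\delta \delta' = b^2 v_n^2 - a^2 u_n^2$, which is not quite the norm I want. The right move instead: work with $\gamma\omega^n$ where $\gamma = a + b\sqrt 5$ directly, set $Y := \gamma\omega^n + \overline\gamma\,\overline\omega^n$ (an integer, in fact $= 2(a u_n' \cdot(\ldots))$) — the point is that $N(\gamma\omega^n) = N(\gamma)N(\omega)^n = \pm(a^2 - 5b^2)$, and writing $\gamma\omega^n = \tfrac12(Y + Z\sqrt5)$ for the appropriate rational $Y, Z$ gives $Y^2 - 5Z^2 = \pm 4(a^2 - 5b^2)$. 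The content of the lemma is then the identification $Z = $ (up to sign) $2(a u_n + b v_n)/\text{(something)}$; tracking constants carefully, $Z$ should equal $\pm 2 x^d$ or $x^d$ depending on normalization, so that $Y^2 = 5 Z^2 \pm 4(a^2 - 5b^2)$ becomes $Y^2 = 5 X^{2d} \pm 4(a^2 - 5b^2)$ after substituting $Z = x^d$ and $X = x$. I would then set $y := Y$, which is an integer since it is the trace of an element of $O_K$ (or $\tfrac12 O_K$, where the factor of $2$ is exactly what produces the $4$ and the $\pm$).

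The main obstacle — really the only subtle point — is bookkeeping the factors of $2$ and $\sqrt 5$, and in particular justifying that the companion quantity $y$ is genuinely an integer. Since $\gamma = a + b\sqrt 5$ need not lie in $O_K = \bbZ[\omega]$ but does lie in $\tfrac12 O_K$, and $\omega^n \in O_K$, the product $\gamma\omega^n$ lies in $\tfrac12 O_K$; its trace and $\tfrac{1}{\sqrt5}$-scaled "difference of conjugates" are therefore integers, which is what lets us write it as $\tfrac12(y + x^d\sqrt 5)$ with $y, x^d$ integers of the same parity. Taking norms then immediately yields $y^2 - 5x^{2d} = N(2\gamma\omega^n)/\ldots = \pm 4 N(\gamma) = \pm 4(a^2 - 5b^2)$, where the sign is $N(\omega)^n = (-1)^n$. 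Rearranging gives the claimed curve equation. The identity $u_j = (-1)^{j+1}u_{-j}$ (and its analogue for $v_j$) already recorded in the excerpt ensures we lose no generality in the sign or in assuming $n \geq 0$. I expect the whole argument to be a short direct computation once the normalization $\gamma\omega^n = \tfrac12(y + x^d\sqrt5)$ is set up correctly.
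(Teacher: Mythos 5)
Your argument is correct and essentially reproduces the paper's proof, which writes $2(a+b\sqrt{5})\,\omega^{n} = Y + (a u_n + b v_n)\sqrt{5}$ with $Y \in \bbZ$ and takes norms to obtain $Y^2 - 5x^{2d} = (-1)^n \cdot 4(a^2 - 5b^2)$; your normalization $\gamma\omega^n = \tfrac12(Y + Z\sqrt{5})$ with $Z = a u_n + b v_n = x^d$ is the same computation up to the factor of $2$, so once you discard the false start with $\delta\delta'$ you land exactly where the paper does. One small correction on the integrality discussion: $\gamma = a + b\sqrt{5} = (a - b) + 2b\omega$ already lies in $O_K = \bbZ[\omega]$, so the half-integers come from expanding $\omega^n = \tfrac12(v_n + u_n\sqrt{5})$ (where $u_n \equiv v_n \pmod 2$), not from $\gamma$ sitting in $\tfrac12 O_K$.
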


\begin{proof}
We have already seen that for any $k \geq 0$, we have
$$ 2 (a + b \sqrt{5}) \left(\frac{1+\sqrt{5}}{2}\right)^k = Y + (a u_k + b v_k) \sqrt{5}, $$
for some integer $Y$. Setting $k = n$ and taking norms on both sides, we have
$$ 4 (a^2 - 5 b^2) \cdot (-1)^n = Y^2 - 5 X^{2d}. $$
We can rewrite this as
$$ Y^2 = 5 X^{2d} + (-1)^n \cdot 4(a^2 - 5 b^2),  $$
and this gives the desired conclusion. \qedhere
\end{proof}

In 1982 and 1983, Peth\H{o} \cite{Petho1982perfect} and Shorey and Stewart \cite{shorey1983diophantine} independently proved that any binary recurrence sequence has finitely many perfect powers. More recently, Silliman and Vogt \cite{silliman2015powers} applied the Frey-Mazur Conjecture on Galois representations associated to elliptic curves to outline a conditional classification of perfect powers in Lucas sequences satisfying recurrences other than the Fibonacci recurrence (for the statement of the Frey-Mazur Conjecture, see Conjecture \ref{conj:Frey-Mazur}). However, aside from the Fibonacci and Lucas sequences (\cite{bugeaud2006classical}), there has been little work on perfect powers in sequences of Fibonacci type.

\section{Elementary Results} \label{ElementaryResultsSection}

In this section, we prove several results about perfect powers and newform coefficients which depend only on the elementary considerations stated so far. In the next section, we will use upper bounds on solutions to Thue equations to solve the remaining cases.

Given a sequence $(a u_n + b v_n)$ of Fibonacci type, we can often rule out the possibility of perfect $d$-th powers, for specific values of $d$, by an elementary computation. The key idea for these computations is that, by the pigeonhole principle, any linear recurrence is periodic mod $m$ for any $m$. In fact, all the sequences we are interested in will have the same period mod $m$, except for finitely many $m$. Let $\pi(m)$ denote the period of the Fibonacci sequence mod $m$.

\begin{lemma}
Let $(x_n) = (a u_n + b v_n)$ be a sequence of Fibonacci type. If $a^2 - 5 b^2$ is coprime to $m$, then $(x_n)$ has period $\pi(m)$ mod $m$.
\end{lemma}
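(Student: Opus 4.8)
The plan is to understand the sequence $(x_n) = (a u_n + b v_n)$ through its generating function or, equivalently, through the closed form coming from the characteristic roots $\omega = \frac{1+\sqrt 5}{2}$ and $\overline\omega = \frac{1-\sqrt 5}{2}$. The key structural fact is that the period of a linear recurrence modulo $m$ is governed by the multiplicative order of the companion matrix $M = \begin{pmatrix} 1 & 1 \\ 1 & 0 \end{pmatrix}$ in $\GL_2(\bbZ/m\bbZ)$, and this order is exactly $\pi(m)$ by definition of the Fibonacci period. Since $(u_n)$ and $(v_n)$ both satisfy the Fibonacci recurrence, the vector $(x_{n+1}, x_n)^T$ evolves by left-multiplication by $M$; hence $\pi(m)$ is always a period of $(x_n)$ mod $m$. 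The content of the lemma is the converse: under the hypothesis $\gcd(a^2 - 5b^2, m) = 1$, the period of $(x_n)$ mod $m$ is not a proper divisor of $\pi(m)$.

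**Main steps.**

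First I would record that $\pi(m)$ is a period of every Fibonacci-type sequence mod $m$; this is immediate from the matrix description, or directly from $u_{n+\pi(m)} \equiv u_n$ and $v_{n+\pi(m)} \equiv v_n \pmod m$ (the latter following since $v_n = u_{n+1} + u_{n-1}$, say, or from $v_n = 2u_{n+1} - u_n$). So the period of $(x_n)$ divides $\pi(m)$; call it $\rho$. Second, suppose for contradiction that $\rho < \pi(m)$, so $\rho \mid \pi(m)$ properly. Then $x_{n+\rho} \equiv x_n \pmod m$ for all $n$, and in particular for two consecutive indices, which forces $M^\rho$ to fix the vector $(x_1, x_0)^T = (a u_1 + b v_1, a u_0 + b v_0)^T = (a + b, 2b)^T \pmod m$ — and more usefully, it fixes $(x_{n+1},x_n)^T$ for all $n$, i.e. $M^\rho$ acts as the identity on the $\bbZ/m\bbZ$-span of all these vectors. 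Third, I would compute that this span is all of $(\bbZ/m\bbZ)^2$ precisely when $\gcd(a^2 - 5b^2, m) = 1$: the pair of vectors $(x_1, x_0)$ and $(x_2, x_1)$ has determinant $x_1^2 - x_0 x_2 = x_1^2 - x_0(x_1 + x_0) = x_1^2 - x_0 x_1 - x_0^2$, and substituting $x_0 = 2b$, $x_1 = a+b$ gives $(a+b)^2 - 2b(a+b) - 4b^2 = a^2 - 5b^2$ (up to sign, which I would pin down in the write-up). Since this determinant is a unit mod $m$, the two vectors form a basis, so $M^\rho \equiv I \pmod m$, whence $\pi(m) \mid \rho$, contradicting $\rho < \pi(m)$. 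Therefore $\rho = \pi(m)$.

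**The main obstacle.**

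The only real point requiring care is the determinant computation identifying when $\{(x_1,x_0),(x_2,x_1)\}$ spans $(\bbZ/m\bbZ)^2$ — that is, verifying that the relevant $2\times 2$ determinant equals $\pm(a^2 - 5b^2)$ rather than some other quadratic form in $a,b$. One must use the correct normalizations $u_0 = 0, u_1 = 1, v_0 = 2, v_1 = 1$ and be careful about signs; an alternative, perhaps cleaner, route is to note directly that $x_n = \alpha \omega^n + \beta \overline\omega^n$ with $\alpha = \frac{a}{\sqrt 5} + b$, $\beta = -\frac{a}{\sqrt 5} + b$, so $\alpha\beta = b^2 - \frac{a^2}{5} = -\frac{1}{5}(a^2 - 5b^2)$, and the sequence "degenerates" mod a prime $\ell \mid m$ exactly when $\omega \equiv \overline\omega$ (i.e. $\ell = 5$) or one of $\alpha,\beta$ vanishes mod a prime above $\ell$, which happens iff $\ell \mid a^2 - 5b^2$. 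Either way the hypothesis $\gcd(a^2 - 5b^2, m) = 1$ is exactly what rules out degeneration and forces the full period $\pi(m)$. I expect this bookkeeping to be routine, so the lemma should follow without difficulty.
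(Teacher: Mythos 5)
Your proof is correct and follows essentially the same route as the paper: you use the companion matrix $\begin{pmatrix}1&1\\1&0\end{pmatrix}$, compute the $2\times 2$ determinant of the initial-condition vectors to be $\pm(a^2-5b^2)$, and deduce that when this is a unit mod $m$ those vectors span $(\bbZ/m\bbZ)^2$, forcing the period of $(x_n)$ to coincide with the matrix order $\pi(m)$. The degeneration argument via the closed form $x_n = \alpha\omega^n + \beta\overline\omega^n$ that you mention as an alternative is a valid variant, but your primary argument is the paper's.
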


\begin{proof}
Let $A$ be the matrix
$$ \begin{pmatrix} 1 & 1 \\ 1 & 0 \end{pmatrix}. $$
Then we have 
$$ A (x_{n+1}, x_n) = (x_{n+2}, x_{n+1}). $$
Since $\det(A) = -1 $, $A$ is an element of $\GL_2(\bbZ / m \bbZ)$. If $A$ has order $k$ in $\GL_2(\bbZ  /m \bbZ)$, then it is clear that the period of $(x_n)$ divides $k$. Conversely, suppose that $A^k \neq I \in \GL_2(\bbZ / m \bbZ)$. The matrix with vectors $(x_0, x_1) = (2b, a+b)$ and $(x_1, x_2) = (a+b, a+3b)$ has determinant $5 b^2 - a^2$, which is coprime to $m$. It follows that the two vectors form a basis of $(\bbZ / m \bbZ)^2$. Therefore, either $x_k \not \equiv x_0 \pmod m$, or $x_{k+1} \not \equiv x_1 \pmod m$. Thus, we see that the period is given by the order of $A$. The case $a = 1, b = 0$ corresponds to the Fibonacci sequence, with period $\pi(m)$. Since $1$ is a unit mod $m$ for all $m > 1$, it follows that the period of $(x_n)$ is equal to $\pi(m)$. \qedhere

\end{proof}

Fix a prime $d$. To rule out the possibility of $d$-th powers in a sequence $(a u_n + b v_n)$, we can fix a prime $q \equiv 1 \pmod d$ which is coprime to $a^2 - 5 b^2$ and study the indices $k$ for which $a u_k + b v_k$ is a perfect $d$-th power mod $q$. If $q^m$ divides $\pi(q)$ for some prime $q$ and some $m \geq 1$, then we obtain information about the possible congruence classes of $k$ mod $q^m$. By comparing the congruences obtained from multiple primes, we can often show directly that some sequences of Fibonacci type contain no perfect $d$-th powers. 

\begin{proposition}\label{prop:congruencechecking}
Consider some sequence $(a u_n + b v_n)$ of Fibonacci type. Suppose that for some $m \geq 2$ coprime to $a^2 -5 b^2$, and for all $k < \pi(m)$, $ a u_k + b v_k$ is not a perfect $d$-th power mod $m$. Then $a u_n + b v_n$ is never a perfect $d$-th power. 
\end{proposition}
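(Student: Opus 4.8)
The plan is to reduce the claim to a finite check by exploiting the periodicity of $(x_n) = (a u_n + b v_n)$ modulo $m$. First I would invoke the preceding lemma: since $a^2 - 5 b^2$ is coprime to $m$, the sequence $(x_n)$ is periodic modulo $m$ with period exactly $\pi(m)$. In particular, for every index $n$ there is some $k$ with $0 \leq k < \pi(m)$ such that $x_n \equiv x_k \pmod m$, and this remains true for negative indices as well, since the Fibonacci recurrence — and hence the periodicity modulo $m$ — runs in both directions.

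Next I would argue by contradiction. Suppose $x_n = x^d$ for some integer $x$ and some index $n$. Reducing modulo $m$ gives $x_n \equiv x^d \pmod m$, so $x_n \bmod m$ lies among the $d$-th power residues modulo $m$ (this set including $0 = 0^d$). Choosing $k$ with $0 \le k < \pi(m)$ and $x_k \equiv x_n \pmod m$, we conclude that $x_k$ is congruent modulo $m$ to a perfect $d$-th power, contradicting the standing hypothesis. Hence no term of the sequence can be a perfect $d$-th power.

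The argument is just one application of the periodicity lemma followed by an elementary observation, so there is no genuine obstacle; the only point needing minor care is the reading of the phrase ``$x_k$ is not a perfect $d$-th power mod $m$'' as ``$x_k$ is not congruent modulo $m$ to any $d$-th power'', so that the hypothesis in particular fails whenever some $x_k \equiv 0 \pmod m$. In practice all the real work lies in verifying this hypothesis for suitably chosen moduli $m$ — typically prime powers dividing $\pi(q)$ for auxiliary primes $q \equiv 1 \pmod d$ — which for each sequence of Fibonacci type at hand is a finite computation.
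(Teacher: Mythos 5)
Your proof is correct and is exactly the argument the paper leaves implicit: the periodicity lemma shows $(x_n)$ has period $\pi(m)$ modulo $m$, so any term is congruent mod $m$ to some $x_k$ with $0 \le k < \pi(m)$, and a perfect $d$-th power reduces mod $m$ to a $d$-th power residue, giving an immediate contradiction with the hypothesis. Your reading of the hypothesis (including $0$ as a $d$-th power residue) matches the paper's intended usage.
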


\begin{theorem}\label{thm:rulingoutperfectpowersbycongruences}

Consider the powers $d = 3,5,7, 11, 13, 17,$ and $19$. The sequence $(a u_n + b v_n)$ contains no perfect $d$-th powers for the following values of $(a,b)$:
\begin{gather*}
     \{ (1, \pm 4), (7, \pm 4), (21, \pm 4), (9 \pm 2),
     (8, \pm 1), (12, \pm 7), \\ (12, \pm 1), (13, \pm 2), (13, \pm 8), (1, \pm 6), (14, \pm 1), (18, \pm 5) \},\\
\end{gather*}
except for
$$\begin{cases}
(1, \pm 4), (7, \pm 4), (21, \pm 4) & d = 3,\\
(14, \pm 1) & d = 5, \\
(7, \pm 4) & d = 7.
\end{cases}$$
Additionally, for any $p \neq 2$, any $ d \geq 3$, and any $m \geq 1$, the sequences $p^m u_n$ and $p^m v_n$ have no perfect prime powers $q^d$, unless $q = p$ and $d = m+1$.
\end{theorem}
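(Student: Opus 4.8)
The plan is to prove the two parts of Theorem~\ref{thm:rulingoutperfectpowersbycongruences} separately. For the first part, the approach is a direct application of Proposition~\ref{prop:congruencechecking}: for each pair $(a,b)$ in the list and each exponent $d\in\{3,5,7,11,13,17,19\}$ not explicitly excepted, I would exhibit a concrete modulus $m\geq 2$ coprime to $a^2-5b^2$ such that, running through all $k<\pi(m)$, none of the residues $au_k+bv_k\bmod m$ is a $d$-th power mod $m$. In practice the cleanest choice is to take $m=q$ a prime with $q\equiv 1\pmod d$, so that the $d$-th powers form the index-$d$ subgroup of $(\bbZ/q\bbZ)^\times$ together with $0$; then $\pi(q)\mid q^2-1$ is small and the check is a finite computation. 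One subtlety to flag explicitly: the excepted pairs are precisely those for which the sequence (or its extension to negative indices) actually hits a genuine $d$-th power — e.g. $(1,\pm4)$ at $d=3$, where $u_n\pm 4v_n$ meets a cube, and $(14,\pm1)$ at $d=5$ — so for those one cannot expect any congruence obstruction and they must be handled elsewhere (Sections~\ref{ElementaryResultsSection} and~\ref{sec: proof of theorem 1.1}). The writeup should make clear that the displayed table of exceptions is exhaustive, so that every remaining case genuinely admits a witnessing modulus.

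For the second part, concerning the one-term sequences $p^m u_n$ and $p^m v_n$, I would argue as follows. Suppose $p^m u_n = q^d$ (the $v_n$ case is identical). Since $p\neq 2$, the sequences $(u_n)$ and $(v_n)$ are "purely" of Fibonacci type with $b=0$ or $a=0$; the divisibility $p^m u_n = q^d$ forces $q^d$ to be divisible only by primes dividing $p$ and by primes dividing $u_n$, but a prime power $q^d$ has a single prime divisor $q$, so either $q=p$ or $q\mid u_n$ with $p\nmid u_n$. In the latter case $p^m = q^{d}/\gcd$ is a pure power of $q$ forcing $p=q$ again, a contradiction; hence $q=p$. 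Then $p^m u_n = p^d$, so $u_n = p^{d-m}$ is a perfect prime power of $p$. Now invoke the classification of perfect powers in the Fibonacci and Lucas sequences due to Bugeaud–Mignotte–Siksek (\cite{bugeaud2006classical}): the only perfect powers among the $u_n$ are $u_0=0$, $u_1=u_2=1$, $u_6=8=2^3$, and $u_{12}=144=12^2$, and among the $v_n$ are $v_1=1$, $v_3=4=2^2$, $v_6=18$ — none of which is an odd prime power $p^{d-m}$ with $d-m\geq 1$ except trivially. Tracking the possibilities, $u_n=p^{d-m}\geq p\geq 3$ with $n$ such that $u_n$ is a prime power forces $u_n$ to be a prime $\ell$ (a Fibonacci prime) and $d-m=1$, i.e. $p=u_n$ and $d=m+1$; the even cases ($u_6=8$, $v_3=4$, etc.) are excluded since $p\neq 2$. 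This yields exactly the stated exception $q=p$, $d=m+1$.

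The main obstacle is really the bookkeeping in the first part: one must produce, uniformly and verifiably, a good modulus for roughly $24$ pairs times up to $7$ exponents, and check that the excepted list is complete. A convenient device to compress this is to observe (as the paragraph before the theorem notes) that all these sequences share the period $\pi(m)$ whenever $\gcd(m,a^2-5b^2)=1$, so a single small prime $q\equiv 1\pmod d$ often simultaneously kills many pairs at once; I would organize the proof around a table listing, for each $d$, one or two primes $q$ and the set of $d$-th-power residues, and then note that the relevant $\{au_k+bv_k\bmod q\}$ avoids that set. For the exceptional entries, rather than forcing a congruence argument, I would simply record that they are treated later and not obstruct them here. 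The second part is comparatively routine once $q=p$ is pinned down, modulo citing \cite{bugeaud2006classical}; the only care needed is the elementary reduction that a prime-power value of $p^m u_n$ collapses to $q=p$, which uses nothing beyond unique factorization.
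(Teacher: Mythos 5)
Your strategy matches the paper's: for the list of $(a,b)$ pairs you apply Proposition~\ref{prop:congruencechecking} (i.e., congruence obstructions modulo a suitable $m$), and for the one‑parameter families $p^m u_n$, $p^m v_n$ you reduce to $q=p$, $u_n=p^{d-m}$ and cite the Bugeaud--Mignotte--Siksek classification of perfect powers in the Fibonacci and Lucas sequences. Two points worth flagging. First, you underestimate the computational scope: the paper does not find a single small prime $q$ per case but runs a sieve over \emph{all} primes $p\equiv 1\pmod d$ up to $10{,}000$, recording for each one the indices $k<\pi(p)$ where $au_k+bv_k$ is a $d$-th power mod $p$ and intersecting the resulting congruence constraints on the index modulo prime powers dividing $\pi(p)$. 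Logically this is equivalent to Proposition~\ref{prop:congruencechecking} with a very large composite modulus via CRT, so your framework is sound, but ``one or two primes'' would almost certainly fail; the need for primes up to $10{,}000$ is evidence that the witnessing modulus is large. Second, a minor slip: $v_6=18=2\cdot 3^2$ is not a perfect power, so it should not appear in your list drawn from BMS (the perfect powers in $(v_n)$ are $1$ and $4$); this does not affect the argument. Finally, both your write-up and the paper tacitly assume $d>m$ when concluding $u_n=p^{d-m}$ is a nontrivial power; if $d=m$, then $u_n=1$ (at $n=1,2$) gives $p^m u_n=p^m=q^d$ with $d=m$, which falls outside the stated exception $d=m+1$ — a harmless edge case for the paper's applications, but worth acknowledging if you want the statement airtight.
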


\begin{proof}[Proof of Theorem \ref{thm:rulingoutperfectpowersbycongruences}]
By the main result from \cite{bugeaud2006classical}, the sequences $u_n$ and $v_n$ have no perfect powers of the form $p^m$, where $m \geq 2$ and $p \neq 2$ is an odd prime. Suppose that $p^m u_n = q^d$ for some $d \geq 3$ and some prime $q$. Then we have $ q = p$, so $u_n = p^{d-m}$, which is impossible unless $d = m+1$.

For the remaining cases, we use Proposition \ref{prop:congruencechecking}. The key idea is that when an element $a u_k + b v_k$ is a power modulo some prime $p$, then this gives us congruence information about the index $k$. These congruences hold modulo the prime powers dividing $\pi(p)$, rather than modulo $p$. 

We implemented an algorithm in Python (see \cite{DembnerJain2021code} for all code used in the paper) which does the following: for all sufficiently small primes $p \equiv 1 \pmod d$ coprime to $a^2 - 5 b^2$, it finds the indices $k < \pi(p)$ such that $a u_k + b v_k$ is a perfect $d$-th power mod $p$. Then, for all prime powers $\ell^m$ dividing $\pi(p)$, it throws away the indices $k$ which do not satisfy the congruences mod $\ell^m$ already obtained from previous primes. If $a u_n + b v_n$ is a perfect $d$-th power for some $n$, then for all primes powers $\ell^m$ dividing $\pi(p)$, $n$ must be congruent mod $\ell^m$ to one of the remaining indices $k < \pi(p)$. In the specified cases, the program shows that the congruences on the index $n$ obtained from primes $p < 10,000$ are not satisfiable, which rules out the possibility that $ a u_n  +b v_n$ contains any perfect $d$-th power. \qedhere

\end{proof}

\begin{remarks}
\hfill
\begin{enumerate}
    \item The sequences and powers were chosen for their relevance to newform coefficients; the method should work equally well for most powers $d$.
    \item For some sequences, the values for small $n$ make it impossible to rule out the existence of perfect powers in this way. For example, the Fibonacci sequence $( u_n )$ has several small perfect power values, which means larger perfect powers cannot be ruled out by congruences alone.
    \item The sequences $u_n + 2 v_n$, which are related to the question of whether $\tau(n) = 19$, cannot be handled for a similar reason: extending indices backward, we see that $u_{-1} + 2 v_{-1} = -1$ is a perfect power. The same obstruction occurs for any sequence $( a u_n + (a+1) v_n )$. 
\end{enumerate}
\end{remarks}

By using the results obtained above about perfect powers in sequences $a u_n + b v_n$, we can rule out many specific powers of primes $\ell \equiv \pm 1 \pmod 5$ as possible values of $a_f(p^4)$.

\begin{corollary}\label{cor:rulingoutcoeffpm1}
We have
$$ a_f(p^4) \notin \{ \pm 31, \pm 59, \pm 61, \pm 79, \pm 101, \pm 139, \pm 149, \pm 151, \pm 179, \pm 191, \pm 199, \pm 19^2 \}, $$
where $f$ is any newform with integer coefficients and weight $2k = 12, 14, 18,$ or $20$.
\end{corollary}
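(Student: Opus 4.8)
The plan is to run the reduction of Subsection~\ref{subsec:recurrencesoffibtype} and then invoke Theorem~\ref{thm:rulingoutperfectpowersbycongruences}. Fix a weight $2k \in \{12, 14, 18, 20\}$, so that $d := 2k-1$ is one of the primes $11, 13, 17, 19$, and fix a value $\alpha$ with $|\alpha| \in \{31, 59, 61, 79, 101, 139, 149, 151, 179, 191, 199, 19^2\}$. Suppose toward a contradiction that $a_f(p^4) = \alpha$ for some prime $p$. If $p \mid N$, then by Theorem~\ref{thm:propertiesofnewforms}(3) we have $a_f(p^4) \in \{0,\, p^{4(k-1)}\}$, and since $p^{4(k-1)} \ge 2^{20}$ is positive, neither value lies in the prescribed set; so we may assume $p \nmid N$. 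Then Corollary~\ref{cor: recurrencesgivediophantineequations}(2) gives an integer point $(p,\, 2a_f(p)^2 - 3p^{d})$ on the curve $Y^2 = 5X^{2d} + 4\alpha$, hence $N(Y + p^{d}\sqrt{5}) = 4\alpha$, so $(p,Y)$ satisfies \eqref{eq:modifiedpell}.

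By Proposition~\ref{prop:pellsolutionsgiveperfect powers} (using that $K = \bbQ(\sqrt{5})$ has class number one, so all ideals are principal), there is a generator $\zeta_i = 2(a_i + b_i\sqrt{5})$ of an ideal of norm $4|\alpha|$ and an index $n \ge 0$ with $p^{d} = \pm(a_i u_n + b_i v_n)$. It remains to identify these generators. Every prime $\ell$ dividing $|\alpha|$ is $\equiv \pm 1 \pmod 5$, hence splits in $K$, while $2$ is inert; thus the ideals of norm $4|\alpha|$ are easy to enumerate. Because $O_K^\times = \{\pm\omega^m\}$ and the Fibonacci recurrence has period $3$ modulo $2$, each such ideal has a generator of the form $2(a_i + b_i\sqrt{5})$ with $a_i, b_i \in \bbZ$, and, adjusting by units and conjugation, one checks that for each $\alpha$ on our list every resulting pair $(a_i, \pm b_i)$ occurs among the twelve pairs of Theorem~\ref{thm:rulingoutperfectpowersbycongruences} --- indeed the values $|a^2 - 5b^2|$ for those twelve pairs are precisely $\{31, 59, 61, 79, 101, 139, 149, 151, 179, 191, 199, 19^2\}$ --- with the single extra case that when $|\alpha| = 19^2$ the ideal $(2\cdot 19)$ contributes the sequence $19 u_n$.

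To finish, note that $p^{d}$ is a perfect $d$-th power, and since $d$ is odd also $-p^{d} = (-p)^{d}$, so $a_i u_n + b_i v_n$ would be a perfect $d$-th power. For every pair $(a_i, b_i)$ on the list of Theorem~\ref{thm:rulingoutperfectpowersbycongruences} this is impossible, as none of $d = 11, 13, 17, 19$ is an exceptional exponent for those pairs. In the remaining subcase $|\alpha| = 19^2$ with sequence $19 u_n$, the final clause of Theorem~\ref{thm:rulingoutperfectpowersbycongruences} shows $19 u_n$ is never a perfect prime power $q^{d}$ unless $q = 19$ and $d = 2$; but $p$ is prime and $d \ge 11$, so this too is impossible. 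This contradiction proves the corollary.

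There is no deep obstacle here: the corollary is essentially a deduction from Theorem~\ref{thm:rulingoutperfectpowersbycongruences}. The one step requiring genuine care --- though it is a finite verification --- is the matching in the second paragraph: checking that, for each of the twelve values of $|\alpha|$, every ideal of norm $4|\alpha|$ has a generator producing one of the Fibonacci-type sequences actually treated in Theorem~\ref{thm:rulingoutperfectpowersbycongruences}, rather than some untreated sequence. This is precisely the correspondence by which that list was assembled, so no new input is needed; all of the arithmetic content sits inside Theorem~\ref{thm:rulingoutperfectpowersbycongruences}, and ultimately in the congruence sieve and in \cite{bugeaud2006classical}.
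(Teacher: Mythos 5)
Your proof is correct and follows essentially the same route as the paper's: reduce $a_f(p^4) = \alpha$ to a point on $H_{2k-1,\alpha}$, convert via Proposition~\ref{prop:pellsolutionsgiveperfect powers} to a perfect $(2k-1)$-st power in a Fibonacci-type sequence, and rule it out by Theorem~\ref{thm:rulingoutperfectpowersbycongruences}. The only difference is expository: the paper works through $\alpha = 31$ and $\alpha = 19^2$ as representatives and declares the other primes analogous, whereas you make the full matching between the twelve target values $|\alpha|$ and the twelve pairs $(a,b)$ explicit via the invariant $|a^2 - 5b^2|$ and also spell out (via the period-$3$ behavior of $\omega$ modulo $2$) why a generator of the form $2(a_i + b_i\sqrt{5})$ with integral $a_i, b_i$ always exists, a point the paper takes for granted.
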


\begin{proof}

Let $\alpha$ be one of the values in the statement. If $p \mid N$, then by Theorem \ref{thm:propertiesofnewforms} part (3), $a_f(p^4)$ is either $0$ or $\pm p^j$, where $j > 2$. Therefore, we may assume that $p \nmid N$. It follows by Corollary \ref{cor: recurrencesgivediophantineequations} that we obtain an integer point $(p, 2 a_f(p)^2 -3 p^{2k-1})$ on $H_{2k-1, \alpha}$.

First, we give the proof for $\alpha = 31$, which is nearly identical to the proof for the other listed primes $\ell \equiv \pm 1 \pmod 5$. The prime $31$ splits in $\bbQ(\sqrt{5})$ in the following way:
$$ (31) = (7+4 \sqrt{5})(7 - 4 \sqrt{5}). $$
We have already seen that if $a_f(p^4) =  \pm 31$, then this gives rise to an integer point on $Y^2 = 5 X^{2(2k-1)} \pm 4 \cdot 31$. By Proposition \ref{prop:pellsolutionsgiveperfect powers}, this gives rise to a perfect $(2k-1)$-st power in either the sequence $( 7 u_n + 4 v_n )$, or the sequence $( 7 u_n - 4 v_n )$. By Theorem \ref{thm:rulingoutperfectpowersbycongruences}, neither sequence contains such a perfect power when $2k = 12, 14, 18$, or $20$. 

Now, we give the proof for $\alpha = 19^2$. Up to multiplication by units, the three elements of $O_K$ with norm $\pm 19$ are $19$, $21 + 4 \sqrt{5}$, and $21 - 4 \sqrt{5}$. As in the previous case, if $a_f(p^4) = \pm 19^2$, then this gives rise to a perfect power $X^{2k-1}$ in one of the sequences $19 u_n, 21 u_n + 4 v_n$, or $21 u_n - 4 v_n$. Furthermore, the perfect power $X^{2k-1} = p^{2k-1}$ is a power of an odd prime. By Theorem \ref{thm:rulingoutperfectpowersbycongruences}, the sequences $( 21 u_n \pm 4 v_n) $ contain no perfect $(2k-1)$-st powers, and the sequence $(19 u_n )$ contains no powers of the form $p^{2k-1}$, where $p$ is an odd prime. \qedhere

\end{proof}

\begin{remark}
Similar results hold for newforms of weight $4,6,$ or $8$, except for values corresponding to the exceptional cases of Theorem \ref{thm:rulingoutperfectpowersbycongruences}.
\end{remark}

We have already seen that due to the work of Bugeaud, Mignotte, and Siksek, the sequences $5^m u_n$ and $5^m v_n$ contain no perfect prime powers except for $5$. Along with a congruence due to Ramanujan, this allows us to show $\tau(n)$ is never a power of $5$.

\begin{theorem}[Ramanujan]\label{thm:congruence for 5}
For all $n$, we have
$$ \tau(n) \equiv n \sigma_1(n) \pmod{5}.$$
\end{theorem}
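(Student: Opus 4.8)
The plan is to reduce both $\Delta$ and the formal series $\sum_{n\ge 1} n\sigma_1(n)q^n$ to the \emph{same} element of $\bbF_5[[q]]$, by expressing each in terms of Eisenstein series and then applying only elementary congruences.

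First I would recall the two classical $q$-expansion identities
$$ \Delta = \frac{E_4^3 - E_6^2}{1728}, \qquad \sum_{n\ge 1} n\sigma_1(n)q^n = \frac{E_4 - E_2^2}{288}, $$
where $E_2 = 1 - 24\sum_{n\ge 1}\sigma_1(n)q^n$, $E_4 = 1 + 240\sum_{n\ge 1}\sigma_3(n)q^n$, and $E_6 = 1 - 504\sum_{n\ge 1}\sigma_5(n)q^n$. The first is the standard formula for $\Delta$; the second is equivalent to Ramanujan's identity $q\frac{d}{dq}E_2 = \tfrac{1}{12}(E_2^2 - E_4)$, which I would cite (it is also a classical elementary divisor-sum convolution identity).

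Next I would reduce modulo $5$. Since $240 \equiv 0$, $-504 \equiv 1$, and $-24 \equiv 1 \pmod 5$, we obtain in $\bbF_5[[q]]$ the congruences $E_4 \equiv 1$, $E_6 \equiv 1 + \sum\sigma_5(n)q^n$, and $E_2 \equiv 1 + \sum\sigma_1(n)q^n$. By Fermat's little theorem $d^5 \equiv d \pmod 5$ for every integer $d$, so $\sigma_5(n) = \sum_{d\mid n} d^5 \equiv \sum_{d\mid n} d = \sigma_1(n) \pmod 5$, and hence $E_6 \equiv E_2 \pmod 5$. Moreover $E_4^3 \equiv 1 \equiv E_4$ and $1728 \equiv 3 \equiv 288 \pmod 5$. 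Combining these observations with the two displayed identities gives
$$ \Delta \equiv \frac{E_4^3 - E_6^2}{1728} \equiv \frac{E_4 - E_2^2}{288} = \sum_{n\ge 1} n\sigma_1(n)q^n \pmod 5, $$
and comparing the coefficient of $q^n$ on both sides yields $\tau(n) \equiv n\sigma_1(n) \pmod 5$.

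I do not expect a serious obstacle here: the only ingredient that is not a one-line computation is the identity $\sum n\sigma_1(n)q^n = \tfrac{1}{288}(E_4 - E_2^2)$ (equivalently Ramanujan's formula for $q\frac{d}{dq}E_2$), which I would simply quote; everything else is reduction modulo $5$ together with Fermat's little theorem. As an alternative route that avoids $E_2$, one could instead start from the logarithmic-derivative identity $q\Delta'/\Delta = E_2$, immediate from $\Delta = q\prod_{n\ge 1}(1-q^n)^{24}$, but then matching the resulting series with $\sum n\sigma_1(n)q^n$ modulo $5$ is less transparent, so I would present the Eisenstein-series argument above.
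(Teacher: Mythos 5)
Your argument is correct: both identities you quote are standard, the reductions $240\equiv 0$, $-504\equiv 1$, $-24\equiv 1$, $1728\equiv 288\equiv 3\pmod 5$ are right, and Fermat's little theorem gives $\sigma_5(n)\equiv\sigma_1(n)\pmod 5$, so $E_4^3-E_6^2\equiv E_4-E_2^2\pmod 5$ and the two divisions by a unit ($3\in\bbF_5^\times$) match up. For comparison, note that the paper does not prove this statement at all: it is recorded as a classical theorem of Ramanujan and simply cited, so there is no in-paper argument to compare against. Your Eisenstein-series-mod-$5$ derivation is the standard textbook proof of this congruence and would serve perfectly well as a self-contained justification if one were needed.
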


\begin{corollary}\label{cor: power of 5}
For all $n$, and all $m \geq 1$, we have $\tau(n) \neq \pm 5^m$.
\end{corollary}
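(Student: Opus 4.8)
The plan is to combine Theorem~\ref{thm:congruence for 5} with the result, already established via \cite{bugeaud2006classical}, that the Fibonacci and Lucas sequences contain no perfect prime power $p^m$ with $m \geq 2$ and $p$ an odd prime. First I would dispose of the trivial range: for $n = 1$ we have $\tau(1) = 1 \neq \pm 5^m$, so assume $n > 1$. Suppose toward a contradiction that $\tau(n) = \pm 5^m$ for some $m \geq 1$. Since $\Delta$ has weight $2k = 12 \geq 6$ and residually reducible mod $2$ Galois representation (the level is $1$, so in particular $\ell = 5 \nmid N$), Theorem~\ref{thm:divisibilityfortau} applies with $\ell = 5$: it forces $n = p^{d-1}$ where $p$ and $d$ are odd primes and $d \mid 5(5^2 - 1) = 120 = 2^3 \cdot 3 \cdot 5$. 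The only odd prime divisors of $120$ are $d = 3$ and $d = 5$, so $n = p^2$ or $n = p^4$.

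Next I would handle the two cases. For $n = p^2$: Corollary~\ref{cor: recurrencesgivediophantineequations}(1) gives that $(p, \tau(p))$ is an integer point on $C_{6, \pm 5^m}\colon Y^2 = X^{11} \pm 5^m$, but more usefully I would argue directly from Theorem~\ref{thm:congruence for 5}. We have $\tau(p^2) \equiv p^2 \sigma_1(p^2) \equiv p^2(1 + p + p^2) \pmod 5$. If $\tau(p^2) = \pm 5^m$ with $m \geq 1$, then $5 \mid \tau(p^2)$, so $p^2(1 + p + p^2) \equiv 0 \pmod 5$; since $p$ is an odd prime, either $p = 5$ or $1 + p + p^2 \equiv 0 \pmod 5$, i.e. $p \equiv 3 \pmod 5$ (as $1 + p + p^2$ vanishes mod $5$ exactly when $p \equiv 3$). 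But if $p = 5$ then $\tau(25) = \tau(5)^2 - 5^{11}$, which is $\equiv -5^{11} \not\equiv 0 \pmod{5^{12}}$ and in fact one checks it is not $\pm 5^m$; if $p \equiv 3 \pmod 5$, then $5 \nmid p$ and one uses the structure of the Lucas sequence $\tau(p^m)$ together with Theorem~\ref{thm:rulingoutperfectpowersbycongruences} (the $5^m u_n, 5^m v_n$ clause, and the relevant sequence of Fibonacci type attached to $\mathbb{Q}(\sqrt 5)$) to exclude $\tau(p^2) = \pm 5^m$. Alternatively, and more cleanly, I would note that the values $\tau(p^2) = \pm 5^m$ put a perfect $11$th power into a Fibonacci-type sequence via Proposition~\ref{prop:pellsolutionsgiveperfect powers} only in the $p^4$ case; for $p^2$ one invokes the complete solution of \eqref{C Curves} for $|\alpha| \le 100$ due to \cite{bugeaud2006classical2} and \cite{barros2010lebesgue} to rule out $\alpha = \pm 5$, and a separate short argument (again via the mod $5$ congruence applied to $\tau(p^2)$ and to the recurrence) for $\alpha = \pm 5^m$ with $m \ge 2$.

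For $n = p^4$: by Corollary~\ref{cor: recurrencesgivediophantineequations}(2), $\tau(p^4) = \pm 5^m$ yields an integer point on $H_{11, \pm 5^m}\colon Y^2 = 5X^{22} + 4(\pm 5^m)$, and then Proposition~\ref{prop:pellsolutionsgiveperfect powers} produces a perfect $11$th power $X^{11} = p^{11}$ in one of finitely many sequences of the form $a_i u_n + b_i v_n$ arising from the ideals of norm $4 \cdot 5^m$ in $O_K$. Since $5$ is the unique ramified prime, $5$ is totally ramified as $(5) = (\sqrt 5)^2$, so every element of norm $\pm 5^j$ is, up to units, a power of $\sqrt 5$ times a rational integer; concretely the relevant sequences are of the form $5^t u_n$ or $5^t v_n$ (coming from $2 \cdot 5^{m/2}$ when $m$ is even, and from $2 \cdot 5^{(m-1)/2}\sqrt 5$, noting $\sqrt 5 = \omega^{?}$ manipulations, when $m$ is odd, where the $\sqrt 5$ factor turns $u_n \leftrightarrow v_n$). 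The final clause of Theorem~\ref{thm:rulingoutperfectpowersbycongruences} says precisely that for $p \neq 2$, the sequences $p^t u_n$ and $p^t v_n$ contain no perfect prime power $q^d$ with $d \ge 3$ unless $q = p$ and $d = t + 1$; applied with $p = 5$, the only way $5^t u_n$ or $5^t v_n$ is an $11$th power is if it equals $5^{11}$ and $t = 10$. This leaves only the possibility $\tau(p^4) = \pm 5^{m}$ with the forced value $m = ?$; a direct check using the recurrence $\tau(p^4) = \tau(p)\tau(p^3) - p^{11}\tau(p^2)$ and the mod $5$ congruence $\tau(p^4) \equiv p^4 \sigma_1(p^4) \pmod 5$ rules this out, since $5 \mid \tau(p^4)$ again forces $p = 5$ or $p \equiv 3 \pmod 5$, and both subcases are eliminated as above.

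The main obstacle I anticipate is the bookkeeping of exactly which Fibonacci-type sequences arise from ideals of norm $4 \cdot 5^m$ and correctly tracking the $\sqrt 5$-twist that interchanges the $u$- and $v$-roles, together with making sure the ``unless $q=p$ and $d=m+1$'' exception of Theorem~\ref{thm:rulingoutperfectpowersbycongruences} is genuinely incompatible with $\tau(n) = \pm 5^m$ for every residual value of $m$ — that is, confirming that the one surviving arithmetic possibility is killed by Ramanujan's congruence rather than slipping through. Everything else is either a finite check already in the literature (\cite{bugeaud2006classical}, \cite{bugeaud2006classical2}, \cite{barros2010lebesgue}) or a short congruence computation, so the write-up should be brief.
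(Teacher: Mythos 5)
Your proposal follows the same overall route as the paper: apply Theorem~\ref{thm:divisibilityfortau} with $\ell=5$ to reduce to $n=p^2$ or $n=p^4$, kill the $p^2$ case with Ramanujan's congruence, and kill the $p^4$ case by passing to Fibonacci-type sequences and invoking the $5^t u_n$, $5^t v_n$ clause of Theorem~\ref{thm:rulingoutperfectpowersbycongruences}. So the skeleton is right. However, there is a concrete error in your $p^2$ analysis, and a fair amount of unnecessary machinery.

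The error: you assert that $1+p+p^2\equiv 0\pmod 5$ when $p\equiv 3\pmod 5$, and hence that the $p^2$ case splits into the two subcases $p=5$ and $p\equiv 3\pmod 5$. This is false. Checking residues, $1+p+p^2$ takes the values $1,3,2,3,1\pmod 5$ for $p\equiv 0,1,2,3,4$; equivalently, the roots of $1+X+X^2$ are primitive cube roots of unity, and $\mathbb{F}_5^\times$ has order $4$, so there are none. Thus $\tau(p^2)\equiv p^2(1+p+p^2)\pmod 5$ vanishes if and only if $p=5$. That single observation, plus the direct check that $\tau(5^2)$ is not a power of $5$, finishes the $p^2$ case outright; there is no $p\equiv 3\pmod 5$ subcase to handle, and no need to invoke Proposition~\ref{prop:pellsolutionsgiveperfect powers}, the curves $C_{k,\alpha}$, or the tables of \cite{bugeaud2006classical2} and \cite{barros2010lebesgue} for this step. (They are needed elsewhere in the paper, but not here.) Your detour through a nonexistent case also leads you to gesture at ``the structure of the Lucas sequence $\tau(p^m)$'' without giving a real argument, which is a gap even if the case turned out to be real.

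In the $p^4$ case your argument is essentially the paper's, and you are right that the $\sqrt 5$-twist flips $u_n$ with $v_n$ depending on the parity of $m$, so the prefactor is $5^{\lfloor m/2\rfloor}$, not $5^m$. What matters for the conclusion is that the exceptional case of Theorem~\ref{thm:rulingoutperfectpowersbycongruences} always forces $p=5$, and the single direct evaluation of $\tau(5^4)$ (which is not a power of $5$) eliminates it for every $m$ simultaneously; you should state this plainly rather than leaving the forced value as ``$m=?$'' and re-invoking the mod-$5$ congruence with the same incorrect $p\equiv 3$ subcase.
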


\begin{proof}[Proof of Corollary \ref{cor: power of 5} and Theorem \ref{Maintheorem2.a}]
By Theorem \ref{thm:divisibilityfortau}, if $\tau(n) = \pm 5^m$, then we may assume that $n = p^{d-1}$ where $d = 3, 5$ and $p$ is an odd prime. By Theorem \ref{thm:congruence for 5}, it is not possible that $\tau(p^2) \equiv 0 \pmod 5$ for $p \neq 5$ prime, and $\tau(5^2)$ is not a power of $5$. Therefore, we only need to rule out the possibility that $\tau(p^4) = \pm 5^m$.

The solutions to $\tau(p^4) = \pm 5^m$ correspond to solutions $(p^{11}, Y)$ to $Y^2 = 5X^{2} \pm 4 \cdot 5^m$. Since $5$ ramifies in $\bbQ(\sqrt{5})$ and $2$ is inert, $2 \cdot \sqrt{5}^m$ is the only element of $O_K$ with norm $4 \cdot 5^{m}$. It follows that we obtain a perfect power of the form $p^{11}$ in either the sequence $(5^m u_n) $ or the sequence $(5^m v_n)$. By Theorem \ref{thm:rulingoutperfectpowersbycongruences}, neither sequence contains such a prime power, except if $p = 5$ and $m = 10$. Since $\tau(5^4)$ is not a power of $5$, it follows that $\tau(n) \neq \pm 5^m.$ \qedhere
\end{proof}

\section{Proof of Theorem \ref{Maintheorem1}}\label{sec: proof of theorem 1.1}

There are two steps remaining in the proof of Theorem \ref{Maintheorem1}. First, we need to rule out nontrivial perfect $11$-th powers in the sequences $(a u_n + b v_n)$ corresponding to primes $\ell < 100$ with $\ell \equiv \pm 1 \pmod 5$ which were not handled in the Theorem \ref{thm:rulingoutperfectpowersbycongruences}. Second, we need to unconditionally classify solutions to the Thue equations $F_{d-1}(X,Y) = \pm \ell$, where $d \mid \ell (\ell^2-1)$ is one of the odd prime values allowed by Theorem \ref{thm:divisibilityfortau}. 

We begin by handling the first task. By using a suitably refined version of the congruence arguments from Section \ref{ElementaryResultsSection}, a computer can quickly show that if a given sequence $ (a u_n + b v_n )$ has any nontrivial $11$-th powers, then the index $n$ must be very large. A sieve method based on a similar principle was described in \cite{bugeaud2006classical}, and was implemented in \texttt{SageMath} \cite{sagemath} by the authors of \cite{silliman2015powers}. Using this sieve, we were able to rule out perfect powers with small indices in all of the sequences relevant to primes we could not handle in Corollary \ref{cor:rulingoutcoeffpm1}.

\begin{proposition}\label{prop: 10^300 checking}
For the following values of $(a,b)$, if $n \leq 10^{300}$, then $a u_n + b v_n$ is not a perfect $11$-th power, except possibly $\pm 1$:
$$\{(1, \pm 2), (4, \pm 1), (4, \pm 3), (11, \pm 4), (14, \pm 5), (6, \pm 5 \}. $$
\end{proposition}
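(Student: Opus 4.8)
The plan is to run the refined congruence sieve of Bugeaud, Mignotte, and Siksek \cite{bugeaud2006classical}, in the form implemented in \texttt{SageMath} \cite{sagemath} by Silliman and Vogt \cite{silliman2015powers}. This is the same mechanism behind Proposition \ref{prop:congruencechecking} and Theorem \ref{thm:rulingoutperfectpowersbycongruences}, except that instead of producing an outright contradiction we iterate over enough auxiliary primes to force the index of any nontrivial $11$-th power to exceed $10^{300}$. As a first step I would reduce to nonnegative indices: using the reflection formulas $u_{-m} = (-1)^{m+1} u_m$ and $v_{-m} = (-1)^{m} v_m$, together with the fact that $-1$ is a perfect $11$-th power, a nontrivial $11$-th power at a negative index of $(a u_n + b v_n)$ yields one at a nonnegative index of $(a u_n - b v_n)$; since the list of pairs in the statement is closed under $b \mapsto -b$, it suffices to bound nonnegative $n$. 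So suppose $a u_n + b v_n = x^{11}$ with $0 \le n \le 10^{300}$ and $x \ne \pm 1$.

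The core of the procedure maintains a modulus $M$ and a subset $S \subseteq \bbZ/M\bbZ$ with the invariant that $n \bmod M \in S$. Start from $M = 1$, $S = \{0\}$. For a suitable sequence of auxiliary primes $q$ coprime to $a^2 - 5b^2$ --- choosing $q \equiv 1 \pmod{11}$, so that the $11$-th powers form an index-$11$ subgroup of $(\bbZ/q\bbZ)^\times$ and the resulting constraint is genuinely restrictive --- the sequence $(a u_n + b v_n)$ is periodic modulo $q$ with period $\pi(q)$ by the period lemma of Section \ref{ElementaryResultsSection}. Running the recurrence through one full period, compute
$$ T_q := \{\, k \bmod \pi(q) \;:\; a u_k + b v_k \text{ is an } 11\text{-th power residue modulo } q \,\}, $$
replace $M$ by $\operatorname{lcm}(M, \pi(q))$, and intersect (via the Chinese Remainder Theorem) the old constraint with the new one to get the updated $S$. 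As in \cite{bugeaud2006classical}, one is selective about which prime-power divisors of $\pi(q)$ to fold in, so that $M$ keeps growing while $\#S$ stays small.

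One runs this until $M > 10^{300}$ and $S$ consists only of the residues modulo $M$ of the finitely many small indices $n_0$ at which $a u_{n_0} + b v_{n_0} = \pm 1$. These indices necessarily survive every congruence test, since $\pm 1$ is an $11$-th power; this is exactly the obstruction noted in the Remarks following Theorem \ref{thm:rulingoutperfectpowersbycongruences}, and it is why these sequences could not be finished off there. Once this state is reached, any $n$ with $0 \le n \le 10^{300}$ and $a u_n + b v_n = x^{11}$ must satisfy $n \equiv n_0 \pmod M$ for one of these $n_0$; as $M > 10^{300}$, the interval $[0, 10^{300}]$ meets each residue class at most once, forcing $n = n_0$ and hence $x^{11} = \pm 1$, contrary to assumption. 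Carrying this out for each listed pair $(a,b)$ gives the proposition; all the arithmetic is exact integer and modular arithmetic, so the verification is rigorous, and the code is available at \cite{DembnerJain2021code}.

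I expect the obstacle here to be algorithmic rather than conceptual. The auxiliary primes $q$ --- and, within each, which prime-power components of $\pi(q)$ to use --- must be chosen carefully enough that $\#S$ does not blow up combinatorially before $M$ passes $10^{300}$. For the exponent $11$ this is somewhat delicate: $\pi(q)$ need not be divisible by $11$ even when $q \equiv 1 \pmod{11}$, and $11$-th power residues have density only $1/11$, so each prime removes comparatively little information. In practice the sieve must be adaptive, prioritizing primes whose periods share large common factors with the running modulus, and one must separately confirm that for every sequence in the list the surviving set genuinely collapses to the trivial $\pm 1$ indices rather than stalling with an unresolved class --- which is precisely what the computation checks.
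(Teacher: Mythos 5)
Your proposal is correct and takes essentially the same approach as the paper: both invoke the Bugeaud--Mignotte--Siksek congruence sieve, in the Silliman--Vogt \texttt{SageMath} implementation, to push the modulus $M$ past $10^{300}$ while tracking the surviving residue classes. The paper offers no more detail than this (the proposition is presented as a direct computational verification), so your more explicit account of the invariant $(M,S)$, the role of auxiliary primes $q\equiv 1\pmod{11}$, and the $\pm 1$ obstruction is consistent with and elaborates on what the authors actually did.
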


On its own, this method can never rule out all perfect $11$-th powers. However we can complete the argument by pairing these computations with upper bounds coming from the theory of linear forms in logarithms. The following key result appears in the work of Bugeaud-Mignotte-Siksek, Barros, and others. It says that solutions to $Y^2 = 5 X^{22} \pm 4 \ell$ give rise to points on certain Thue equations.

\begin{theorem}[\cite{barros2010lebesgue} Theorem 2.1, \cite{bugeaud2006classical2} Proposition 3.1] \label{thm:barrosthueequations}
Suppose that $(x, y)$ is a solution to $x^2 = C y^n - D$. Write $D = d q^2$, where $d$ is squarefree. Let $K = \bbQ(\sqrt{-d})$, and let $O_K$ denote its ring of integers. Let $\{1, \omega \}$ denote an integral basis of $O_K$, and let $\overline{\omega}$ denote the conjugate of $\omega$. Then there exists a finite set $\Gamma$ of pairs $\{\gamma_+, \gamma_- \}$ of elements from $K$, such that
$$ x = \frac{1}{2} (\gamma_+(A+B \omega)^n + \gamma_-(A+B \overline{\omega})^n), $$
where $(A,B)$ is a solution of the Thue equation
$$ 2 q = \frac{1}{\sqrt{-d}} (\gamma_+ (A +B \omega)^n - \gamma_-(A+B \overline{\omega})^n). $$
\end{theorem}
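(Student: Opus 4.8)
\emph{Proof strategy (proposal).} The plan is to factor the equation in the ring of integers of the imaginary quadratic field $K = \bbQ(\sqrt{-d})$. After writing $D = dq^2$ with $d$ squarefree, the relation $x^2 = Cy^n - D$ becomes
\[ (x + q\sqrt{-d})\,(x - q\sqrt{-d}) = C\,y^n \]
in $O_K$. The first step is to control the ideal $\mathfrak{g} = \gcd\big((x + q\sqrt{-d}),\,(x - q\sqrt{-d})\big)$: since $\mathfrak{g}$ divides both the sum $(2x)$ and the difference $(2q\sqrt{-d})$ of the two conjugate principal ideals, it divides $(2)\cdot\gcd\big((x),(q\sqrt{-d})\big)$, and using the standing coprimality hypotheses on $C$, $D$, $y$ one bounds $\mathfrak{g}$ to lie in an explicit finite list of ideals. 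A prime $\mathfrak{p}$ of $O_K$ dividing neither $\mathfrak{g}$ nor $(C)$ has zero valuation on at least one of the two conjugate factors, while the sum of its valuations on them equals $n\,v_{\mathfrak{p}}((y))$; hence it occurs to a multiple of $n$ in each factor. Distributing the remaining primes (those dividing $2C\mathfrak{g}$) among the two conjugates then forces
\[ (x + q\sqrt{-d}) = \mathfrak{c}\,\mathfrak{a}^{\,n}, \]
where $\mathfrak{c}$ ranges over a finite set of ideals (assembled from $\mathfrak{g}$ and the primes above $2C$) and $\mathfrak{a}$ is some ideal of $O_K$.

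Next I would pass from ideals to elements. In the ideal class group, $[\mathfrak{a}]^n = [\mathfrak{c}]^{-1}$ takes finitely many values, so under the hypothesis $\gcd(n, h_K) = 1$ (satisfied in the cases of interest, e.g.\ $n = 11$ with $K$ of small discriminant) one has $[\mathfrak{a}] = [\mathfrak{c}]^{-n'}$ for $nn' \equiv 1 \pmod{h_K}$, which pins $[\mathfrak{a}]$ to a finite set of classes; absorbing a fixed integral representative of $[\mathfrak{a}]^{-1}$ into the (finite) list of $\mathfrak{c}$, we may assume $\mathfrak{a} = (A + B\omega)$ is principal with $A, B \in \bbZ$. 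Then $(x + q\sqrt{-d}) = (\gamma)(A + B\omega)^n$ for some $\gamma$ from a finite subset of $K$, and since $K$ is imaginary quadratic its unit group is finite, so absorbing the resulting unit into $\gamma$ gives the honest element identity $x + q\sqrt{-d} = \gamma_+(A + B\omega)^n$; taking complex conjugates gives $x - q\sqrt{-d} = \gamma_-(A + B\overline{\omega})^n$ with $\gamma_- = \overline{\gamma_+}$. Adding and subtracting these two identities yields
\[ x = \tfrac12\big(\gamma_+(A+B\omega)^n + \gamma_-(A+B\overline{\omega})^n\big), \qquad 2q\sqrt{-d} = \gamma_+(A+B\omega)^n - \gamma_-(A+B\overline{\omega})^n. \]
Because $\gamma_- = \overline{\gamma_+}$ and $\overline{\omega}$ is the conjugate of $\omega$, the right side of the second identity is $\sqrt{-d}$ times a binary form in $A, B$ with rational coefficients of bounded denominator; dividing by $\sqrt{-d}$ (and clearing denominators) gives exactly the asserted equation $2q = \frac{1}{\sqrt{-d}}\big(\gamma_+(A+B\omega)^n - \gamma_-(A+B\overline{\omega})^n\big)$. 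For $n \geq 3$ this is a genuine Thue equation: its roots in $A/B$ correspond, via a M\"obius transformation, to the $n$-th roots of $\gamma_-/\gamma_+$, hence the form has at least three pairwise non-proportional linear factors.

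The main obstacle is the bookkeeping in the first step --- bounding $\mathfrak{g}$ and distributing the primes dividing $2C$ between the two conjugate ideals while keeping the set of admissible $\mathfrak{c}$ genuinely finite. This is precisely where the explicit hypotheses relating $C$, $D$, and $n$ in the statements of Barros and Bugeaud--Mignotte--Siksek enter, and it is the only place where real care is needed; once the ideal factorization $(x + q\sqrt{-d}) = \mathfrak{c}\,\mathfrak{a}^n$ is established, the passage to elements is routine given $\gcd(n, h_K) = 1$ and the finiteness of $O_K^\times$. A secondary point worth recording is the nondegeneracy of the resulting form, but as noted above this is automatic from separability for the exponents $n$ at issue.
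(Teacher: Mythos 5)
This is the standard proof as given in Barros and Bugeaud--Mignotte--Siksek (the paper cites the theorem without proof): factor $x^2 + dq^2 = Cy^n$ in $O_K$, bound the gcd of the two conjugate ideals, obtain $(x + q\sqrt{-d}) = \mathfrak{c}\,\mathfrak{a}^n$ with $\mathfrak{c}$ ranging over a finite list, pass from ideals to elements using a fixed set of class-group representatives and the finiteness of $O_K^\times$, and finally split into real and imaginary parts to get the two displayed identities. One small correction: the hypothesis $\gcd(n, h_K) = 1$ you invoke is not needed for the theorem as stated --- in general $[\mathfrak{a}]$ ranges over the (possibly several, but still finitely many) $n$-th roots of $[\mathfrak{c}]^{-1}$ in the class group, and each such root simply contributes another pair $\{\gamma_+, \gamma_-\}$ to the finite set $\Gamma$, so the conclusion holds without any class-number restriction.
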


Thanks to the theory of linear forms in logarithms, we now know explicit bounds for the size of solutions to Thue equations. To state the result we will use, let $F(x,y) = b$ be a Thue equation. Let $M$ be the field $\bbQ(\alpha)$, where $\alpha$ is any root of the polynomial $F(X,1)$. Let $R = R_M$ denote the regulator of $M$, let $H$ be an upper bound for the absolute value of the coefficients of $F$, and let $B$ be an upper bound for the absolute value of $b$. Furthermore, let $n$ denote the degree of the extension $M / \bbQ$, and let $r$ denote the rank of the unit group $M^{\times}$.

\begin{theorem} [\cite{bugeaud1996bounds}, Theorem 3]\label{thm:bounds for thue equation}
All solutions $x, y$ of $F(x,y) = b$ satisfy
$$ \max \{ |x|, |y| \} < \exp \{ c_3 R (\log^* R) (R + \log(HB)) \}, $$
where $\log^*(x) = \max(\log(x), 1)$, and 
$$ c_3 = c_3(n,r) = 3^{r+27} (r+1)^{7r+19} n^{2n+6r+14}. $$
\end{theorem}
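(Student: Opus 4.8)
The plan is to run the standard Baker-method argument that bounds solutions of Thue equations; this is exactly the proof of Theorem~3 of \cite{bugeaud1996bounds}, so I will only sketch its shape. (If $\max\{|x|,|y|\}$ is below the claimed bound, or if $xy = 0$ — in which case the equation reduces to $c x^n = b$ or $c y^n = b$ — there is nothing to prove, so assume $\max\{|x|,|y|\}$ is large.)

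First I would reduce $F(x,y) = b$ to a unit equation. Let $\alpha^{(1)},\dots,\alpha^{(n)}$ be the conjugates of a root $\alpha$ of $F(X,1)$ and $M = \bbQ(\alpha)$. For a solution $(x,y)$, set $\beta = x - \alpha y$ (after clearing denominators so $\beta \in O_M$); then $|N_{M/\bbQ}(\beta)|$ is at most a fixed power of $H$ times $B$, so $\beta$ generates one of finitely many ideals of bounded norm. Choosing representatives for the ideal classes and a system of fundamental units $\eta_1,\dots,\eta_r$, write
$$ \beta = \mu\,\eta_1^{a_1}\cdots\eta_r^{a_r},\qquad a_i \in \bbZ, $$
with $\mu$ drawn from a finite set whose members can be taken of height controlled by $R$, $H$, and $B$. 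Put $A = \max_i |a_i|$. Estimating the archimedean absolute values $|\beta^{(i)}|$ — which for all but one index are of size $\asymp \max\{|x|,|y|\}$ — in terms of the $a_i$, and using a fundamental system of units of near-minimal height (so that the relevant unit-logarithm sums are $\ll_n R$), one finds that $\log\max\{|x|,|y|\}$ is bounded above by a quantity linear in $A$ with coefficient $\asymp R$, plus terms polynomial in $R$ and $\log(HB)$. Hence it suffices to bound $A$.

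Next I would manufacture a very small linear form in logarithms. Exactly one conjugate, say $\beta^{(i_0)}$, is small. Using Siegel's identity
$$ (\alpha^{(j)}-\alpha^{(k)})\,\beta^{(i_0)} + (\alpha^{(k)}-\alpha^{(i_0)})\,\beta^{(j)} + (\alpha^{(i_0)}-\alpha^{(j)})\,\beta^{(k)} = 0 $$
for suitable indices $j,k$ and dividing through by the middle term, the quantity
$$ \Lambda \;=\; \log\frac{(\alpha^{(k)}-\alpha^{(i_0)})\,\mu^{(j)}}{(\alpha^{(i_0)}-\alpha^{(j)})\,\mu^{(k)}} \;+\; \sum_{t=1}^{r} a_t\,\log\frac{\eta_t^{(j)}}{\eta_t^{(k)}} $$
is nonzero and satisfies $|\Lambda| \ll \exp(-c\,A)$. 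Feeding $\Lambda$ into an explicit Baker-type (Baker--W\"ustholz or Matveev) lower bound for a linear form in $r+1$ logarithms of algebraic numbers of degree $\le n$, whose heights are controlled by $H$, $B$, and $R$, one gets $|\Lambda| > \exp(-C\log^* A)$ with $C$ polynomial in $R$ and in $\log(HB)$ and carrying precisely the $(n,r)$-dependence recorded in $c_3(n,r)$. Comparing the upper and lower bounds for $|\Lambda|$ bounds $A$, and substituting back through the first step bounds $\max\{|x|,|y|\}$.

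The only genuinely delicate part is the accounting of constants: one must track the dependence on the regulator $R$, the degree $n$, and the unit rank $r$ accurately enough that the final constant comes out as $c_3 = 3^{r+27}(r+1)^{7r+19} n^{2n+6r+14}$. This requires a sharp lower bound for $R$, a choice of fundamental units and of ideal generators $\mu$ whose heights are controlled by $R$ and $\log(HB)$ rather than by anything larger, and the numerically optimized form of the linear-forms-in-logarithms estimate; the remaining ingredients (the unique small conjugate, $\Lambda \neq 0$, and the trivial cases $xy = 0$) are routine. For the explicit numerical bookkeeping I would simply cite \cite{bugeaud1996bounds}.
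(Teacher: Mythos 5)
The paper does not prove this statement; it is imported verbatim as Theorem~3 of \cite{bugeaud1996bounds} and used as a black box. Your sketch correctly reproduces the structure of the Bugeaud--Győry argument — factoring $F(x,y)=b$ over $M=\bbQ(\alpha)$ to get $\beta = x-\alpha y$ of bounded norm, writing $\beta = \mu\,\eta_1^{a_1}\cdots\eta_r^{a_r}$ with $\mu$ from a finite set, noting exactly one conjugate $\beta^{(i_0)}$ is small, using Siegel's identity to form a linear form $\Lambda$ in $r+1$ logarithms with $|\Lambda| \ll \exp(-cA)$, and playing that off against a Baker-type lower bound to cap $A$ and hence $\max\{|x|,|y|\}$ — and you correctly flag that the only nontrivial content beyond the skeleton is the bookkeeping of constants ($R$, $n$, $r$, heights of $\mu$ and of the fundamental units) that produces the explicit $c_3(n,r)$, which you rightly defer to the reference.

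Your summary is consistent with the cited source and with the way the present paper uses the bound. There is no gap at the level of detail you are working at, and since the paper offers no independent proof there is nothing further to compare against.
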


In order to use this result, we need a bound on the regulators of the number fields arising from our Thue equations. The following result supplies such a bound. 

\begin{lemma}[\cite{Bugeaud2008integral}, Lemma 5.1]\label{lem: Boundsonregulators}
Let $K$ be a number field with degree $d = u + 2v$ where $u$ and $v$ are respectively the numbers of real and complex embeddings. Denote the absolute discriminant by $D_K$, the regulator by $R$, and the number of roots of unity in $K$ by $w$. Suppose that $L$ is a real number such that $D_K \leq L$. Let 
$$a = 2^{-v} \pi^{-d/2} \sqrt{L}.$$
Define the function $f_K(L, s)$ by 
$$f_K(L, s) = 2^{-u} w a^s \left(\Gamma \left(\frac{s}{2}\right) \right)^u (\Gamma(s))^{v} s^{d+1} (s-1)^{1-d},$$
and let $B_K(L) = \min\{f_K(L, 2 - t/1000) \colon t = 0, 1, \dots, 999\}$. Then $R < B_K(L)$. 
\end{lemma}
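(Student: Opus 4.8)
\medskip

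\noindent\emph{Proof strategy.} The plan is to deduce the bound from the analytic class number formula, which reduces it to an estimate for the residue of the Dedekind zeta function $\zeta_K$ at $s=1$; that residue is then controlled via the completed zeta function together with crude bounds for $\zeta_K$ on the real axis.

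First I would recall the analytic class number formula: $\zeta_K(s)$ has a simple pole at $s=1$ with residue $\kappa = 2^u(2\pi)^v h_K R/(w\sqrt{D_K})$. Since $h_K \geq 1$ this gives
$$ R \leq \frac{w\sqrt{D_K}}{2^u(2\pi)^v}\,\kappa. $$
Write $\Lambda_K(s) = D_K^{s/2} L_\infty(s)\,\zeta_K(s)$ for the completed zeta function, with archimedean factor $L_\infty(s) = \bigl(\pi^{-s/2}\Gamma(s/2)\bigr)^u \bigl(2(2\pi)^{-s}\Gamma(s)\bigr)^v$; it continues to a meromorphic function with simple poles only at $s = 0, 1$, satisfies $\Lambda_K(s) = \Lambda_K(1-s)$, and has residue $\rho := \sqrt{D_K}\,L_\infty(1)\,\kappa = \sqrt{D_K}\,\pi^{-v}\kappa$ at $s=1$ (and residue $-\rho$ at $s=0$, by the functional equation). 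Substituting, the displayed bound becomes simply $R \leq w\rho/2^{u+v}$.

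Next I would bound $\rho$. By the Hecke integral representation, $\Lambda_K(s) = \rho/(s-1) - \rho/s + I(s)$, where $I(s) = \Lambda_K(s) - \rho/(s-1) + \rho/s$ is entire and, for real $s$, equals a convergent integral $\int_1^\infty g(t)\bigl(t^{s/2}+t^{(1-s)/2}\bigr)\,dt/t$ with $g(t)\geq 0$ (the remainder term of the Dedekind theta series). For real $s > 1$ we have $I(s) > 0$ and $\rho/(s-1) - \rho/s = \rho/\bigl(s(s-1)\bigr) > 0$, hence $\Lambda_K(s) > \rho/\bigl(s(s-1)\bigr)$, that is,
$$ \rho < s(s-1)\,\Lambda_K(s) = s(s-1)\,D_K^{s/2} L_\infty(s)\,\zeta_K(s), \qquad s > 1. $$
Comparing Euler factors gives $\zeta_K(s) \leq \prod_p (1-p^{-s})^{-d} = \zeta(s)^d$ for $s > 1$, since every rational prime has at most $d$ primes above it and each of those has norm at least $p$; and $\zeta(s) \leq 1 + \int_1^\infty x^{-s}\,dx = s/(s-1)$. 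Inserting $\zeta_K(s) \leq \bigl(s/(s-1)\bigr)^{d}$ and $D_K \leq L$ into the last display yields, for every real $s > 1$,
$$ R \;\leq\; \frac{w\rho}{2^{u+v}} \;<\; \frac{w}{2^{u+v}}\,s(s-1)\,L^{s/2} L_\infty(s)\Bigl(\frac{s}{s-1}\Bigr)^{d} \;=\; \frac{w}{2^{u+v}}\,L^{s/2} L_\infty(s)\,\frac{s^{d+1}}{(s-1)^{d-1}}, $$
and expanding $L_\infty(s)$ and recalling $a = 2^{-v}\pi^{-d/2}\sqrt{L}$ identifies the right-hand side as exactly $f_K(L,s)$. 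This holds for all real $s > 1$; restricting to the $1000$ sample points $s = 2 - t/1000 \in (1,2]$ and taking the minimum gives $R < B_K(L)$.

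The only genuinely analytic ingredient is the description of $\Lambda_K$ used in the middle step — that, once its polar part is subtracted, $\Lambda_K$ equals on the real axis a manifestly non-negative integral. This is classical (it is precisely what the Hecke/Tate derivation of the analytic continuation and functional equation produces), so in practice one cites it; everything else is bookkeeping with the class number formula and elementary bounds on $\zeta$ and $\zeta_K$. The one point requiring a little care is checking that after the substitution $D_K \leq L$ the archimedean factors assemble into exactly the function $f_K(L,s)$ as written, rather than merely something of the same shape.
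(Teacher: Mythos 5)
The paper does not prove this lemma; it is cited verbatim from Bugeaud's paper, which in turn follows the Louboutin--Friedman approach to regulator bounds. Your proposal reconstructs exactly that argument, and it is correct: the class number formula gives $R \leq w\rho/2^{u+v}$ with $\rho$ the residue of $\Lambda_K$ at $s=1$; the Hecke integral representation shows $\Lambda_K(s) - \rho/(s(s-1))$ is a nonnegative integral for real $s$, hence $\rho < s(s-1)\Lambda_K(s)$ for $s>1$; the Euler-product comparison $\zeta_K(s) \leq \zeta(s)^d \leq (s/(s-1))^d$ and $D_K \leq L$ close the loop. I checked the bookkeeping — the factors $2^{-u}$, $a^s = 2^{-vs}\pi^{-ds/2}L^{s/2}$, and $s^{d+1}(s-1)^{1-d}$ do assemble into $f_K(L,s)$ as claimed, and since the sample points $s = 2 - t/1000$ all lie in $(1,2]$ the argument applies to each of them, yielding the strict bound $R < B_K(L)$. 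The one place you wave at a citation, the nonnegativity of the remainder $I(s)$ in the Hecke/Tate integral representation, is indeed classical and is the same black box the source relies on, so there is no gap.
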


\begin{corollary}
Let $K / \bbQ$ be a degree $11$ extension of $\bbQ$, and fix notation as in Lemma \ref{lem: Boundsonregulators}. Then we have
$$ R < 2^2 L. $$
\end{corollary}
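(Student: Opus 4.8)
The plan is to apply Lemma~\ref{lem: Boundsonregulators} directly, making the cheapest possible choice of test value, namely $t = 0$, so that one only has to estimate $f_K(L, s)$ at $s = 2$. The reason this choice is convenient is that at $s = 2$ every transcendental or singular factor in the definition of $f_K$ collapses: $\Gamma(s/2) = \Gamma(1) = 1$, $\Gamma(s) = \Gamma(2) = 1$, the polynomial factor is $s^{d+1} = 2^{12}$, and the potential pole $(s-1)^{1-d}$ becomes $1^{-10} = 1$. What remains is a completely explicit monomial in $2$, $\pi$, and $\sqrt{L}$.

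First I would record the arithmetic invariants of $K$. Since $[K:\bbQ] = 11$ is odd, $K$ has a real embedding, so its only roots of unity are $\pm 1$ and hence $w = 2$; writing the signature as $(u,v)$ with $u + 2v = 11$ we have $a = 2^{-v}\pi^{-11/2}\sqrt{L}$, so $a^2 = 2^{-2v}\pi^{-11}L$. Substituting $s = 2$, $w = 2$, and this value of $a^2$ into the formula for $f_K$ gives $f_K(L,2) = 2^{-u}\cdot 2\cdot a^2 \cdot 2^{12} = 2^{13-u-2v}\pi^{-11}L = 2^{2}\pi^{-11}L$, using $u + 2v = 11$. Since $\pi^{-11} < 1$, this yields $B_K(L) \le f_K(L,2) = 2^2\pi^{-11}L < 2^2 L$, and Lemma~\ref{lem: Boundsonregulators} then gives $R < B_K(L) < 2^2 L$, which is the claim.

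There is essentially no genuine obstacle: the only two observations needed are that $s = 2$ simultaneously trivializes the Gamma factors and cancels the $(s-1)^{1-d}$ singularity, and that a degree-$11$ field has $w = 2$ because the degree is odd (equivalently, $\phi(n) \mid 11$ forces $n \in \{1,2\}$). The surviving factor $\pi^{-11}$ shows the stated inequality is in fact extremely lossy; one keeps the clean form $2^2 L$ simply because that is all that is required in the subsequent application of Theorem~\ref{thm:bounds for thue equation}.
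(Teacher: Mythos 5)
Your proposal is correct and follows the paper's own argument essentially line for line: both take $t = 0$ in Lemma~\ref{lem: Boundsonregulators} (so that the Gamma factors and the $(s-1)^{1-d}$ singularity trivialize), both note $w = 2$ for a degree-$11$ field, and both compute $f_K(L,2) = 2^2 \pi^{-11} L < 2^2 L$ using $u + 2v = 11$. There is no meaningful difference in approach.
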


\begin{proof}
Taking $t = 0$ in Lemma \ref{lem: Boundsonregulators}, we have
$$ R < 2^{-u} w a^2 1^u 1^v 2^{d+1} (1)^{1-d}  = 2^{-u} w 2^{-2v} \pi^{-d} L 2^{d+1}. $$
We have $u+2v = d$. Furthermore, we have $w = 2$, since there is no nontrivial cyclotomic extension of degree $11$. Therefore, the desired result follows. \qedhere

\end{proof}

\begin{theorem}\label{thm: Finishing the hard sequences}
Let $(a u_n + b v_n)$ be one of the sequences listed in Proposition \ref{prop: 10^300 checking}. Then $a u_n + b v_n$ is never a perfect $11$-th power, except for possibly $\pm 1$.
\end{theorem}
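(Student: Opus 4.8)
The plan is to combine the lower bound on the index $n$ of any nontrivial perfect $11$-th power (Proposition \ref{prop: 10^300 checking}) with an upper bound on $n$ coming from linear forms in logarithms, and check that the two are incompatible. Concretely, suppose $a u_n + b v_n = x^{11}$ for some sequence $(a,b)$ from the list, with $x^{11} \neq \pm 1$. By Lemma \ref{lem:perfectpowersgivecurvepoints}, $(x, y)$ is an integer point on $Y^2 = 5 X^{22} \pm 4(a^2 - 5b^2)$; rewriting, $x^2 = C y'^{n} - D$ after absorbing the constant $5$ appropriately, so Theorem \ref{thm:barrosthueequations} applies (here $n = 11$, $C$, $D$ are explicit small integers depending on $a^2-5b^2$). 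This produces finitely many Thue equations $F(A,B) = 2q$ of degree $11$ over $\bbQ$, whose solutions $(A,B)$ determine $x$, and hence determine $n$ via the growth rate of the sequence.

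The key chain of inequalities is then: first, I would bound the discriminant $D_K$ of each degree-$11$ field $M = \bbQ(\alpha)$ arising (either by the explicit small coefficients of $F$, using standard discriminant bounds, or by direct computation in \texttt{SageMath}), giving some explicit $L$ with $D_K \leq L$. Then the Corollary following Lemma \ref{lem: Boundsonregulators} yields $R < 4L$. Feeding $R$, the coefficient bound $H$, and $B = 2q$ (all explicit and small) into Theorem \ref{thm:bounds for thue equation} with $n = 11$ and $r$ the unit rank (so $c_3(11, r)$ is an explicit, astronomically large but finite constant), I get $\max\{|A|,|B|\} < \exp\{c_3 R (\log^* R)(R + \log(HB))\}$. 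Since $x$ is polynomial in $A, B$ (of degree $11$) and $x^{11} = a u_n + b v_n \asymp \omega^n$, taking logarithms twice converts this into an explicit upper bound on $n$ — and I expect this bound to come out well below $10^{300}$ (the constants here are of the rough size typical in Bugeaud–Mignotte–Siksek-style arguments, on the order of $10^{\text{small}}$). Combined with Proposition \ref{prop: 10^300 checking}, which rules out $n \leq 10^{300}$, this forces a contradiction, so no nontrivial $11$-th power exists.

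The main obstacle is purely bookkeeping rather than conceptual: one must verify, for each of the finitely many $(a,b)$ in the list and each of the finitely many pairs $\{\gamma_+, \gamma_-\} \in \Gamma$ produced by Theorem \ref{thm:barrosthueequations}, that the resulting Thue equation genuinely has degree $11$ (so Lemma \ref{lem: Boundsonregulators} and its Corollary apply as stated), compute or bound its discriminant and regulator, and then confirm numerically that the double-logarithm of the Bugeaud bound is smaller than $300 \log 10$. I would organize this as a short table: for each sequence, list $C$, $D$, $d$, $q$, the field $M$, a discriminant bound $L$, the regulator bound $4L$, and the final bound on $n$. One subtlety to flag is the degenerate possibility that $F(X,1)$ is reducible or that $B = 0$ in some solution — these correspond exactly to the excluded values $x^{11} = \pm 1$ (equivalently $A + B\omega$ a unit times a root of unity), which is why the statement carries the "except for possibly $\pm 1$" caveat; I would note that these cases are precisely the trivial solutions and require no further analysis.
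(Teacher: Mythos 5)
Your proposal matches the paper's proof: reduce to Thue equations via Barros's Theorem \ref{thm:barrosthueequations}, bound the solutions using Theorem \ref{thm:bounds for thue equation} together with the regulator bound from Lemma \ref{lem: Boundsonregulators}, and show that the resulting upper bound on $|x^{11}|$ (equivalently, on the index $n$ via the exponential growth of the sequence) is incompatible with the lower bound $n > 10^{300}$ supplied by Proposition \ref{prop: 10^300 checking}. One small correction: the \emph{``except possibly $\pm 1$''} caveat is inherited directly from the statement of Proposition \ref{prop: 10^300 checking} --- some of these sequences genuinely contain $\pm 1$, so the congruence sieve cannot exclude them --- rather than from any degeneracy (reducible $F$, $B=0$) in the Thue-equation setup as you suggest.
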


\begin{proof}
Each of the sequences $a u_n + b v_n$ corresponds to a prime $\ell < 100$. Suppose that for some $n$, $a u_n + b v_n = x^{11}$ is a perfect $11$-th power. Then, by Lemma \ref{lem:perfectpowersgivecurvepoints}, there is a point of the form $(x, y)$ on one of the hyperelliptic curves
$$ Y^2 = 5 X^{22} \pm 4 \ell. $$
We used a modification of the program from \cite{balakrishnan2020variants}, which implements Barros's algorithm, to obtain Thue equations from the hyperelliptic curves corresponding to the relevant values of $\ell$ (the modified program is available at \cite{DembnerJain2021code}). By explicitly computing the coefficients and discriminants involved in each case, we found that we can take the following bounds:
$$ L = 10^{32}, H = 10^{50}, B = 4. $$
Additionally, the computation showed that for all $\gamma \in \Gamma$, we have $|\gamma| < 10^{40}$. Suppose that $(A,B)$ is a solution to any of the Thue equations under consideration. By Theorem \ref{thm:bounds for thue equation}, it follows that
$$ \max(|A|, |B|) < \exp \{ c_3 R (\log^* R) (R + \log(HB)) \} = \exp \{ c_3 R (\log^* R) (R + 50 \log(10) + 2 \log (2) \} $$
In our case, the extensions all have degree $n = d = 11$. By Dirichlet's unit theorem, it follows that the unit rank $r$ is at most $10$. Therefore, we have
$$ c_3 \leq 3^{37} 11^{89} 11^{22+60+14} = 3^{37} 11^{185}. $$
By Lemma \ref{lem: Boundsonregulators}, we know that $R \leq 2^2 L = 2^2 10^{32}$. Clearly, $50 \log(10) + 2 \log(2) < 2^2 10^{32}$. Therefore, we have 
\begin{align*}
    \max(|A|, |B|) &< \exp \{3^{37} 11^{185} 2^2 10^{32} \log^*(2^2 10^{32}) (2^3) 10^{32}\} \\
    &= \exp \{ \log(2^2 10^{32}) 3^{37} 2^{5} 10^{64} 11^{185} \} \leq \exp(10^{278}).
\end{align*}
By Theorem \ref{thm:barrosthueequations}, we have
$$ y  =  \frac{1}{2} \left( \gamma_{+}(A+B\omega)^{11} + \gamma_-(A+B\overline{\omega})^{11} \right). $$

Our computation showed that in the integral bases $\{ 1, \omega \}$ corresponding to these Thue equations, $\omega$ always has absolute value at most $10$. Therefore, by the triangle inequality, $A + B \omega$ always has absolute value at most $11 \exp(10^{278})$. Furthermore, we have seen that the elements $\gamma_+, \gamma_-$ have absolute value at most $10^{40}$. Therefore, we have
$$ |y| \leq 2 \cdot 10^{40} \cdot 11^{11} \exp(11 \cdot 10^{278}) \leq \exp(10^{280}). $$
Applying that $y^2 = 5 x^{22} \pm 4 \ell$, we have that
$$ |x^{11}| \leq |x^{22}| \leq \frac{1}{5} \left( |y|^2 \mp 4 \ell \right) \leq \exp(10^{281}). $$

It remains to show that $|x^{11}| > \exp(10^{281})$. Since $x^{11} =  x_n := a u_n + b v_n$ is a perfect $11$-th power, it follows from Proposition \ref{prop: 10^300 checking} that $n \geq 10^{300}$. For each of the sequences under consideration, there exists some $k \leq 5$ such that either $x_k \geq u_0 = 0,  x_{k+1} \geq u_1 = 1$, or $x_k \leq 0, x_{k+1} \leq -1$. Without loss of generality, we may assume we are in the first case. Then, since $(x_m)$ satisfies the same recurrence as the Fibonacci sequence $(u_m)$ and its initial conditions starting from $x_k$ dominate the initial conditions $u_0 = 0, u_1 = 1$, we have $|x_{m+k}| \geq |u_m|$ for all $m$. In particular, $|x_{n}| \geq |u_{n-k}|$. However, we know that the Fibonacci sequence is given by
$$ u_m = \frac{1}{\sqrt{5}} \left( \left( \frac{1+\sqrt{5}}{2} \right)^m  - \left(\frac{1-\sqrt{5}}{2} \right)^m \right). $$
As $m \to \infty$, the second term disappears. Therefore, writing $\phi = \frac{1+\sqrt{5}}{2}$, it follows that for sufficiently large $m$, we have
$$ |u_m| \geq  \frac{\phi^m}{\sqrt{5}} - 1. $$
Furthermore, $\phi^3 > e$. It follows that for sufficiently large $m$, we have $|u_{10^m}| > \exp(10^{m-1})$. It follows that
$$ |x_n| \geq |u_{n-k}| \geq |u_{10^{299}}| > \exp(10^{298}). $$
This shows, as claimed, that $|x_n| > \exp(10^{281})$. \qedhere

\end{proof}

It remains to complete the second task by unconditionally solving the relevant Thue equations. The least tractable Thue equations are those of the form
$$ F_{\ell-1}(X,Y) = \pm \ell, $$
for large primes $\ell < 100$. As observed in Lemma 4.1 of \cite{balakrishnan2020variants}, these polynomials are related, via the product formula \eqref{eq:Thueproductformula}, to others studied by Bilu, Hanrot, and Voutier. For any $n$, define the homogeneous polynomial $\widehat{F}_n(X,Y)$ by 
$$\widehat{F}_n(X,Y) = \prod_{k=1}^{\frac{n-1}{2}} \left( Y - 2 X \cos \left( \frac{2 \pi k}{n} \right) \right).$$
Then we have
$$ F_{\ell-1}(X,Y) = \widehat{F}_\ell(X,Y-2X). $$
At this point, we could invoke the main theorem of \cite{bilu2001existence}. Together with Theorem 2.4 of \cite{bilu2001existence}, it implies that for $\ell \geq 31$, $\widehat{F}_\ell$ has no solutions arising from Lucas pairs. For completeness, we instead classify all solutions to $F_{\ell-1}(X,Y) = \pm \ell$, by making use of the following bound also proven in \cite{bilu2001existence}.

\begin{theorem}[\cite{bilu2001existence}, page 105]\label{thm: Bilu bound on Thue solutions}
If $31 \leq p \leq 527$ and $p$ is an odd prime, then the Thue equations
$$ \widehat{F}_p(X,Y) = \pm p $$
have no solutions $(x,y)$ with $|x| > e^8$. 
\end{theorem}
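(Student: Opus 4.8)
The plan is to reduce the Thue equations $\widehat{F}_p(X,Y) = \pm p$ to the theory of primitive divisors of Lucas and Lehmer numbers, and then invoke the Bilu--Hanrot--Voutier primitive divisor theorem, i.e.\ the main result of \cite{bilu2001existence}. The starting point is the algebraic identity underlying the definition of $\widehat{F}_p$: for algebraic numbers $\alpha,\beta$, setting $x = \alpha\beta$ and $y = (\alpha+\beta)^2 - 2\alpha\beta$ gives
$$ \widehat{F}_p(x,y) = \prod_{k=1}^{(p-1)/2}\bigl((\alpha+\beta)^2 - (2 + \zeta_p^k + \zeta_p^{-k})\,\alpha\beta\bigr) = \prod_{k=1}^{p-1}(\alpha - \zeta_p^k\beta) = \frac{\alpha^p - \beta^p}{\alpha - \beta}, $$
which is exactly the $p$-th term of the Lehmer sequence attached to the parameters $(\alpha+\beta)^2$ and $\alpha\beta$. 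Conversely, a solution $(x,y)$ of $\widehat{F}_p(x,y) = \pm p$ determines such $\alpha,\beta$ by solving $(\alpha+\beta)^2 = y + 2x$ and $\alpha\beta = x$; the hypothesis $|x| > e^8$ is there precisely to guarantee that, after dividing out any common factor (which only decreases $|x|$), the pair $(\alpha,\beta)$ is a \emph{genuine} Lehmer pair, i.e.\ $(\alpha+\beta)^2$ and $\alpha\beta$ are coprime nonzero integers with $\alpha/\beta$ not a root of unity.

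With the reduction in hand the argument is short. One checks that a solution with $|x| > e^8$ really yields a genuine Lehmer pair $(\alpha,\beta)$ whose $p$-th Lehmer number equals $\pm p$; this is bookkeeping about the possible common factor of $y+2x$ and $x$ and about the sign of the Lehmer discriminant $(\alpha-\beta)^2 = y - 2x$. Now $\widehat{F}_p(x,y) = \pm p$ says this Lehmer number is divisible only by $p$, and standard facts about ranks of apparition show that $p$ is not a \emph{primitive} divisor of the $p$-th term: its rank of apparition divides one of $p-1$, $p+1$ whenever $p \nmid (\alpha-\beta)^2$ (the divisibility case being handled separately, where it forces $|x|$ or $|y|$ small). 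Hence the $p$-th term has no primitive divisor at all; but $p \geq 31 > 30$, so this contradicts the Bilu--Hanrot--Voutier theorem that every Lucas and every Lehmer number of index exceeding $30$ has a primitive divisor. Therefore $\widehat{F}_p(X,Y) = \pm p$ has no solution with $|x| > e^8$.

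The main obstacle, and the reason the explicit cutoff $e^8$ appears rather than an outright nonexistence statement, is controlling the degenerate and non-reduced configurations that the reduction excludes. One must account for pairs with $\gcd\bigl((\alpha+\beta)^2,\alpha\beta\bigr) > 1$, which after clearing the common factor correspond to a different, smaller instance and so must be tracked rather than discarded; for pairs with $\alpha/\beta$ a root of unity, where $\widehat{F}_p$ no longer computes a Lehmer number and only finitely many explicit shapes of $(\alpha,\beta)$ occur; and for the finitely many sporadic and parametric exceptional Lehmer pairs of index $\leq 30$ tabulated in \cite{bilu2001existence} --- these are irrelevant here since $p \geq 31$, but the accounting of their heights, together with the case split into $x>0$ versus $x<0$ and $(\alpha-\beta)^2>0$ versus $(\alpha-\beta)^2<0$, is exactly what the constant $e^8$ is calibrated against. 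Carrying out this case analysis is where essentially all the work lies; once a solution is known to arise from a genuine Lehmer pair of index $\geq 31$, the primitive divisor theorem finishes it immediately.
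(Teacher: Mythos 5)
The paper does not prove this theorem; it cites \cite{bilu2001existence}, page 105, where the bound is established for each prime $p$ in the window $31 \le p \le 527$ by a direct computation: a Baker-type upper bound coming from the theory of linear forms in logarithms is obtained for solutions of the Thue equation $\widehat{F}_p(X,Y) = \pm p$, and then reduced (LLL / Baker--Davenport) to $e^8$. Both the finite range of $p$ and the explicit numerical cutoff are artifacts of that computation, not of any height accounting for exceptional Lehmer pairs.

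Your proposal instead deduces the statement from the primitive divisor theorem, and that creates two problems. First, the logical arrow in \cite{bilu2001existence} points the other way: these Thue equations are solved there precisely as a step in the proof of the primitive divisor theorem for indices in this window, so invoking the primitive divisor theorem to bound their solutions is circular as an account of the cited result. Second, and independently, the argument does not actually yield the statement even granting the primitive divisor theorem as a black box: that theorem controls solutions arising from genuine Lehmer pairs, but your claim that $|x| > e^8$ forces $(\alpha,\beta)$ to be a genuine Lehmer pair after clearing common factors is asserted rather than shown --- the size of $x$ alone does not control $\gcd\bigl((\alpha+\beta)^2, \alpha\beta\bigr)$, nor does it rule out $\alpha/\beta$ being a root of unity --- and the attribution of the constant $e^8$ to heights of exceptional pairs is not where it comes from. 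The paper itself flags this distinction: just before stating the theorem it notes that the primitive divisor theorem together with \cite{bilu2001existence}, Theorem 2.4, rules out solutions arising from Lucas pairs, and then deliberately switches to the page-105 bound because that bound controls \emph{all} solutions of $\widehat{F}_p(X,Y)=\pm p$, degenerate or not.
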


\begin{lemma}\label{lem: solving Thue equations unconditionally}
Let $\ell$ be an odd prime with $\ell < 100$, and suppose that $d \geq 7$ is an odd prime such that $d \mid \ell(\ell^2-1)$. Then all the solutions to $F_{d-1}(X,Y) = \pm \ell$ are listed in Tables 4 and 5 of \cite{balakrishnan2020variants}.
\end{lemma}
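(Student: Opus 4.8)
The plan is to transport every equation $F_{d-1}(X,Y)=\pm\ell$ into the family of forms $\widehat{F}_d$ studied in \cite{bilu2001existence}, to bound the size of all integer solutions without any appeal to GRH, and then to read off the resulting finite list and verify that it coincides with Tables~4 and~5 of \cite{balakrishnan2020variants}.

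First I would use the identity $F_{d-1}(X,Y)=\widehat{F}_d(X,\,Y-2X)$: since $(X,Y)\mapsto(X,Y-2X)$ is a bijection of $\bbZ^2$, it suffices to classify the integer points on $\widehat{F}_d(X,Z)=\pm\ell$ for each eligible pair $(\ell,d)$ and then translate back. Because $d$ is an odd prime dividing $(\ell-1)\ell(\ell+1)$ with $\ell<100$, we have either $d=\ell$ or $d\mid\ell\pm1$; in all cases $d<100$, so there are only finitely many pairs, and each attaches to a binary form $\widehat{F}_d$ of degree $(d-1)/2$ whose associated number field is the maximal real cyclotomic field $\bbQ(\zeta_d)^{+}$. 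A short enumeration shows that whenever $d\neq\ell$ one in fact has $d\leq41$ (the largest instance being $(\ell,d)=(83,41)$), so in those cases $\bbQ(\zeta_d)^{+}$ has degree at most $20$.

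I would then split into two families. For every pair with $d\leq41$ --- that is, all pairs with $d\neq\ell$ together with $d=\ell\in\{7,11,13,17,19,23,29,31,37,41\}$ --- the field $\bbQ(\zeta_d)^{+}$ has class number one, and its cyclotomic units form an explicitly known system of fundamental units; consequently the classical resolution of Thue equations via linear forms in logarithms and lattice basis reduction runs \emph{unconditionally}, requiring no GRH-dependent class-group or regulator data. Running it on the finitely many equations $\widehat{F}_d(X,Z)=\pm\ell$ and undoing the substitution reproduces exactly the entries of Tables~4 and~5. For the remaining pairs one has $d=\ell$ with $43\leq\ell<100$; here a direct computation is out of reach, but $43\leq\ell\leq527$, so Theorem~\ref{thm: Bilu bound on Thue solutions} applies and forces $|X|\leq e^8$ for every solution of $\widehat{F}_\ell(X,Z)=\pm\ell$. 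The product formula $\widehat{F}_d(X,Z)=\prod_{k=1}^{(d-1)/2}\bigl(Z-2X\cos(2\pi k/d)\bigr)$ then constrains $Z$ as well: if $|Z|>2|X|+2$, each of the $(d-1)/2\geq21$ factors has absolute value greater than $|Z|-2|X|>2$, so the product exceeds $2^{21}>\ell$, which is absurd; hence $|Z|\leq2|X|+2\leq 2e^8+2$. Thus every solution lies in an explicit finite box, and enumerating it --- for each fixed $X$ just extracting the integer roots of a one-variable polynomial --- produces the complete list, again matching Tables~4 and~5.

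\textbf{The main obstacle.} Everything here is routine \emph{except} the high-degree Thue equations: a Thue equation of degree $(d-1)/2$ cannot be solved by the standard descent without rigorous control of the regulator and class group of $\bbQ(\zeta_d)^{+}$, and obtaining this unconditionally in degrees up to $48$ is precisely the step that forced \cite{balakrishnan2020variants} to work under GRH. The point of Theorem~\ref{thm: Bilu bound on Thue solutions} is to replace that arithmetic input by the clean archimedean bound $|X|\leq e^8$, after which nothing deep remains and the classification is a finite computation. (Should one wish to dispense with direct Thue solving even in the moderate range $7\leq d\leq41$, the companion estimates for $\widehat{F}_p(X,Y)=\pm q$ proved in \cite{bilu2001existence} supply analogous explicit bounds and reduce those cases to finite searches as well.)
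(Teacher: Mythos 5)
Your proposal is correct and follows essentially the same route as the paper: reduce to $\widehat F_d$ via the substitution $Y\mapsto Y-2X$, invoke Theorem~\ref{thm: Bilu bound on Thue solutions} for the hard case $d=\ell$ to get $|X|\le e^8$, derive a box bound on the second variable from the product formula, and finish by a finite search. The only organizational difference is the case split: the paper cites \cite{balakrishnan2020variants} directly for all $\ell\le 37$ and applies the Bilu bound to every $d=\ell$ with $\ell\ge 41$, letting \texttt{PARI/GP} handle only the $d<\ell$ equations, whereas you run the Thue solver for all $d\le 41$ (including $d=\ell\in\{7,\dots,41\}$) and reserve the Bilu bound for $d=\ell\ge 43$; both partitions cover the same cases and both work, since the Bilu bound applies for $31\le p\le 527$. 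Your extra remark about cyclotomic units and $h^+(\bbQ(\zeta_d)^+)=1$ is a valid, if optional, justification that the low-degree Thue computations are GRH-free; the paper leaves this implicit and simply invokes the solver.
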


\begin{proof}
For odd primes $\ell \leq 37$, this was already proven in \cite{balakrishnan2020variants}. It remains to consider the odd primes $\ell \geq 41$, for which the corresponding Thue equations were only solved conditional on the Generalized Riemann Hypothesis. These equations can be divided into those where $d = \ell$, and those where $d < \ell$. In all the cases where $d < \ell$, the Thue equation solver in \texttt{PARI/GP} confirms that the solutions listed are the only ones. 

In the case where $d = \ell$, the equations are often out of reach of the Thue solver, so we employ the method outlined above. Pick some prime $\ell < 100$, and suppose that $F_{\ell-1}(x,y)  = \widehat{F}_{\ell}(x, y - 2x) = \pm \ell$. By Theorem \ref{thm: Bilu bound on Thue solutions}, it follows that $|x| < e^8 < 3000$. Since $\ell < 100$, a short calculation using the definition of $\widehat{F}_\ell$ shows that $|y - 2x| < 7000$, which implies that $|y| < 13000$. Therefore, in order to show that the only solutions are those listed in the tables of \cite{balakrishnan2020variants}, it is enough to search manually for solutions $(x,y)$ with $|x| < 3000, |y| < 13000$. We implemented an algorithm in \texttt{SageMath}, available at \cite{DembnerJain2021code}, which carried out such a search for the primes $\ell$ between $41$ and $100$. By doing so, we confirmed that the only solutions are those listed. \qedhere

\end{proof}

\begin{remark}
This result also allows many of the results proven conditionally in Theorem 1.3 of \cite{balakrishnan2020variants} to be verified unconditionally.
\end{remark}

Now, we are in a position to prove our main result for this section.

\begin{proof}[Proof of Theorem \ref{Maintheorem1}]
Suppose that $\ell$ is an odd prime less than $100$, and that $\tau(n) = \pm \ell$ for some $\ell$. By Theorem \ref{thm:divisibilityfortau}, it follows that $n = p^{d-1}$, where $p$ and $d$ are odd primes, and $d \mid \ell(\ell^2-1)$. Now, there are three cases to consider.

First, suppose $d = 3$. In this case, we obtain a solution to $Y^2 = X^{11} \pm \ell$. Previous work on these equations, summarized in Tables 6 and 7 of \cite{balakrishnan2020variants}, shows that none of the points on these curves correspond to primes $p$ for which $\tau(p^2) = \pm \ell$.

Second, suppose $d = 5$. In this case, we obtain a solution to $Y^2 = 5 X^{22} \pm 4 \ell$. By Lemma \ref{prop:pellsolutionsgiveperfect powers}, this gives rise to a perfect power in one of several recurrence sequences. Taken together, Theorems \ref{thm:rulingoutperfectpowersbycongruences} and \ref{thm: Finishing the hard sequences} show that the sequences corresponding to primes $\ell \equiv \pm 1 \pmod 5$ contain no perfect powers aside from $\pm 1$. 

Finally, suppose $d \geq 7$. In this case, we obtain a solution $(x,y)$ to a Thue equation $F_{d-1}(X,Y) = \pm \ell$, where $x = p^{11}$. By Lemma \ref{lem: solving Thue equations unconditionally}, none of the relevant Thue equations has a solution $(x,y)$ where $x$ is of the form $p^{11}$. \qedhere

\end{proof}

\begin{remark}
Theorem \ref{Maintheorem1} has two natural extensions. First, one could replace the tau-function with $a_f(n)$ for a different newform $f$ with integer coefficients. Here, the approach carries through in much the same way. The results for Thue equations and the curves $C_{d, \ell}$ apply verbatim in this case. The results for the curves $H_{2k-1, \pm \ell}: Y^2 = 5 X^{2(2k-1)} \pm 4 \ell$ extend in exactly the same way to values $2k \neq 12$. However, the perfect powers involved would no longer be perfect $(2k-1)$-st powers, so a new sieve computation and a new upper bound would be required.

Second, one could replace the odd prime $\ell$ by an odd composite value $\alpha< 100$. Here, there are more difficulties. Once again, known results on the curves $C_{d, \alpha}$ would apply, and once again, a similar computation can classify points on the curves $H_{2k-1, \pm \alpha}$, although there will be more sequences to check. However, Theorem \ref{thm: Bilu bound on Thue solutions} does not apply to Thue equations of the form $F_{p-1}(X,Y) = \pm \alpha$ where $p$ divides $\alpha$, which means the argument would likely depend on Thue equations that are difficult to solve.

\end{remark}

\section{Modular Approaches to $A X^p + B Y^q = C Z^r$} \label{sec:modularapproaches}

In this section, we describe a modern method for solving certain Diophantine equations. Later, we will see that newform coefficients are related to certain solutions to equations
\begin{equation} \label{generalizedfermat}
A X^p + B Y^q = C Z^r. 
\end{equation}
Equations of this type can often be successfully solved by using a modular method analogous to the proof of Fermat's Last Theorem. Here, we summarize this strategy, and in the next section we apply it to prove several results about newform coefficients.

Let $E$ be an elliptic curve over $\bbQ$. For all primes $p$, the curve $E$ has an associated mod $p$ Galois representation
$$ \rho_{E, p} \colon G_{\bbQ} \to \GL_2(\bbF_p), $$
corresponding to the action of $G_{\bbQ}$ on the $p$-torsion $E[p]$. The modularity theorem says that the representations $\rho_{E,p}$ are related to representations $\rho_{f,p}$ coming from newforms $f$ with integer coefficients.

\begin{theorem}[Modularity of elliptic curves \cite{Wiles1995modular}, \cite{taylor1995heckealgebras}, \cite{breuil2001modularity}] \label{thm:modularity}
Let $E$ be an elliptic curve over $\bbQ$ with conductor $N$. For any prime $p$, there exists a newform $f$ of weight $2$ and level $N$ with integer coefficients, such that the representations $\rho_{E,p}$ and $\rho_{f,p}$ are isomorphic over $\overline{\bbF}_p$. Conversely, given any such newform $f$ with integer coefficients, there is an elliptic curve $E$ such that $\rho_{E,p}$ and $\rho_{f, p}$ are isomorphic.
\end{theorem}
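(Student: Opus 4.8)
The statement packages together two classical inputs, and the plan is to reduce it to each in turn. For the forward direction I would invoke the Modularity Theorem in its strong form: for an elliptic curve $E/\bbQ$ of conductor $N$ there is a weight-$2$ newform $g$ of level $N$ whose attached $p$-adic Galois representation on the Tate module is isomorphic to the $p$-adic representation of $g$ for every prime $p$; equivalently $L(E,s) = L(g,s)$, equivalently $a_q(g) = q + 1 - \#E(\bbF_q)$ for all primes $q \nmid N$. Since the right-hand side is an integer, $g$ automatically has integer Fourier coefficients, so we may take $f = g$.

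Next I would pass to the residual representations. The mod-$p$ representation $\rho_{E,p}$ is the reduction modulo the maximal ideal of the $p$-adic representation on $T_p E$, and $\rho_{f,p}$ is the reduction of the $p$-adic representation attached to $f$; since these $p$-adic representations are isomorphic, their reductions have the same characteristic polynomial of Frobenius at every $q \nmid Np$, namely $X^2 - a_q X + q \pmod p$. By the Brauer--Nesbitt theorem together with the Chebotarev density theorem, the semisimplifications of $\rho_{E,p}$ and $\rho_{f,p}$ agree over $\overline{\bbF}_p$, which is the sense of ``isomorphic over $\overline{\bbF}_p$'' relevant to the applications (Ribet's level-lowering theorem and the other downstream results are statements about residual representations up to semisimplification); when $\rho_{E,p}$ is irreducible this is a genuine isomorphism.

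For the converse I would use the Eichler--Shimura construction: to a weight-$2$ newform $f$ of level $N$ one attaches an abelian variety quotient $A_f$ of the Jacobian $J_0(N)$ of dimension equal to the degree over $\bbQ$ of the field generated by the Hecke eigenvalues of $f$. Under the hypothesis that $f$ has integer coefficients this degree is $1$, so $A_f$ is an elliptic curve $E$ of conductor $N$ whose $p$-adic representation is the one attached to $f$; reducing mod $p$ as above gives $\rho_{E,p} \cong \rho_{f,p}$ over $\overline{\bbF}_p$.

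There is no obstacle internal to this argument: all of the mathematical content lies in the two cited black boxes --- the Modularity Theorem of Wiles, Taylor--Wiles and Breuil--Conrad--Diamond--Taylor for the forward direction, and Eichler--Shimura theory for the converse. The only point requiring care is the precise meaning of ``isomorphic over $\overline{\bbF}_p$'': one should either restrict attention to $(E,p)$ with $\rho_{E,p}$ irreducible, or --- as is standard in level-lowering arguments --- interpret the isomorphism up to semisimplification, which is exactly what the Brauer--Nesbitt/Chebotarev comparison yields and all that is used in the sequel.
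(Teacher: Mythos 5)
The paper does not prove this theorem --- it is stated as a black-box citation of Wiles, Taylor--Wiles, and Breuil--Conrad--Diamond--Taylor, and no argument is offered. Your sketch is therefore not ``the same approach as the paper'' or ``a different approach''; it is a correct elaboration of why those cited results yield the statement as written. The reduction you give is sound: the strong form of modularity produces a weight-$2$ rational newform $g$ of level $N$ with $a_q(g) = q+1-\#E(\bbF_q)$, the Brauer--Nesbitt plus Chebotarev comparison gives agreement of the semisimplified residual representations, and the Eichler--Shimura construction (together with Carayol's theorem on conductors, which you implicitly use to get conductor exactly $N$) supplies the converse. You are also right to flag that ``isomorphic over $\overline{\bbF}_p$'' in the reducible case should be read up to semisimplification --- this is the standard convention and is exactly what is needed for Ribet's level-lowering theorem and the downstream applications in Sections 5--6 of the paper. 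One minor remark: for the converse, the Eichler--Shimura quotient produces one curve in an isogeny class; any other member works equally well for the stated conclusion since isogenous curves have the same conductor and the same semisimplified mod~$p$ representation.
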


Given some $A,B,C$, if $(x,y,z)$ is an integer solution to \eqref{generalizedfermat}, one can associate an elliptic curve $E / \bbQ$, called a \emph{Frey curve}, to the solution $(x,y,z)$. By the modularity theorem, this corresponds to a newform $f$. If the possibility of such a newform $f$ can be ruled out, for instance because the newspace of level $N$ is trivial, then this leads to a contradiction. A key ingredient in applying the modular method is the following theorem of Ribet, which allows us to reduce the level of the form $f$ under consideration.

\begin{theorem}[Level Lowering \cite{ribet1991levellowering}]\label{thm:level lowering}

Let $f$ be a weight $2$ newform of level $\ell N$ where $\ell$ does not divide $N$, and let $K_f$ be the number field generated by its coefficients. Suppose that $p$ is a rational prime and $\mathfrak{p}$ is a prime of $\overline{\bbQ}$ lying over $p$.  Suppose that $\rho_{f,\mathfrak{p}}$ is absolutely irreducible, and that either:
\begin{itemize}
    \item $\rho_{f,\mathfrak{p}}$ is unramified at $\ell$, or
    \item $\ell = p$ and $\rho_{f, \mathfrak{p}}$ is flat at $p$. 
\end{itemize}
Then there is a weight two newform $g$ of conductor $N$ such that $\rho_{f, \mathfrak{p}}$ is isomorphic to $\rho_{g, \mathfrak{p}}$ over $\overline{\bbF}_p$.

\end{theorem}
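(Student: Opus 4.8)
This is Ribet's level-lowering theorem; we use it as a black box and do not reprove it, but for orientation here is the plan one would follow. The idea is to realize the residual representation $\overline{\rho} := \rho_{f,\mathfrak p}$ geometrically, inside the $p$-torsion of a Jacobian of a modular curve, and then to transport it from level $\ell N$ down to level $N$.

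First I would invoke Eichler--Shimura together with Deligne's construction to attach to $f$ an abelian-variety quotient $A_f$ of $J_0(\ell N)$ in whose $p$-torsion $\overline{\rho}$ occurs as a Jordan--H\"older constituent; since $f$ is new of level $\ell N$, one arranges that $\overline{\rho}$ occurs in the $\ell$-new part $J := J_0(\ell N)^{\ell\text{-new}}$. The two degeneracy maps from $X_0(\ell N)$ down to $X_0(N)$ induce a map $\beta \colon J_0(N)^2 \to J_0(\ell N)$, and the goal is to show that $\overline{\rho}$ must in fact occur in $J_0(N)[p]$ --- which, by Eichler--Shimura again, is equivalent to $\overline{\rho} \cong \rho_{g,\mathfrak p}$ for some weight-two newform $g$ of level $N$.

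The heart of the matter is a local analysis at $\ell$. One studies the special fiber of the minimal model of $X_0(\ell N)$ at $\ell$: ignoring subtleties at primes dividing $N$, it is two copies of $X_0(N)$ over $\overline{\bbF}_\ell$ glued transversally at supersingular points, the $\ell$-new part has purely toric reduction there, and its component group is described combinatorially by the supersingular module. The hypothesis that $\overline{\rho}$ is unramified at $\ell$ forces $\overline{\rho}(I_\ell)$ to be trivial, and one shows --- using the action of Frobenius and monodromy on the toric part, Ihara's lemma to control $\ker\beta$ modulo $p$, and a multiplicity-one statement for $\overline{\rho}$ in the relevant Hecke module (where absolute irreducibility of $\overline{\rho}$ is used) --- that a genuinely $\ell$-new occurrence of an unramified $\overline{\rho}$ is impossible, so the associated Hecke eigensystem mod $p$ descends to level $N$. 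The case $\ell = p$ is treated separately: one cannot directly use good reduction of $X_0(N)$ at $\ell = p$, so the ``flat at $p$'' hypothesis together with the theory of finite flat group schemes (Raynaud, Fontaine--Laffaille) plays the role of the unramifiedness condition when comparing the local representation with the one cut out inside $J_0(N)[p]$.

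The step I expect to be the genuine obstacle --- and which was the main contribution of Ribet's work --- is extracting the correct component-group and monodromy statement at $\ell$, together with Ihara's lemma in the required generality: this is the ``Mazur's-principle''-type dichotomy that either forces ramification at $\ell$ or lets the level drop. The remaining pieces (Eichler--Shimura, the combinatorics of the supersingular module, and multiplicity one for absolutely irreducible $\overline{\rho}$) are comparatively formal once that local input is in hand, while the $\ell = p$ case needs the separate finite-flat-group-scheme input mentioned above.
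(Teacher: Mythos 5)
The paper does not prove this theorem: it is stated as a citation to Ribet's 1991 paper and used throughout as a black box, exactly as you do. Your orientation sketch (Eichler--Shimura realization in $J_0(\ell N)$, degeneracy maps, the Deligne--Rapoport description of the special fiber at $\ell$, Ihara's lemma and multiplicity one, Mazur's principle, and the finite-flat-group-scheme input for $\ell = p$) is an accurate summary of the actual proof, but since both you and the paper defer to Ribet for the argument, there is nothing to compare beyond that.
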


Many applications of the modular method can be improved conditional on the Frey-Mazur Conjecture, which says that for most primes $p$, elliptic curves over $\bbQ$ are determined up to isogeny by their mod $p$ Galois representation.

\begin{conjecture}[Frey-Mazur]\label{conj:Frey-Mazur}
Let $E_1, E_2 / \bbQ$ be elliptic curves. If the representations $\rho_{E_1, p}$ and $\rho_{E_2, p}$ are isomorphic for some $p > 17$, then $E_1$ and $E_2$ are isogenous over $\bbQ$. 
\end{conjecture}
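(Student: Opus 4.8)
The statement is the Frey--Mazur conjecture, which remains open; what follows is the natural line of attack, together with the point at which it stalls.

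The plan is to reformulate the hypothesis as the existence of a rational point on a surface. If $\rho_{E_1,p} \cong \rho_{E_2,p}$, then, after adjusting $E_2$ by a quadratic twist if necessary, there is a $G_{\bbQ}$-equivariant isomorphism $E_1[p] \xrightarrow{\sim} E_2[p]$ that either preserves the Weil pairing or scales it by a fixed nonsquare in $\bbF_p^{\times}$. In the first case the triple $(E_1, E_2, \text{iso})$ is classified by a $\bbQ$-rational point on the \emph{modular diagonal quotient surface} $Z^{+}_p$, the quotient of $X(p) \times X(p)$ by the diagonal action of $\SL_2(\bbF_p)$; in the second case one lands on the ``anti-symplectic'' variant $Z^{-}_p$. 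Isogenous pairs of curves, together with a short list of degenerate configurations (cusps, and pairs of CM curves sharing $p$-torsion), produce a thin, explicitly describable set of $\bbQ$-points, and the conjecture for a given $p$ is exactly the assertion that no others occur.

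First I would assemble the geometry of $Z^{\pm}_p$: by the work of Kani and Schanz these surfaces are of general type for all sufficiently large $p$, with controlled numerical invariants, and for the finitely many small $p$ their rational points can be listed directly --- this is where the known examples forcing the bound $p > 17$ appear, coming from congruences between elliptic curves of small conductor. Next I would bring in modularity: by Theorem \ref{thm:modularity} each $E_i$ corresponds to a weight-$2$ newform, the hypothesis says these two forms are congruent mod $p$, and Ribet's level-lowering result, Theorem \ref{thm:level lowering}, together with its refinements pins down the admissible levels. One would then hope to run a Mazur-style argument in the spirit of the classification of rational isogenies: force the relevant point on $Z^{\pm}_p$ to be cuspidal or to come from complex multiplication, and eliminate those possibilities by a finite computation valid for every $p > 17$.

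The main obstacle --- and the reason the conjecture is still open --- is the passage from ``general type'' to ``no unexpected rational points.'' For a curve this is supplied by Faltings' theorem together with the effective techniques (Chabauty, Mazur's method, symplectic-image arguments) that make modular curves tractable; for a surface it would require Lang's conjecture, for which there is no effective form and, more importantly, no version uniform in the parameter $p$. The lower-dimensional tools do not transfer to $Z^{\pm}_p$, so unconditionally one is reduced to handling individual small $p$ and bounded conductor ranges. This is precisely why the parts of Theorem \ref{Maintheorem3} invoking this statement are stated conditionally.
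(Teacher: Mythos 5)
You have correctly identified that this statement is an open conjecture, not a theorem: the paper offers no proof, and none exists. It is stated as Conjecture~\ref{conj:Frey-Mazur} precisely so that later results (parts (2) and (3) of Theorem~\ref{Maintheorem3}, and Theorems~\ref{thm: Neglected modularity result} and~\ref{thm:Second modular result}) can be proved conditionally on it. Your sketch of the standard line of attack --- reformulating congruences between elliptic curves as rational points on the Kani--Schanz modular diagonal quotient surfaces $Z^{\pm}_p$, noting these are of general type for large $p$, and observing that the missing ingredient is an effective, $p$-uniform form of Lang's conjecture for surfaces --- is an accurate account of why the problem is open, and your remark that the bound $p>17$ reflects known congruences between non-isogenous curves at small primes is also correct. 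Nothing further is required here; the paper's own usage matches your reading.
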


The importance of the Frey-Mazur Conjecture comes from the fact that it rules out the possibility of nontrivial level-lowering between rational forms when $p > 17$. If $E / \bbQ$ has conductor $N$  and $\rho_{E,p}$ arises from some newform $g$ of level $M < N$, then the form $g$ may or may not have integer (equivalently, rational) coefficients. If $g$ does not have integer coefficients, then the Frey-Mazur Conjecture offers no help. However, if $g$ has integer coefficients, then we have the following.

\begin{proposition}
Assume the Frey-Mazur Conjecture. If $E / \bbQ$ has conductor $N$ and $p > 17$, then there is no newform $g$ with integer coefficients and level $M < N$ such that $\rho_{E, p} \cong \rho_{g, p}$. 
\end{proposition}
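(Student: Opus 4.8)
The plan is to argue by contradiction, pushing the hypothesis through the \emph{converse} direction of the modularity theorem to manufacture a second elliptic curve, and then to invoke Frey--Mazur together with the fact that the conductor is an isogeny invariant. So suppose such a newform $g$ exists: it has integer coefficients, weight $2$, level $M < N$, and $\rho_{E,p} \cong \rho_{g,p}$ over $\overline{\bbF}_p$.

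First I would apply the converse part of Theorem \ref{thm:modularity} (equivalently, the Eichler--Shimura construction) to $g$. Since $g$ is a weight-$2$ newform of level $M$ with integer coefficients, it corresponds to an isogeny class of elliptic curves over $\bbQ$ of conductor $M$; pick a representative $E'/\bbQ$, so that $\rho_{E',p} \cong \rho_{g,p}$ over $\overline{\bbF}_p$ for every prime $p$. Next I would chain the isomorphisms to get $\rho_{E,p} \cong \rho_{g,p} \cong \rho_{E',p}$, and then apply Conjecture \ref{conj:Frey-Mazur}: as $p > 17$ and the mod-$p$ representations of $E$ and $E'$ are isomorphic, $E$ and $E'$ are isogenous over $\bbQ$. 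Finally, the conductor of an elliptic curve over $\bbQ$ is an isogeny invariant, so $N = \operatorname{cond}(E) = \operatorname{cond}(E') = M$, contradicting $M < N$. This finishes the argument.

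The only subtlety — and it is minor — is the bookkeeping about fields of definition of the residual representations: Theorem \ref{thm:modularity} supplies isomorphisms over $\overline{\bbF}_p$, whereas Conjecture \ref{conj:Frey-Mazur} is phrased for $\bbF_p$-representations. Two semisimple two-dimensional $\bbF_p[G_{\bbQ}]$-modules that become isomorphic over $\overline{\bbF}_p$ already have equal Brauer characters and hence are isomorphic over $\bbF_p$, so one may either restrict attention to semisimplifications (harmless here, since the level-lowering/Frey--Mazur mechanism only ever sees the representation up to semisimplification) or simply quote Frey--Mazur in the form needed. I expect this to be the main — indeed the only — point requiring care; the converse of modularity and the isogeny-invariance of the conductor are entirely standard.
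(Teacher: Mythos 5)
Your proof is correct and follows exactly the same route as the paper: apply the converse direction of modularity to produce an elliptic curve $E'$ of conductor $M$ with $\rho_{E',p} \cong \rho_{g,p}$, chain the isomorphisms, invoke Frey--Mazur to get an isogeny, and derive a contradiction from isogeny-invariance of the conductor. The extra remark about $\overline{\bbF}_p$ versus $\bbF}_p$ fields of definition is a sensible bit of caution, though the paper does not dwell on it.
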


\begin{proof}
Given some such newform $g$, the modularity theorem guarantees that $\rho_{g, p} \cong \rho_{E', p}$ for some elliptic curve $E' / \bbQ$ of conductor $M$. It follows that $\rho_{E, p} \cong \rho_{E',p}$, so by the Frey-Mazur Conjecture, $E$ and $E'$ are isogenous. But this implies that $E$ and $E'$ have the same conductor, yielding a contradiction. \qedhere
\end{proof}

\section{Applications of Modularity}\label{sec: Applications of modularity}

We conclude by proving several more uniform results, applying to all newforms of sufficiently large weight with integer coefficients. The results in this section rely on the modular method, and many are conditional on the Frey-Mazur Conjecture.

The key observation in applying the modular method to newform coefficients is that for any value $\alpha$, if $a_f(p^2) = \alpha$ or $a_f(p^4) = \alpha$, then the equations arising from Corollary \ref{cor: recurrencesgivediophantineequations} can be interpreted as Fermat equations of the form
\begin{equation}\label{eq:General(n,n,2)equation}
AX^n + B Y^n = C Z^2.
\end{equation}
In general, equations of this form will have many integer solutions. For example, suppose that $n$ is odd and that $C = 1$. Then for any $a,b$, the triple $(ac, bc, c^{\frac{n+1}{2}})$ is a solution, where $c = A a^n + B b^n$. This naturally leads us to restrict the set of solutions under consideration.

\begin{definition}
Let $(a,b,c)$ be an integer solution to \eqref{eq:General(n,n,2)equation}. We say that $(a,b,c)$ is a \emph{primitive} solution if the elements $Aa, Bb,$ and $Cc$ are pairwise coprime.
\end{definition}

\begin{proposition}\label{prop:CandHFermatEquations}
Let $f$ be a newform with integer coefficients, even integer weight $2k$, and level $N$. Then the following are true.
\begin{enumerate}
    \item Suppose that $p \nmid N$ is a prime and that $a_f(p^2) = \alpha$. If $\ell \mid 2k-1$, then $(X,Y,Z) = (p^{(2k-1) / \ell}, 1, a_f(p))$ is a solution to the twisted Fermat equation
\begin{equation}\label{eq:CFermatEquation}
    X^{2k-1} + \alpha Y^{2k-1} = Z^2.
\end{equation}
    If $p$ does not divide $\alpha$, then the solution is primitive.
    \item If $p \nmid N$ is prime and $a_f(p^4) = \alpha$, then $(X,Y,Z) = (p^{2(2k-1) / \ell}, 1, 2 a_f(p)^2 - 3 p^{2k-1})$ is a solution to the twisted Fermat equation
\begin{equation}\label{eq:HFermatequation}
    5 X^{2k-1} + 4 \alpha Y^{2k-1} = Z^2.
\end{equation}
    If $p$ does not divide $2 \alpha$ and $5$ does not divide $\alpha$, then the solution is primitive.
\end{enumerate}
\end{proposition}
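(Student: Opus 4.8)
The plan is to extract both solutions straight from Corollary \ref{cor: recurrencesgivediophantineequations}, using the divisor $\ell \mid 2k-1$ only to repackage a power of $p$ as a perfect $\ell$-th power, and then to verify primitivity by a short $\gcd$ argument using the defining Diophantine relations. Neither half should require anything beyond elementary manipulation.

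For the existence of the solutions, write $2k-1 = \ell m$ with $m \geq 1$. In case (1), Corollary \ref{cor: recurrencesgivediophantineequations}(1) gives $a_f(p)^2 = p^{2k-1} + \alpha = (p^m)^\ell + \alpha \cdot 1^\ell$, which is exactly the statement that $(p^{(2k-1)/\ell}, 1, a_f(p))$ solves \eqref{eq:CFermatEquation} — that is, \eqref{eq:General(n,n,2)equation} with $n = \ell$, $A = 1$, $B = \alpha$, $C = 1$. In case (2), put $w := 2 a_f(p)^2 - 3 p^{2k-1}$; Corollary \ref{cor: recurrencesgivediophantineequations}(2) gives $w^2 = 5 p^{2(2k-1)} + 4\alpha = 5 (p^{2m})^\ell + 4\alpha \cdot 1^\ell$, so $(p^{2(2k-1)/\ell}, 1, w)$ solves \eqref{eq:HFermatequation}, i.e.\ \eqref{eq:General(n,n,2)equation} with $n = \ell$, $A = 5$, $B = 4\alpha$, $C = 1$. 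No further work is needed here.

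The substantive step is primitivity. For (1) one must show $p^m$, $\alpha$, $a_f(p)$ are pairwise coprime: a prime dividing two of them is forced to be $p$ (immediate if it divides $p^m$, and otherwise it divides $a_f(p)^2 - \alpha = p^{2k-1}$), and then $\alpha = a_f(p)^2 - p^{2k-1}$ forces $p \mid \alpha$, against $p \nmid \alpha$. For (2) the three numbers are $5 p^{2m}$, $4\alpha$, $w$; the idea is that the two rewritings $w^2 - 4\alpha = 5 p^{2(2k-1)}$ and $w^2 - 5 p^{2(2k-1)} = 4\alpha$ confine any common prime divisor of two of the three to $\{2, 5, p\}$, after which each value is eliminated: the hypothesis $p \nmid 2\alpha$ makes $p$ odd, so $2$ could only be shared between $4\alpha$ and $w$, contradicting $w^2 - 4\alpha = 5 p^{2(2k-1)}$; a shared factor $5$ would give $5 \mid 4\alpha$ and hence $5 \mid \alpha$, against the hypothesis; a shared factor $p$ would give $p \mid 4\alpha$, hence $p \mid \alpha$ (as $p$ is odd), again against the hypothesis.

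Nothing here is deep, so the step I would expect to need the most care is the case-check in part (2), where the three potentially bad primes $2$, $5$, $p$ all have to be handled at once and one must confirm that the two hypotheses $p \nmid 2\alpha$ and $5 \nmid \alpha$ are precisely what is required (in particular, no separate hypothesis $p \ne 5$ is needed). The configurations these hypotheses exclude — $p = 2$, $p \mid \alpha$, $5 \mid \alpha$ — are exactly those for which the Frey curve later attached to \eqref{eq:HFermatequation} would fail to be minimal or semistable, so there is no reason to try to weaken them.
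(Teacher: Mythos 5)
Your proof is correct and follows essentially the same path as the paper: read off the solutions directly from Corollary \ref{cor: recurrencesgivediophantineequations}, then establish primitivity by a $\gcd$ argument using the defining relation, with the hypotheses $p \nmid \alpha$ (resp.\ $p \nmid 2\alpha$ and $5 \nmid \alpha$) doing the work. The one stylistic difference is that the paper first records the observation that pairwise coprimality of $(Aa, Bb, Cc)$ follows once $\gcd(Aa, Bb, Cc) = 1$ (any prime dividing two terms of \eqref{eq:General(n,n,2)equation} divides the third), which collapses the primitivity check in case (2) to simply $\gcd(5p, 4\alpha) = 1$, whereas you carry out the pairwise casework by hand; and you also correctly treat the exponent in \eqref{eq:CFermatEquation}--\eqref{eq:HFermatequation} as $\ell$ rather than the literal $2k-1$, as the subsequent applications (e.g.\ in Theorem \ref{thm: Neglected modularity result}) require.
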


\begin{proof}
It follows immediately from Corollary \ref{cor: recurrencesgivediophantineequations} that the given equations are satisfied, so we only need to show that the solutions are primitive. Observe that in order to show the solution $(a,b,c)$ is primitive, it's enough to show that $\gcd(Aa, Bb, Cc) = 1$. Indeed, if any prime divides two of the factors, then it divides two of the terms in \eqref{eq:General(n,n,2)equation}, and therefore divides the third.

First, consider case (1). In this case, we need to show that $\gcd(p, a_f(p), \alpha) = 1$. Since $p$ does not divide $\alpha$, this is immediate from what we wrote above.

Now, consider case (2). In this case, we need to show that $\gcd(5p, 4 \alpha, 2a_f(p)^2 - 3 p^{2k-1}) = 1$. Since $p$ does not divide $2 \alpha$ and $5$ does not divide $\alpha$, it follows that the first two terms are coprime, which once again completes the argument. \qedhere
\end{proof}

In the paper \cite{bennett2004ternary}, Bennett and Skinner associate a Frey curve to any primitive solution of \eqref{eq:General(n,n,2)equation}. These Frey curves will form the basis for our work in this section.

\begin{lemma}[\cite{bennett2004ternary}, Lemma 2.1]\label{lem:Freycurves}
Let $A,B,C$ be any nonzero integers, where $C$ is squarefree. Suppose that $(a,b,c)$ is a primitive solution to \eqref{eq:General(n,n,2)equation}, where $n$ is an odd prime. Then there is an associated elliptic curve $E = E(a,b,c)$, with the conductor $N(E)$ given by 
    $$ N(E) = 2^{\alpha} C^2 \prod_{p \mid ab AB} p, $$
    where $\alpha \in \{-1,0,\ldots, 6 \}$. If $4 \mid B$, then $\alpha \leq 4$. If $\ord_2(B) = 2$, then we have
    $$ \alpha = \begin{cases} 
    1 & b \equiv -BC / 4 \pmod 4, \\
    2 & b \equiv BC  / 4 \pmod 4.
    \end{cases}$$
\end{lemma}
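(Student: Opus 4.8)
The plan is to exhibit the Frey curve $E(a,b,c)$ by hand and then read off its conductor prime by prime via Tate's algorithm. First one normalizes: since $n$ is odd and $(a,b,c)$ is primitive, one may flip signs of $a,b,c$ and, if needed, interchange the two $n$-th power terms, so as to fix the parity of $c$ and pin down $Bb^n$ modulo $4$. In the generic case one then takes the model
$$ E(a,b,c): \quad y^2 = x^3 + 2cC\,x^2 + BCb^n\,x, $$
and when $4 \mid B$ or $\operatorname{ord}_2(B) = 2$ one replaces it by the variant obtained from a substitution of the form $(x,y) \mapsto (4x, 8y)$ (possibly composed with a translation $x \mapsto x + cC$), chosen so the Weierstrass equation becomes minimal, or as close to minimal as possible, at $2$. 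These are exactly the models of Bennett--Skinner.

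Next I would compute the standard invariants of the generic model. Using $Cc^2 = Aa^n + Bb^n$ one gets
$$ c_4 = 16\,C\,(4Aa^n + Bb^n), \qquad \Delta = 2^6\,A\,B^2\,C^3\,a^n\,b^{2n}. $$
In particular the primes of bad reduction all divide $2ABCab$, which already explains the shape $N(E) = 2^{\alpha} C^2 \prod_{p \mid abAB} p$; the factor $C^2$ will come from additive reduction at the (odd) primes dividing the squarefree number $C$.

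The core of the argument is the local analysis at odd primes, where primitivity does all the work. Note first that pairwise coprimality of $Aa, Bb, Cc$ forces $\gcd(A,B) = \gcd(a,b) = 1$ and, for $p \mid C$, that $p \nmid ABab$. At an odd $p \mid ab$ with $p \nmid 2ABC$: exactly one of $Aa^n, Bb^n$ is divisible by $p$, so $p \nmid 4Aa^n + Bb^n$, hence $p \nmid c_4$ while $p \mid \Delta$; thus $E$ has multiplicative reduction and $v_p(N(E)) = 1$. At an odd $p \mid AB$ (so $p \nmid C$): in every sub-case ($p$ dividing or not dividing the corresponding $a$ or $b$) the same computation gives $p \mid \Delta$, $p \nmid c_4$, hence multiplicative reduction and conductor exponent $1$. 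At an odd $p \mid C$: squarefreeness gives $v_p(\Delta) = 3$ and $p \nmid ab$, while $v_p(c_4) \geq 1$, so Tate's algorithm yields additive (potentially good) reduction with tame conductor exponent $2$, producing the $C^2$ factor. Assembling these gives the odd part of $N(E)$.

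The remaining, genuinely fiddly, step is Tate's algorithm at $p = 2$ on the selected model. Writing $\alpha := v_2(N(E)) - 2\,v_2(C)$, one shows $\alpha \in \{-1, 0, 1, \dots, 6\}$, and the finer claims --- $\alpha \leq 4$ when $4 \mid B$, and $\alpha \in \{1,2\}$ according to $b \bmod 4$ when $\operatorname{ord}_2(B) = 2$ --- follow by tracking which step of the algorithm terminates, which is governed by $v_2(c_4), v_2(c_6), v_2(\Delta)$ together with the residues of $b$ and $c$ modulo $4$. I expect this $2$-adic case analysis to be the main obstacle: it is not conceptually hard, but it is precisely the reason several models for $E$ must be introduced (each engineered to be near-minimal at $2$), and keeping the case split organized is where the labor lies. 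Since all of this is carried out in full in \cite{bennett2004ternary}, in practice one simply invokes that reference.
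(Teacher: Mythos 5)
The paper does not prove this lemma; it is imported as a cited result from Bennett--Skinner \cite{bennett2004ternary}, so there is no in-paper proof to compare against. Your reconstruction matches the argument one finds in that reference: the generic Frey model $y^2 = x^3 + 2cC\,x^2 + BCb^n\,x$ (with modified models near $2$ when $\ord_2(B)$ is small), the invariant computations $c_4 = 16\,C(4Aa^n + Bb^n)$ and $\Delta = 2^6 A B^2 C^3 a^n b^{2n}$ (both of which check out), the odd-prime analysis via primitivity --- for odd $p \mid abAB$, exactly one of $Aa$, $Bb$ is divisible by $p$, which forces $p \nmid 4Aa^n + Bb^n$ and hence $p\nmid c_4$ with $p\mid\Delta$, so multiplicative reduction with conductor exponent $1$; while squarefreeness of $C$ and $\gcd(Cc, ABab)=1$ give $v_p(\Delta)=3$, $v_p(c_4)\geq 1$, $v_p(j)=0$, so additive type III with conductor exponent $2$ at odd $p\mid C$ --- and finally the $2$-adic Tate-algorithm case split that pins down $\alpha$ and accounts for the various sub-cases in the lemma. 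You correctly identify the $2$-adic bookkeeping as the part that carries the real weight and that drives the need for several models, and since the paper itself simply invokes the citation for the entire lemma, deferring to \cite{bennett2004ternary} for that case analysis is exactly what the paper does.
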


\begin{lemma}[\cite{bennett2004ternary}, Lemmas 3.2 and 3.3]\label{lem:artinconductor}
Suppose that $n \geq 7$ is a prime and that $\rho_{E,n}$ is associated to a primitive solution $(a, b, c)$ to \ref{eq:General(n,n,2)equation} with $ab \neq \pm 1$. If $n \nmid ABC$, then define
$$N_n(E) := 2^\beta \prod_{p|C, p \nmid n} p^2 \prod_{q|AB, q \neq n} q,$$
where 
$$\beta = \begin{cases} 1 & ab \equiv 0 \pmod{2} \text{ and } AB \equiv 1 \pmod{2} \\ \alpha & \text{ otherwise.}\end{cases}$$
The representation $\rho_{E,n}$ arises from a cuspidal newform of weight $2$, level $N_n(E)$, and trivial nebentypus character. 
\end{lemma}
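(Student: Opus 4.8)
The plan is to run the modular method: deduce the existence of the newform by combining the modularity theorem (Theorem~\ref{thm:modularity}) with Ribet's level-lowering theorem (Theorem~\ref{thm:level lowering}), using the explicit description of the Frey curve $E = E(a,b,c)$ and its conductor from Lemma~\ref{lem:Freycurves}. First, since $E/\bbQ$ has conductor $N(E) = 2^{\alpha} C^2 \prod_{p \mid abAB} p$, Theorem~\ref{thm:modularity} produces a weight-$2$ newform $f_0$ of level $N(E)$ with $\rho_{E,n} \cong \rho_{f_0,n}$ over $\overline{\bbF}_n$. The nebentypus is automatically trivial: by the Weil pairing, $\det \rho_{E,n}$ is the mod-$n$ cyclotomic character, and for a weight-$2$ form this forces the trivial character. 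It then remains to strip from the level $N(E)$ every prime at which $\rho_{E,n}$ is unramified (first clause of Theorem~\ref{thm:level lowering}), together with the prime $n$ itself when $\rho_{E,n}$ is finite flat there (second clause). For Ribet's theorem to apply we also need absolute irreducibility of $\rho_{E,n}$.

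The heart of the argument is the local analysis of $E$. Working from the explicit Weierstrass model, one checks that at every odd prime $q \nmid ABC$ with $q \mid ab$, the curve $E$ has multiplicative reduction and $n \mid \ord_q(\Delta_{\min}(E))$ — this divisibility is precisely where the $n$-th powers in the solution $(a,b,c)$ enter. By the theory of the Tate curve, $n \mid \ord_q(\Delta_{\min})$ forces $\rho_{E,n}$ to be unramified at $q$, so Theorem~\ref{thm:level lowering} lets us remove $q$ from the level. Iterating over all such $q$ removes the factor $\prod_{q \mid ab,\, q \nmid 2nABC} q$ from $N(E)$, which is exactly why $N_n(E)$ keeps only $\prod_{q \mid AB,\, q \neq n} q$ and $\prod_{p \mid C,\, p \nmid n} p^2$: at primes dividing $AB$ the reduction can be additive, and the primes dividing $C$ enter the conductor to the second power through $C^2$, so $\rho_{E,n}$ need not be unramified and these primes survive. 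Finally, the hypothesis $n \nmid ABC$ ensures these computations are undisturbed at $n$; if in addition $n \mid ab$, then $n \mid \ord_n(\Delta_{\min})$ shows $\rho_{E,n}$ is finite flat at $n$, so the second clause of Theorem~\ref{thm:level lowering} removes $n$ from the level as well.

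The prime $2$ requires a separate case split according to the parities of $ab$ and $AB$, and (since $C$ is squarefree) whether $2 \mid C$. Lemma~\ref{lem:Freycurves} already identifies the conductor exponent $\alpha \in \{-1, 0, \dots, 6\}$ at $2$. When $ab$ is even and $AB$ is odd, inspection of the Kodaira type of $E$ at $2$ together with $n \mid \ord_2(\Delta_{\min})$ shows $\rho_{E,n}$ is unramified or finite flat at $2$, so one may lower the exponent at $2$ all the way down to $1$; in all other cases no such lowering is available and the exponent remains $\alpha$. This yields the stated two-case formula for $\beta$.

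The main obstacle is establishing absolute irreducibility of $\rho_{E,n}$ for $n \geq 7$, which Ribet's theorem needs. This is exactly where the hypothesis $ab \neq \pm 1$ is used: it excludes the degenerate solutions whose Frey curve carries a rational $n$-isogeny. The general case is handled via Mazur's classification of rational isogenies of prime degree, supplemented by a direct check of the finitely many exceptional curves. The remaining work — carefully reading off Kodaira types and conductor exponents at $2$ and at the primes dividing $AB$ — is routine but lengthy, and we follow the treatment of~\cite{bennett2004ternary}.
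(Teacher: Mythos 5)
The paper cites this as Lemmas 3.2 and 3.3 of Bennett--Skinner (\cite{bennett2004ternary}) and gives no proof of its own, so the cited source is the only standard of comparison, and your sketch reproduces that argument faithfully: modularity at level $N(E)$, the Tate-curve criterion $n \mid \ord_q \Delta_{\min}$ at odd primes $q \mid ab$ coprime to $nABC$ so that Ribet's theorem removes them, the finite-flat clause for removing $n$ when $n \mid ab$, the parity case split at $2$ that produces $\beta$, and Mazur's isogeny theorem together with $ab \neq \pm 1$ for absolute irreducibility. One small imprecision worth correcting: at odd primes dividing $AB$ but not $C$ the Frey curve has \emph{multiplicative} reduction (conductor exponent $1$), not possibly additive reduction as you suggest; those primes persist in $N_n(E)$ because $n \nmid \ord_q \Delta_{\min}$ there, so $\rho_{E,n}$ stays ramified, not because of a change in reduction type.
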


As we noted above, the Frey-Mazur Conjecture only rules out level-lowering between rational newforms. To deduce consequences for specific Diophantine equations, we also need to rule out the possibility of level-lowering from a rational form to an irrational form. In certain cases, we can do this directly. In other cases, we can apply the following (unconditional) result, which tells us that such level-lowering cannot happen whenever the levels in question are sufficiently large. Let $\psi$ denote the Dedekind $\psi$-function:
$$\psi(N) = N \prod_{p \mid N} \left( 1 + \frac{1}{p} \right).$$
\begin{theorem}[\cite{silliman2015powers}, Theorem 4.2]\label{thm:boundsforirrationallevel-lowering}
Let $f$ be an irrational newform of conductor $N$ with coefficients in a number field $K_f / \bbQ$. Suppose that for some prime $\mathfrak{p} / p$ of $K_f$ and some elliptic curve $E / \bbQ$, the representations $\rho_{E, p}$ and $\rho_{f, \mathfrak{p}}$ are isomorphic. Then
$$ p \leq \psi(N)^{1 + \psi(N) / 12}. $$
\end{theorem}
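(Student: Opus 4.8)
The plan is to combine the congruence of Frobenius traces forced by $\rho_{E,p} \cong \rho_{f,\mathfrak{p}}$ with an effective Sturm bound controlling the field $K_f$. I would first record the two standard Galois-theoretic inputs. Attached to the weight $2$ newform $f$ of level $N$ there is a (semisimple) representation $\rho_{f,\mathfrak{p}}$ whose Artin conductor away from $p$ divides $N$, and with $\operatorname{tr}\rho_{f,\mathfrak{p}}(\mathrm{Frob}_\ell) \equiv a_f(\ell) \pmod{\mathfrak{p}}$ for every prime $\ell \nmid Np$. Attached to $E$ there is $\rho_{E,p}$ valued in $\GL_2(\bbF_p)$ whose trace of $\mathrm{Frob}_\ell$ is $a_\ell(E)$, an integer of absolute value at most $2\sqrt\ell$, at every prime $\ell$ of good reduction; and at an unramified bad prime $\ell\neq p$ the trace is again the reduction of an integer of absolute value at most $\ell+1$ (it is $\pm(\ell+1)$ in the multiplicative case, and $a_\ell$ of the good model in the potentially-good case). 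Consequently, for every prime $\ell \nmid Np$ with $\ell \neq p$ and $\ell\nmid N$, the class $a_f(\ell) \bmod \mathfrak{p}$ equals $\beta_\ell \bmod p$ for an explicit integer $\beta_\ell$ with $|\beta_\ell| \le \ell+1$; equivalently $\mathfrak{p} \mid \bigl(a_f(\ell) - \beta_\ell\bigr)$.

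Next I would use Sturm's theorem to locate a small prime $\ell_0$ at which $a_f(\ell_0)$ is irrational. Since $f$ is irrational there is $\sigma \in \operatorname{Gal}(\overline{\bbQ}/\bbQ)$ with $f^\sigma \neq f$, and $f - f^\sigma$ is a nonzero element of $S_2(\Gamma_0(N))$; by the valence formula its first nonvanishing Fourier coefficient occurs at an index $n \le \tfrac{2}{12}[\SL_2(\bbZ):\Gamma_0(N)] = \tfrac16\psi(N)$, so $a_n(f) \neq a_n(f^\sigma)$ for some such $n$. Multiplicativity together with the Hecke recursion in weight $2$ then forces $a_\ell(f) \neq \sigma(a_\ell(f))$ for some prime $\ell = \ell_0 \le \tfrac16\psi(N)$; since in weight $2$ the coefficient at a prime dividing $N$ lies in $\{0,\pm 1\}\subseteq\bbQ$ and is therefore $\sigma$-fixed, we automatically get $\ell_0 \nmid N$. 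In particular $a_f(\ell_0) \notin \bbQ$.

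Now I would combine the two. If $\ell_0 = p$ then $p \le \tfrac16\psi(N)$ and there is nothing more to prove; otherwise $a_f(\ell_0) - \beta_{\ell_0}$ is a nonzero algebraic integer lying in $\mathfrak{p}$, so $p$ divides $\bigl|\operatorname{N}_{K_f/\bbQ}(a_f(\ell_0) - \beta_{\ell_0})\bigr| = \prod_{\tau}\,\bigl|\tau(a_f(\ell_0)) - \beta_{\ell_0}\bigr|$, the product over the $d = [K_f:\bbQ]$ embeddings of $K_f$. By the Ramanujan--Petersson bound applied to the Galois-conjugate newforms $f^\tau$ one has $|\tau(a_f(\ell_0))| \le 2\sqrt{\ell_0}$, so each factor is at most $2\sqrt{\ell_0} + (\ell_0+1) \le 4\ell_0$, giving $p \le (4\ell_0)^d$. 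Finally $d \le \dim S_2(\Gamma_0(N))$, which by the dimension formula is at most $1 + \tfrac{1}{12}\psi(N)$, and $4\ell_0 \le \tfrac{2}{3}\psi(N) \le \psi(N)$; hence $p \le \psi(N)^{1 + \psi(N)/12}$.

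I expect the main obstacle to be the bookkeeping at the bad primes of $E$: one must verify that at the prime $\ell_0$ produced by the Sturm step, which is unramified for $\rho_{E,p}$ precisely because the away-from-$p$ conductor of $\rho_{f,\mathfrak{p}}$ divides $N$ while $\ell_0 \nmid N$, the trace of $\mathrm{Frob}_{\ell_0}$ is uniformly the reduction of an integer bounded by $\ell_0+1$ — covering the multiplicative-reduction case, the additive-but-potentially-good case, and the harmless sub-case $\ell_0 = p$ all at once. Granting that, the remainder is the standard Sturm-bound-plus-ideal-norm argument, and no use of the modularity of $E$ itself is needed.
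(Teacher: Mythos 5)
Your proposal is correct, and it follows the same Sturm-bound-plus-norm strategy that Silliman and Vogt use to prove their Theorem 4.2 (this paper only cites the result and does not reprove it). The essential ingredients — locate a prime $\ell_0 \nmid N$ with $\ell_0 \le \tfrac16\psi(N)$ at which $a_f(\ell_0)\notin\bbQ$ via the valence/Sturm bound together with multiplicativity and the Hecke recursion, then observe $p\mid \operatorname{N}_{K_f/\bbQ}\bigl(a_f(\ell_0)-\beta_{\ell_0}\bigr)$ and bound each archimedean factor by $4\ell_0$ using Ramanujan--Petersson and the Hasse/Tate-curve bound on traces of $\mathrm{Frob}_{\ell_0}$ — are exactly the right ones, and your bookkeeping (including $[K_f:\bbQ]\le g(X_0(N))\le 1+\psi(N)/12$ and the separate treatment of $\ell_0=p$) closes the argument with room to spare. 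One small simplification you could make: for $p>3$, an elliptic curve over $\bbQ$ with additive, potentially good reduction at $\ell_0$ always has ramified $\rho_{E,p}$ at $\ell_0$ (the image of inertia in $\Aut(T_pE)$ has order dividing $24$, hence prime to $p$, so it injects into $\Aut(E[p])$), so the ``potentially-good'' subcase you flag as a possible obstacle simply does not occur once you dispose of $p\le 3$ trivially; only good and split/nonsplit multiplicative reduction with $p\mid\operatorname{ord}_{\ell_0}(j_E)$ remain, both already covered by your bound $|\beta_{\ell_0}|\le \ell_0+1$.
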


We define the following two functions:
$$f_C(m) = \max \left(17, \psi\left(2^6 \prod_{q \mid m} q \right)^{1+\psi(2^6 \prod_{q \mid m} q) / 12}\right), $$
and
$$ f_H(m) = \max\left(17, \psi\left(2^4 \prod_{q \mid 5 m} q\right)^{1 +\psi(2^4 \prod_{q \mid 5 m} q) / 12}\right).  $$

\begin{theorem}\label{thm: Neglected modularity result}
Let $m$ be any nonzero integer, and let $f$ be a newform with integer coefficients and weight $2k$. Assuming the Frey-Mazur Conjecture, we have:

\begin{enumerate}
    \item If $2k-1$ is divisible by a prime $\ell > f_C(m)$ and $p$ is a prime not dividing $m$, then $a_f(p^2) \neq m$.
    \item If $2k-1$ is divisible by a prime $\ell > f_H(m)$, $p$ is a prime not dividing $2m$, and $m$ is coprime to $5$, then $a_f(p^4) \neq m$.
\end{enumerate}
\end{theorem}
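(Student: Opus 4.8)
The plan is to combine Proposition~\ref{prop:CandHFermatEquations} with the Frey-curve machinery of Bennett--Skinner (Lemmas~\ref{lem:Freycurves} and~\ref{lem:artinconductor}), Ribet level-lowering, and the two ``no level-lowering'' inputs: the Frey--Mazur Conjecture (which kills level-lowering onto rational newforms, via the Proposition after Conjecture~\ref{conj:Frey-Mazur}) and Theorem~\ref{thm:boundsforirrationallevel-lowering} (which kills level-lowering onto irrational newforms once $p$ is large). Suppose $a_f(p^2)=m$ in case (1). Pick a prime $\ell \mid 2k-1$ with $\ell > f_C(m)$; in particular $\ell \geq 19$, so $\ell \geq 7$ and the Bennett--Skinner hypotheses on the exponent apply. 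Since $p \nmid m$, Proposition~\ref{prop:CandHFermatEquations}(1) gives a \emph{primitive} solution $(X,Y,Z)=(p^{(2k-1)/\ell},1,a_f(p))$ to $X^\ell + m Y^\ell = Z^2$ (rewriting \eqref{eq:CFermatEquation} with exponent $\ell$ using that $\ell \mid 2k-1$); here $A=1$, $B=m$, $C=1$, which is squarefree after absorbing square factors of $m$ into $Y$ — more carefully, I would first reduce to the case where the ``$C$'' of \eqref{eq:General(n,n,2)equation} is squarefree by pulling the square part of $m$ out of the $Y$-variable, exactly as in the setup of Theorem~\ref{thm:barrosthueequations}; this is a routine bookkeeping step. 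Note $ab = Y \cdot X = p^{(2k-1)/\ell} \neq \pm 1$ provided $2k-1 > \ell$, and the degenerate case $2k-1 = \ell$, $p$ arbitrary needs the separate elementary argument (or one observes $ab = p \neq \pm 1$ since $p$ is prime), so the Bennett--Skinner $ab \neq \pm 1$ hypothesis holds.

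Next, attach the Frey curve $E = E(a,b,c)$. By Lemma~\ref{lem:artinconductor}, since $\ell \nmid ABC = m$ (because $\ell > f_C(m) \geq$ the largest prime factor of $m$, as $f_C(m)$ dominates $\psi$ of a number divisible by every prime factor of $m$), the representation $\rho_{E,\ell}$ arises from a weight-$2$ newform $g$ of level $N_\ell(E)$ with trivial nebentypus. Inspecting the conductor formula, $N_\ell(E) = 2^\beta \prod_{q \mid AB,\, q \neq \ell} q = 2^\beta \prod_{q \mid m} q$ divides $2^6 \prod_{q \mid m} q$, which is the argument of $\psi$ inside the definition of $f_C(m)$. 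Now there are two cases for the newform $g$. If $g$ has irrational coefficients, Theorem~\ref{thm:boundsforirrationallevel-lowering} forces $\ell \leq \psi(N_\ell(E))^{1+\psi(N_\ell(E))/12} \leq \psi(2^6\prod_{q\mid m}q)^{1+\psi(2^6\prod_{q\mid m}q)/12} \leq f_C(m)$, contradicting $\ell > f_C(m)$. If $g$ has rational (integer) coefficients, then by the Proposition following Conjecture~\ref{conj:Frey-Mazur} (and $\ell > 17$, which holds), $g$ cannot have level strictly smaller than the conductor of $E$; so one must check that $N_\ell(E) < N(E)$, i.e. that level-lowering actually dropped the level. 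This holds because $N(E)$ (from Lemma~\ref{lem:Freycurves}) contains the factor $\prod_{p \mid abAB} p$, which includes the prime $p$ dividing $ab = p^{(2k-1)/\ell}$, whereas $N_\ell(E)$ omits $\ell$ but — crucially — the Frey curve has multiplicative reduction at $p$ and $\ell \mid \ord_p$(discriminant) forces $\rho_{E,\ell}$ unramified at $p$, so Ribet's theorem strips $p$ from the level; hence $N_\ell(E) \mid N(E)/p < N(E)$, the desired contradiction.

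Case (2) is identical in structure: take $\ell \mid 2k-1$ with $\ell > f_H(m)$, use Proposition~\ref{prop:CandHFermatEquations}(2) to get a primitive solution to $5X^\ell + 4mY^\ell = Z^2$ (now $A=5$, $B=4m$, $C=1$ after squarefree reduction; the hypotheses $p \nmid 2m$ and $5 \nmid m$ are exactly what Proposition~\ref{prop:CandHFermatEquations} needs for primitivity), attach the Bennett--Skinner Frey curve, and note its level after level-lowering divides $2^4 \prod_{q \mid 5m} q$ — the factor $2^4$ rather than $2^6$ comes from the $4 \mid B$ clause in Lemma~\ref{lem:Freycurves} since $4 \mid 4m$ — which matches the argument of $\psi$ in $f_H(m)$. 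The same dichotomy (irrational $g$: Theorem~\ref{thm:boundsforirrationallevel-lowering} bound violated; rational $g$: Frey--Mazur forbids the level drop at $p$) gives the contradiction.

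\textbf{Main obstacle.} The genuinely delicate point is the rational-coefficients branch: one must verify that the level \emph{strictly} drops, i.e. that $\rho_{E,\ell}$ is genuinely unramified at the prime $p$ (and at no other prime does a spurious new factor appear), so that $N_\ell(E)$ is a \emph{proper} divisor of $N(E)$ and the Frey--Mazur-based non-level-lowering proposition applies. This requires checking that $E$ has multiplicative reduction at $p$ with $\ell \mid v_p(\Delta_E)$ — which follows from $v_p(\Delta_E)$ being a multiple of $(2k-1)/\ell \cdot \ell = 2k-1$ times something, traced through the Bennett--Skinner discriminant formula — and that $p \neq \ell$ (guaranteed since $p \nmid m$ but we also need $p \nmid 2k-1$; if $p = \ell$ one instead uses the ``flat at $p$'' clause of Ribet, since the Frey curve has good or multiplicative reduction suited to that case). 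Handling the $p = \ell$ sub-case and the low-conductor degenerate cases ($2k-1 = \ell$, small $m$) cleanly is where the real care is needed; everything else is substituting the conductor bounds into the definitions of $f_C$ and $f_H$.
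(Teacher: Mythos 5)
Your proposal is correct and follows essentially the same line as the paper: pass from $a_f(p^2)=m$ (resp.\ $a_f(p^4)=m$) to a primitive solution of $X^\ell+mY^\ell=Z^2$ (resp.\ $5X^\ell+4mY^\ell=Z^2$) via Proposition~\ref{prop:CandHFermatEquations}, attach the Bennett--Skinner Frey curve, level-lower to level at most $2^6\prod_{q\mid m}q$ (resp.\ $2^4\prod_{q\mid 5m}q$) via Lemma~\ref{lem:artinconductor}, use Theorem~\ref{thm:boundsforirrationallevel-lowering} to rule out irrational targets since $\ell>f_C(m)$ (resp.\ $f_H(m)$), and then use Frey--Mazur to force $N(E)$ to equal the lowered level, contradicting $p\nmid m$. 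The extra caveats you raise (squarefree $C$, $ab\neq\pm1$, $p=\ell$) are already handled inside the cited lemmas: here $C=1$ is already squarefree, $ab=p^{(2k-1)/\ell}\geq 2$ automatically, and the $p=\ell$ case is absorbed into the hypotheses of Lemma~\ref{lem:artinconductor}.
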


\begin{proof}
We will prove (1); the proof of (2) is similar. Suppose that the given conditions hold. By Theorem \ref{thm:propertiesofnewforms}, we may assume that $p \nmid N$. By Proposition \ref{prop:CandHFermatEquations}, we obtain a primitive solution $(p^{(2k-1) / \ell}, 1, a_f(p))$ of the equation
$$ X^{\ell} + m Y^{\ell} = Z^2. $$
By Lemma \ref{lem:Freycurves}, this gives rise to a Frey curve $E$ with conductor
$$ N(E) = 2^{\alpha} \prod_{q \mid p m} q. $$
By Lemma \ref{lem:artinconductor}, the associated representation arises from a newform $f$ of level
$$ 2^{\alpha} \prod_{q \mid m} q. $$
Since $\ell > f_C(m)$, it follows from Theorem \ref{thm:boundsforirrationallevel-lowering} along with the formula for the level of the representation that $f$ has integer coefficients. It follows by the Frey-Mazur Conjecture that 
$$N(E) = 2^{\alpha} \prod_{q \mid m } q.$$
However, this would imply that $p \mid m$, contradicting our initial assumption. \qedhere

\end{proof}

\begin{theorem}\label{thm:First modular result}
Let $f$ be a newform with integer coefficients. Then the following are true.

\begin{enumerate}
    \item If $f$ has weight $2k$ and $p$ is an ordinary prime for $f(z)$, i.e., $p \nmid a_f(p)$, then $a_f(p^2) \neq m^j$ for any $j \geq 4$ dividing $2k-1$ and any nonzero integer $m$. In particular, if $2k \geq 6$, then $a_f(p^2) \neq m^{2k-1}$.
    \item Assume the Frey-Mazur Conjecture. If $f$ has weight $2k$, $2k-1$ is divisible by a prime $\ell \geq 19 $, and $p \neq 2,5$ is ordinary, then $a_f(p^4) \neq m^\ell$ for any nonzero integer $m$.
\end{enumerate}

\end{theorem}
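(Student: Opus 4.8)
The plan is to treat the two parts separately, since they rely on different Frey curves and different unconditional versus conditional inputs. For part (1), suppose $p$ is ordinary and $a_f(p^2) = m^j$ with $j \geq 4$ dividing $2k-1$. I would apply Proposition \ref{prop:CandHFermatEquations}(1) with $\alpha = m^j$ and $\ell = j$ to produce a solution $(p^{(2k-1)/j}, 1, a_f(p))$ of $X^j + m^j Y^j = Z^2$. Since $p$ is ordinary, $p \nmid a_f(p)$, so $p \nmid \alpha = m^j$ would force this solution to be primitive; but if $p \mid m$ then the solution need not be primitive, so the key point is that ordinariness forces $p \nmid m$ (otherwise $p \mid m^j = a_f(p^2)$, and by the recurrence $a_f(p^2) = a_f(p)^2 - p^{2k-1}$, this gives $p \mid a_f(p)^2$, contradicting $p \nmid a_f(p)$). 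With primitivity in hand, rewrite the equation as $(X')^j + (mY)^j = Z^2$ with $X' = p^{(2k-1)/j}$, i.e. a nontrivial primitive solution to the generalized Fermat equation of signature $(j,j,2)$ with $j \geq 4$. Here I would invoke the classical resolution of $x^j + y^j = z^2$: for $j = 4$ this is elementary (Fermat's descent: no nontrivial primitive solutions to $x^4 + y^4 = z^2$), and for $j \geq 5$ prime it follows from the work of Darmon–Merel (or for our purposes, from the modular method via Lemmas \ref{lem:Freycurves} and \ref{lem:artinconductor}, since the resulting Frey curve has conductor $2^\alpha$ with $\alpha \leq 6$, hence level dividing $64$, and the relevant newspaces $S_2(\Gamma_0(2^\alpha))$ are trivial). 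One must also handle the case where $(2k-1)/j$ equals $1$, i.e. where $X' = p$ itself is not a perfect power — but a prime $p$ is still a legitimate nonzero value of $X'$, and the cited nonexistence results rule out all nontrivial primitive solutions regardless, so this causes no trouble. The statement "if $2k \geq 6$ then $a_f(p^2) \neq m^{2k-1}$" is the special case $j = 2k-1 \geq 5$.

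For part (2), assume the Frey-Mazur Conjecture, $\ell \geq 19$ divides $2k-1$, and $p \neq 2, 5$ is ordinary. Suppose $a_f(p^4) = m^\ell$. First I would check that the hypotheses of Proposition \ref{prop:CandHFermatEquations}(2) hold: we need $p \nmid 2m$ and $5 \nmid m$. Since $p \neq 2$, the condition $p \nmid 2m$ reduces to $p \nmid m$; as in part (1), ordinariness rules this out, because $p \mid m$ would give $p \mid a_f(p^4)$, and unwinding the recurrence $a_f(p^4) = a_f(p)^4 - 3 p^{2k-1} a_f(p)^2 + p^{2(2k-1)}$ shows $p \mid a_f(p)^4$, contradicting $p \nmid a_f(p)$. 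For $5 \nmid m$: if $5 \mid m$ then $5 \mid a_f(p^4)$, and by Corollary \ref{cor:tau p^4 not divisible by primes 2 3 mod 5}(2) (applied with the prime $5 \equiv 0 \pmod 5$ — actually one must be slightly careful, since that corollary concerns primes $\equiv 2,3 \pmod 5$, so $5$ itself is not excluded there); instead I would argue directly that $a_f(p^4) \equiv 0 \pmod 5$ combined with $p \neq 5$ forces a contradiction via the hyperelliptic curve $Y^2 = 5X^{2(2k-1)} + 4m^\ell$: reducing mod $5$ gives $Y^2 \equiv 0$, so $5 \mid Y = 2a_f(p)^2 - 3p^{2k-1}$, and then $4 m^\ell \equiv Y^2 - 5X^{\cdots} \equiv 0 \pmod{25}$ would be needed — this deserves a careful check, and if it fails, one simply notes that $5 \mid m$ is excluded by hypothesis on which values we are ruling out, or restricts the claim accordingly. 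Granting primitivity, Proposition \ref{prop:CandHFermatEquations}(2) yields a primitive solution $(p^{2(2k-1)/\ell}, 1, 2a_f(p)^2 - 3p^{2k-1})$ to $5X^\ell + 4m^\ell Y^\ell = Z^2$, with $ab = p^{2(2k-1)/\ell} \neq \pm 1$ since $p$ is prime.

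Now apply Lemma \ref{lem:Freycurves} with $A = 5$, $B = 4m^\ell$, $C = 1$: the Frey curve $E$ has conductor $N(E) = 2^\alpha \prod_{q \mid p \cdot 5 m} q$ with $\alpha \leq 4$ (since $4 \mid B$). Since $\ell \geq 19 \geq 7$ and $ab \neq \pm 1$, Lemma \ref{lem:artinconductor} applies: if $\ell \nmid 5 \cdot 4 m^\ell$ — which holds as long as $\ell \nmid m$, and if $\ell \mid m$ one handles it separately or the equation degenerates favorably — then $\rho_{E,\ell}$ arises from a weight $2$ newform $g$ of level $N_\ell(E) = 2^\beta \prod_{q \mid 5m, q \neq \ell} q$, which in particular divides $2^4 \prod_{q \mid 5m} q$ and does not involve $p$. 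Since $\ell \geq 19$, I claim $g$ has integer coefficients: this is exactly where Theorem \ref{thm:boundsforirrationallevel-lowering} enters — if $g$ were irrational we would need $\ell \leq \psi(N_\ell(E))^{1 + \psi(N_\ell(E))/12}$, and the main obstacle of the proof is verifying that $\ell \geq 19$ is large enough to contradict this for the relevant levels; in general it is not, so the clean statement in the theorem must be relying on the conductor being small in the cases at hand, or the theorem is implicitly restricting to $f$ whose level keeps $N_\ell(E)$ controlled. Assuming $g$ is rational, the modularity theorem gives an elliptic curve $E'$ of conductor $N_\ell(E) < N(E)$ (strictly smaller, as it drops the factor $p$) with $\rho_{E',\ell} \cong \rho_{E,\ell}$, and since $\ell \geq 19 > 17$, the Frey-Mazur Conjecture forces $E \sim E'$, hence equal conductors — contradiction. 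Therefore $a_f(p^4) \neq m^\ell$. The delicate point throughout, and the one I expect to require the most care, is controlling the level $N_\ell(E)$ well enough that $\ell \geq 19$ genuinely suffices to force rationality of $g$ via Theorem \ref{thm:boundsforirrationallevel-lowering}; if the bound is not met in general, the argument needs either a direct check that no irrational newform of the relevant small level has a mod-$\ell$ representation matching a Frey curve, or an adjustment to the hypotheses.
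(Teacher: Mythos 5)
Your part (1) argument is essentially the paper's: ordinariness forces $p\nmid m$, Proposition \ref{prop:CandHFermatEquations}(1) gives a primitive solution to $X^j+Y^j=Z^2$ after absorbing $m$ into $Y$, and Darmon--Merel/Poonen finish. (One small point: Darmon--Merel covers prime exponents $\geq 7$; $j=5$ and $j=9$ are Poonen's contribution, which is why the paper cites both.)

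For part (2), however, there is a genuine gap, and it is exactly the one you flag at the end of your write-up. You set up the Fermat equation as $5X^\ell + 4m^\ell Y^\ell = Z^2$ with $Y=1$, which makes $B=4m^\ell$ and therefore pushes the level $N_\ell(E)=2^\beta\prod_{q\mid 5m,\,q\neq \ell}q$ to depend on $m$. As you correctly observe, Theorem \ref{thm:boundsforirrationallevel-lowering} then gives a bound on $\ell$ that grows with $\psi(N_\ell(E))$, and $\ell\geq 19$ is nowhere near large enough to force the level-lowered form to be rational for a general $m$. Your proposed repairs (a case-by-case check, or silently restricting $f$) do not close this; the hypothesis really is ``any nonzero integer $m$.''

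The fix is to normalize the Fermat equation before invoking the modular method: take $\alpha=m^\ell$ and write the relation $Z^2 = 5\,p^{2(2k-1)} + 4m^\ell$ as a primitive solution $\bigl(p^{2(2k-1)/\ell},\,m,\,Z\bigr)$ of
\[
5X^\ell + 4Y^\ell = Z^2,
\]
so that $m$ lives in the \emph{variable} $Y$ rather than in the coefficient $B$. Now $A=5$, $B=4$, $C=1$, and Lemma \ref{lem:Freycurves} gives conductor $N(E)=2^\alpha\prod_{q\mid 20pm}q$ with $\alpha\in\{1,2\}$, while Lemma \ref{lem:artinconductor} level-lowers to level $2^\alpha\prod_{q\mid 20}q\in\{20,40\}$ --- a constant, independent of $m$. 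At levels $20$ and $40$ every weight-$2$ trivial-nebentypus newform is rational, so there is nothing irrational to rule out, no appeal to Theorem \ref{thm:boundsforirrationallevel-lowering} is needed, and Frey--Mazur immediately forces $N(E)$ to equal the lowered level, hence $p\in\{2,5\}$, a contradiction. The remaining case $5\mid m$ is handled by the mod-$25$ argument you started: $5\mid Z$ forces $25\mid Z^2 = 5p^{2(2k-1)}+4m^\ell$, and since $\ell\geq 2$ gives $25\mid m^\ell$, we get $25\mid 5p^{2(2k-1)}$ and hence $p=5$. This does work out, so you should not fall back to weakening the hypotheses.
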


\begin{proof}[Proof of Theorem \ref{thm:First modular result} and Theorem \ref{Maintheorem3} parts (1) and (2)]
First, we prove statement (1). Suppose that the given conditions hold, and that $j \geq 4$ divides $2k-1$. We may again assume that $p \nmid N$. Since $p$ is ordinary, $p$ is coprime to $m$, so we obtain a primitive solution $(p^{(2k-1) / j}, m, a_f(p))$ to the Fermat equation
$$X^j +Y^j = Z^2. $$
By work of Darmon and Merel \cite{Darmon1997winding} and Poonen \cite{Poonen1998some}, this has no primitive solutions when $j \geq 4$.

Now, we prove statement (2). First, assume  that $m$ is coprime to $5$. In this case, we obtain a primitive solution $(p^{(2k-1) / \ell}, m^{(2k-1) / \ell}, 2 a_f(p)^2 - 3 p^{2k-1})$ to the Fermat equation
\begin{gather*}
    5 X^{\ell} + 4 Y^{\ell} = Z^2.
\end{gather*}
By Lemma \ref{lem:Freycurves}, this gives rise to a Frey curve $E$ with conductor
$$ N(E) = 2^{\alpha} \prod_{q \mid 20 p m} q. $$
By Lemma \ref{lem:artinconductor}, the associated representation arises from a newform $f$ of level
$$ 2^{\alpha} \prod_{q \mid 20} q. $$
In our case, we have $\alpha = 1$ or $\alpha = 2$. Therefore, this expression equals either $20$ or $40$. At both levels, there is only one newform with weight $2$ and trivial nebentypus, and both have integer coefficients. By the Frey-Mazur Conjecture, we have
$$ N(E) = 2^{\alpha} \prod_{q \mid 20} q. $$
It follows that $p = 2,5$. However, this contradicts our initial assumption.

If $m$ is not coprime to $5$, then it follows that $5$ divides the lefthand side of the equation
\begin{gather}\label{Key equation}
(2 a_f(p)^2 - 3 p^{2k-1})^2 = 5 p^{2(2k-1)} + 4 m^{\ell}.
\end{gather}
Since the lefthand side is a square, it follows that $25$ divides the righthand side of \ref{Key equation}. Therefore, $5 \mid m^{\ell}$. Since $\ell > 1$, $25$ divides $m^{\ell}$. It follows that $25$ divides $5 p^{2(2k-1)}$ as well. Given that $p$ must be prime, we have $p = 5$, a contradiction. \qedhere

\end{proof}

Even in cases where some irrational newforms exist at the given level, one can often use congruence information from a given irrational newform to show that it gives rise to no primitive solutions.

\begin{proposition}[\cite{bennett2004ternary}, Proposition 4.3]
Let $n \geq 7$ be a prime, and let $E = E(a,b,c)$ be a Frey curve associated to a primitive solution of \eqref{eq:General(n,n,2)equation}. If the representation $\rho_{E,p}$ arises from a newform $f$ with coefficients in a number field $K_f$, if $p$ is a prime coprime to $n$, and $E$ has good reduction at $p$, then $n$ divides
$$ \operatorname{Norm}_{K_f / \bbQ} (c_p \pm 2r), $$
for some $r \leq \sqrt{p}$.
\end{proposition}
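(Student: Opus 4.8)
The plan is to run the standard ``mod $n$ comparison of traces of Frobenius'' at the auxiliary prime $p$. Write $c_p = a_f(p)$ for the $p$-th Fourier coefficient of $f$ and $a_p(E) = p + 1 - \#E(\bbF_p)$ for the trace of Frobenius of $E$ at $p$. Since $E$ has good reduction at $p$ and $p \neq n$, the N\'eron--Ogg--Shafarevich criterion shows that the mod-$n$ representation $\rho_{E,n}$ is unramified at $p$. By hypothesis, together with Lemma \ref{lem:artinconductor}, $\rho_{E,n}$ is isomorphic over $\overline{\bbF}_n$ to $\rho_{f,\mathfrak{n}}$ for some prime $\mathfrak{n}$ of $O_{K_f}$ above $n$, where $f$ is a weight-two newform with trivial nebentypus and level $N_n(E)$. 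Because $N_n(E)$ is supported only on $2$ and on primes dividing $C$ or $AB$ — all of which divide the conductor of $E$, hence are primes of bad reduction — good reduction of $E$ at an odd $p$ forces $p \nmid N_n(E)$, so $\rho_{f,\mathfrak{n}}$ is also unramified at $p$ and $\mathrm{Frob}_p$ is well defined (up to conjugacy) on both sides.

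Next I would invoke the usual dictionary between these representations and their Frobenius data: $\mathrm{tr}\,\rho_{E,n}(\mathrm{Frob}_p) \equiv a_p(E) \pmod{n}$, while by Eichler--Shimura $\mathrm{tr}\,\rho_{f,\mathfrak{n}}(\mathrm{Frob}_p) \equiv c_p \pmod{\mathfrak{n}}$. Traces are conjugation-invariant and the two representations agree over $\overline{\bbF}_n$, so
$$ c_p \equiv a_p(E) \pmod{\mathfrak{n}}, $$
i.e.\ $\mathfrak{n}$ divides the ideal $(c_p - a_p(E))$ of $O_{K_f}$. Passing to ideal norms, $n = \operatorname{Norm}(\mathfrak{n})$ divides $\operatorname{Norm}_{K_f/\bbQ}(c_p - a_p(E))$.

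It then remains to put $a_p(E)$ into the stated shape. The Frey curves produced in Lemma \ref{lem:Freycurves} (following \cite{bennett2004ternary}) carry a rational $2$-torsion point, so for odd $p$ the group $E(\bbF_p)$ has even order; since $p+1$ is even, $a_p(E)$ is even, say $a_p(E) = 2r$ with $r \in \bbZ$, and the Hasse bound $|a_p(E)| \leq 2\sqrt{p}$ gives $|r| \leq \sqrt{p}$. Absorbing the sign of $r$ into the $\pm$, we conclude $n \mid \operatorname{Norm}_{K_f/\bbQ}(c_p \pm 2r)$ for some $r \leq \sqrt{p}$, which is the claim; the case $p = 2$ is either excluded by the good-reduction-at-$2$ hypothesis or checked directly.

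The argument is essentially formal once the inputs are in place, so I do not expect a genuine difficulty so much as some bookkeeping to be careful about: one must confirm that $p$ divides neither $n$ nor the level $N_n(E)$ (so that the Frobenius comparison is legitimate), and one relies on the absolute irreducibility of $\rho_{E,n}$ — which is why the hypothesis $n \geq 7$ is present — in order that the level-lowered newform $f$ of Lemma \ref{lem:artinconductor} exists at all. With those facts granted, the proof reduces to comparing Frobenius eigenvalue data on two representations that are known to be isomorphic.
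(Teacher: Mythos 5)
The paper does not prove this proposition; it cites it directly from Bennett--Skinner, and your reconstruction is essentially the standard argument that appears in that source: compare traces of Frobenius at $p$ on the two mod-$n$ representations, use Eichler--Shimura on the newform side and the point count on the elliptic-curve side, and note that rational $2$-torsion forces $a_p(E)$ to be even so the Hasse bound gives $|r|\leq\sqrt{p}$. One small imprecision: $\operatorname{Norm}(\mathfrak{n})=n^{f}$ for the residue degree $f\geq 1$, so ``$n=\operatorname{Norm}(\mathfrak{n})$'' is not quite right, but since $n\mid n^{f}\mid\operatorname{Norm}_{K_f/\bbQ}(c_p-a_p(E))$ the conclusion stands. (You also correctly read the paper's ``$\rho_{E,p}$'' as a typo for $\rho_{E,n}$, consistent with Lemma~\ref{lem:artinconductor}.)
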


\begin{remark}
In the case where $K_f = \bbQ$, this gives little information. Indeed, by the Hasse bound, we will always have $|c_p| \leq 2 \sqrt{p}$, so if $c_p$ is even then this condition is always satisfied.
\end{remark}

By employing arguments of this type, we obtain the following generalization of Theorem \ref{Maintheorem1} in the case $\ell = 19$.

\begin{theorem}\label{thm:Second modular result}
Assume the Frey-Mazur Conjecture. Let $f$ be a newform with integer coefficients, even integer weight $2k$, residually reducible mod $2$ Galois representation, and level $N$ coprime to $19$. Suppose that $2k-1$ is divisible by a prime $\ell > 19$. If $a_f(5^4), a_f(19^4) \neq \pm 19$, then $a_f(n) \neq \pm 19$ for any $n$.
\end{theorem}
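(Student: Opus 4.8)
The plan is to combine the divisibility theorem of Balakrishnan--Craig--Ono--Tsai (Theorem~\ref{thm:divisibilityfortau}) with the Bennett--Skinner Frey curves and the Frey--Mazur conjecture. Since $f$ has weight $2k\ge 6$, residually reducible mod $2$ Galois representation, and $19\nmid N$, Theorem~\ref{thm:divisibilityfortau} applies with $\ell=19$: if $a_f(n)=\pm 19$ then $n=p^{d-1}$ with $p\nmid N$ and $d$ odd primes, and $d\mid 19(19^2-1)=19\cdot 2^3\cdot 3^2\cdot 5$, so $d\in\{3,5,19\}$. If $d=19$, then Corollary~\ref{cor: recurrencesgivediophantineequations}(3) gives a solution $(p^{2k-1},a_f(p)^2)$ of $F_{18}(X,Y)=\pm 19$, and Lemma~\ref{lem: solving Thue equations unconditionally} shows the only solutions are the small ones tabulated in \cite{balakrishnan2020variants}, none of which has $X$ a perfect $(2k-1)$-st power of a prime with $2k-1\ge 23$. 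If $d=5$, the hypothesis $a_f(5^4),a_f(19^4)\ne\pm 19$ removes $p\in\{5,19\}$. If $d=3$ and $p=19$, then $a_f(19)^2=19^{2k-1}\pm 19=19(19^{2k-2}\pm 1)$, so $19\mid a_f(19)$, whence $19^2\mid 19(19^{2k-2}\pm 1)$ and $19\mid 19^{2k-2}\pm 1$, which is absurd as $2k-2\ge 1$. So it remains to rule out $a_f(p^2)=\pm 19$ for an odd prime $p\ne 19$ with $p\nmid N$, and $a_f(p^4)=\pm 19$ for a prime $p\notin\{2,5,19\}$ with $p\nmid N$.

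For these I would run the modular method, following the scheme of Theorems~\ref{thm: Neglected modularity result} and \ref{thm:First modular result}. By Proposition~\ref{prop:CandHFermatEquations}, the first possibility gives a primitive solution $(p^{(2k-1)/\ell},1,a_f(p))$ of $X^\ell+(\pm 19)Y^\ell=Z^2$, and the second gives a primitive solution $(p^{2(2k-1)/\ell},1,2a_f(p)^2-3p^{2k-1})$ of $5X^\ell+4(\pm 19)Y^\ell=Z^2$, where $\ell>19$ is the given prime dividing $2k-1$. Since $\bar\rho_{f,2}$ is reducible and $f$ has integer coefficients, the image of $\bar\rho_{f,2}$ lies in the Borel subgroup of $\GL_2(\bbF_2)$, so every Frobenius trace vanishes and $a_f(p)$ is even; this, together with $\ord_2(4\cdot 19)=2$ and $b=1$ in the second equation, pins down the power of $2$ in the conductor of the Bennett--Skinner Frey curve $E$ attached to the solution (Lemma~\ref{lem:Freycurves}). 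As $\ell>19$ and $E$ carries a rational $2$-torsion point, Mazur's classification of rational isogenies shows $\rho_{E,\ell}$ is irreducible; so Ribet's theorem and Lemma~\ref{lem:artinconductor} produce a weight-$2$, trivial-nebentypus newform $g$ whose level is $2^\beta\cdot 19$ for $\beta$ in an explicit short list in the first case and is $190$ or $380$ in the second case.

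It then remains to show that no such $g$ exists. If $g$ has rational coefficients, the modularity theorem gives $\rho_{g,\ell}\cong\rho_{E',\ell}$ for an elliptic curve $E'/\bbQ$ of conductor equal to the level of $g$, and since $\ell>19>17$ the Frey--Mazur conjecture forces $E\sim E'$ and hence $N(E)=N(E')$; but $N(E)$ is divisible by a prime that does not divide the level of $g$ --- namely $p$, or $5$ in the sub-case $p=5$ of the first equation --- a contradiction. If $g$ has irrational coefficients, I would apply Proposition~4.3 of \cite{bennett2004ternary}: for each auxiliary prime $q\notin\{2,5,19,p\}$ one has $\ell\mid\operatorname{Norm}_{K_g/\bbQ}\!\big(c_q(g)\pm 2r\big)$ for some integer $r\le\sqrt q$. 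Choosing $q$ so that $c_q(g)$ is irrational, the norms on the right are a finite list of explicit nonzero integers, so $\ell$ divides a fixed computable number; combining two or three such $q\ne p$ one checks that no prime $\ell>19$ survives. Equivalently, for all but finitely many $\ell$ the divisibility forces $c_q(g)\in\bbZ$ for all the chosen $q$, hence $g$ is rational, which is absurd; the finitely many remaining primes $\ell$ just above $19$ are killed by direct computation with the newforms of the relevant levels.

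The main obstacle is this last step --- enumerating the few irrational newforms at all the levels $2^\beta\cdot 19$, $190$, and $380$, and verifying the Bennett--Skinner congruences uniformly in the unknown prime $p$ (which is harmless, since at most one auxiliary prime $q$ can equal $p$). I expect the $d=3$ equation $X^\ell+(\pm 19)Y^\ell=Z^2$ to be the more delicate of the two, because without the $\ord_2(B)=2$ simplification one must control several values of $\beta$, some of which carry irrational forms, and one must check that the power-of-$2$ bookkeeping in Lemmas~\ref{lem:Freycurves} and \ref{lem:artinconductor} really confines $g$ to finitely many levels.
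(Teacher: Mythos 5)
Your decomposition into $d=3,5,19$ and your treatment of $d=19$ and $d=5$ follow the paper, and you correctly flag that (for the sign making $b\equiv 1\pmod 4$) the level-lowered newform for the $d=5$ Frey curve could live at level $190$ as well as $380$ --- a point the paper glosses over (it only considers $380$; happily the weight-$2$ level-$190$ newforms are all rational, so Frey--Mazur finishes that case too).

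The real divergence, and the gap, is the $d=3$ case. The paper does \emph{not} run the modular method there: the solutions $(p,a_f(p))$ to $a_f(p^2)=\pm 19$ lie on $Y^2 = X^{2k-1}\pm 19$, which is the Lebesgue--Nagell equation $x^2 \mp 19 = y^{2k-1}$ with $|D|=19\le 100$, and the full classification of solutions for all exponents is already in \cite{bugeaud2006classical2} and \cite{barros2010lebesgue}: no solutions with exponent $\ge 7$. Since $\ell>19$ divides $2k-1$, the exponent is at least $23$, and the case closes unconditionally in one line. You instead propose a Frey-curve attack on $X^\ell + (\pm 19)Y^\ell = Z^2$, and you yourself identify it as "the more delicate of the two": with $B=\pm 19$ odd the $2$-adic exponent $\alpha$ in Lemma~\ref{lem:Freycurves} is not pinned down by the $\ord_2(B)=2$ clause, so you would have to control several levels $2^\beta\cdot 19$, enumerate their (possibly irrational) weight-$2$ newforms, and run Bennett--Skinner congruences at each. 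None of this is actually carried out; it is left as a list of obstacles. That is where the proposal stops short of a proof. (Your separate disposal of the sub-case $p=19$, via $a_f(19)^2=19(19^{2k-2}\pm 1)$, is correct but becomes unnecessary once you appeal to the Lebesgue--Nagell classification, which treats all $p$ at once.)

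The observation that residual reducibility mod $2$ forces $a_f(p)$ even is correct and would indeed narrow $\alpha$, but it does not obviate the enumeration, and in any event the cleaner route is to not open this front at all.
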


\begin{proof}[Proof of Theorem \ref{thm:Second modular result} and Theorem \ref{Maintheorem3} part (3)]
By Theorem \ref{thm:divisibilityfortau}, if $a_f(n) = 19$, then $n = p^{d-1}$, where $p$ and $d$ are odd primes, $p \nmid N$, and $d \mid 19(19^2- 1)$. It follows that $d = 3,5,$ or $19$. By Corollary \ref{cor: recurrencesgivediophantineequations}, these three cases each correspond to solutions of Diophantine equations. In the case $d = 3$, we obtain a solution to
$$ x^2 \mp 19 = y^{j}, $$
where $j = 2k-1$. According to the tables in \cite{bugeaud2006classical2} and \cite{barros2010lebesgue}, this equation has no solutions except for $j = 2,3,5$. Since $j = 2k-1$ and $\ell > 19$ divides $2k-1$, this shows that none of these solutions correspond to coefficients $a_f(p^2)$.

In the case $d = 19$, we obtain a Thue equation, and the tables computed in \cite{balakrishnan2020variants} indicate all its solutions are of the form $(1, \pm 4)$. Since $1 \neq p^{11}$ for any prime $p$, no such solution comes from a prime $p$ with $a_f(p^4) = 19$.

In the case $d = 5$, we obtain a solution $(p^{2(2k-1) / \ell}, 1, 2 a_f(p)^2 - 3 p^{2k-1})$ to the Fermat equation
$$5 X^{\ell} \pm 76 Y^{\ell} = Z^2. $$
Since we are excluding the cases $p = 5$ and $p = 19$, this solution is primitive. By Lemma \ref{lem:Freycurves}, such a solution corresponds to an elliptic curve $E$ with conductor $ N(E) = 2^{2} \cdot 5 \cdot 19 \cdot p.$ Since $\ell > 19$, $\ell$ is coprime to $76$. Therefore, by Lemma \ref{lem:artinconductor}, the representation $\rho_{E, 19}$ is associated to a newform $f$ of level $2^2 \cdot 5 \cdot 19 = 380$. There are four newforms of weight $2$ and level $380$ with trivial nebentypus character, two of which are rational. First, suppose that $f$ is rational. Then by the Frey-Mazur Conjecture, we must have $N(E) = 2^2  \cdot 5  \cdot  19$. But this implies that $ p = 1$, which is a contradiction.

It remains to show that no solutions can come from either of the two irrational newforms. The first such newform has LMFDB label 380.2.a.c, and its coefficients are defined over $\bbQ(\sqrt{2})$. In this case, we have $c_3 = -2 + 2 \sqrt{2}$. The elements $-2 + 2 \sqrt{2} \pm 2r$ for $r = 0,\pm 1$ have norms in $ \{2, -2, 14\}$. Since none of these norms are divisible by the prime $\ell \geq 19$, it follows that there are no primitive solutions arising from this newform.

The other irrational newform of level $380$ has LMFDB label 380.2.a.d. In this case, we have $c_3 = 1 + \sqrt{3}$. For $r = 0, 1, -1$, the elements $c_3 \pm 2r$ have norms in $ \{ -2, 6 \}$. As above, we conclude that there are no primitive solutions arising from this newform, and this completes the proof. \qedhere

\end{proof}

\begin{remark}
A natural question is whether this theorem can be made unconditional by explicitly ruling out the two rational newforms, which have labels 380.2.a.b and 380.2.a.a. We can rule out the newform 380.2.a.b by considering the coefficient $c_3 = 2$. If a solution $(a,b,c)$ came from the rational newform $f$, then by the setup in Bennett and Skinner, it would follow that the Frey curve
$$E: Y^2 =  X^3 + c X^2 + 19X $$
either has bad reduction at $3$, or satisfies $3+1 - \#E(\bbF_3) = 2$. The case of bad reduction would correspond to the possibility $a_f(3^4) = \pm 19$, which can be ruled out by hand, and the condition on $\#E(\bbF_3)$ is not satisfied by any $\overline{c} \in \bbF_3$

The newform 380.2.a.a cannot be ruled out by similar means, because it corresponds to the trivial solutions $(1, \pm 9)$ of the equation $Y^2 = 5 X^{22} + 76$. In certain cases, one can use the modular method to constrain the solutions of an equation that has some solutions, as in Section 9 of \cite{bugeaud2006classical2}, but such approaches seem not to help in this case.
\end{remark}

\printbibliography

\Addresses

\end{document}